\pdfoutput=1
\documentclass[11pt,a4paper]{article}

\usepackage[T1]{fontenc}
\usepackage[utf8]{inputenc}
\usepackage{amsmath,amssymb,mathtools}
\usepackage{amsthm}
\usepackage{txfonts}
\usepackage{array}
\usepackage[margin=2.4cm]{geometry}
\usepackage{tikz}
\usetikzlibrary{cd}
\usetikzlibrary{decorations.pathmorphing}
\usepackage{xcolor}
\usepackage[colorlinks=true,linkcolor=blue!50!black,citecolor=blue!50!black,urlcolor=blue!50!black]{hyperref}
\hypersetup{pdftitle={Statistical models as natural transformations: meaningfulness, coherence and priors as states in Markov categories},
  pdfauthor={Francesco Vaccarino}}

\theoremstyle{plain}
\newtheorem{theorem}{Theorem}[section]
\newtheorem{proposition}[theorem]{Proposition}
\newtheorem{lemma}[theorem]{Lemma}
\newtheorem{corollary}[theorem]{Corollary}

\theoremstyle{definition}
\newtheorem{definition}[theorem]{Definition}
\newtheorem{example}[theorem]{Example}

\theoremstyle{remark}
\newtheorem{remark}[theorem]{Remark}
\newtheorem{caveat}[theorem]{Remark}

\newcommand{\cat}[1]{\mathsf{#1}}
\newcommand{\catU}{\cat{U}}
\newcommand{\catV}{\cat{V}}
\newcommand{\catO}{\cat{\Omega}}
\newcommand{\catS}{\cat{S}}
\newcommand{\catD}{\cat{D}}
\newcommand{\catK}{\cat{K}}
\newcommand{\Meas}{\cat{Meas}}
\newcommand{\Set}{\cat{Set}}
\newcommand{\Stoch}{\cat{Stoch}}
\newcommand{\FinStoch}{\cat{FinStoch}}
\newcommand{\FinSet}{\cat{FinSet}}
\newcommand{\Alg}{\cat{Alg}}
\newcommand{\G}{\mathcal{G}}
\newcommand{\Prb}{\mathcal{P}}
\newcommand{\op}{\mathrm{op}}
\newcommand{\id}{\mathrm{id}}
\newcommand{\pr}{\mathrm{pr}}
\newcommand{\del}{\delta}
\newcommand{\R}{\mathbb{R}}

\newcommand{\Vect}{\cat{Vect}}
\newcommand{\Norm}{\mathcal{N}}
\newcommand{\Bern}{\mathrm{Bern}}
\newcommand{\expit}{\mathrm{expit}}
\newcommand{\res}{\mathrm{res}}
\DeclareMathOperator{\ob}{ob}
\DeclareMathOperator{\Nat}{Nat}
\DeclareMathOperator{\Hom}{Hom}
\newcommand{\StatMod}{\cat{StatMod}}
\newcommand{\Statk}{\StatMod_{\Meas}}
\newcommand{\Jc}{\Delta}
\newcommand{\Jcm}{\Delta_{\Meas}}
\newcommand{\Arrdet}[1]{#1^{\rightarrow}_{\det}}
\newcommand{\Arrcop}[1]{#1^{\rightarrow}_{\det,\,\mathrm{cop}}}
\newcommand{\BorelStoch}{\cat{BorelStoch}}

\title{Statistical models as natural transformations:\\
meaningfulness, coherence and priors as states in Markov categories}
\author{Francesco Vaccarino\footnote{francesco.vaccarino@polito.it}\\
\small Dipartimento di Scienze Matematiche ``G.\ L.\ Lagrange'', Politecnico di Torino}
\date{September 2026}

\begin{document}
\maketitle

\begin{abstract}
\noindent
We show that a statistical model in the sense of McCullagh, in the form given
by Br\o ns, is a natural transformation between two functors from the category
of designs to the Kleisli category $\Stoch$ of the Giry monad, provided that
its components are measurable in the parameter. The condition is empty for
finite models. A design-indexed quantity is
a family of morphisms of $\Stoch$ defined on the parameter objects, called
meaningful if it is natural. We prove that Tjur's criterion, imposed on
parameter functions indexed by finite samples with multiplicities, forces the
indexing by the support and then coincides with naturality over the
insertions. For finite designs we show that a quantity
can be corrected to a natural one within a given class of corrections if and
only if a class vanishes in the first cohomology group of a Baues--Wirsching
complex relative to that class, while its image in the absolute group is
always zero. In the one-way layout the marginal dispersion is not meaningful,
and the within-group dispersion is its unique correction which leaves the
merged design unchanged. A prior is a family of states on the parameter
objects, called coherent over a class of design morphisms if it is natural
over that class. We show that coherence at a merge confines the prior
to the image of the corresponding parameter map, that coherence over the
insertions is Kolmogorov consistency, and that coherence over the injections
adds the exchangeability assumed by the categorical de~Finetti theorem. In the
finite one-way scheme the coherent priors form polytopes of known dimension. The analogue of Jeffreys' general rule is not coherent, while the
analogue of his rule for location--scale families is. Finally we show that
ridge regression is the Bayesian inversion of the Gaussian linear model with
respect to a Gaussian prior, which is coherent over the insertions and never
over the injections.
\end{abstract}

\medskip
\noindent\emph{MSC 2020:} 62A01 (primary); 62F15, 18M05, 60A05, 18G90 (secondary).\\
\emph{Keywords:} statistical model; Markov category; Giry monad; natural
transformation; meaningfulness; design category; Baues--Wirsching cohomology;
Bayesian inversion; coherence; exchangeability; Jeffreys prior; ridge
regression.

\setcounter{tocdepth}{2}
\tableofcontents

\section{Introduction}\label{sec:intro}

Let $S$ be a measurable space and let $\Prb(S)$ be the set of probability
measures on $S$. A statistical model is usually defined as a map
$\Theta_0 \to \Prb(S)$, where $\Theta_0$ is a set, i.e.\ as a parametrised
family of probability measures on a fixed sample space. McCullagh observed
in~\cite{mccullagh_what_2002} that this definition does not say how the family
must transform when the experimental design changes. Therefore it cannot state
the requirement that a parameter retain its meaning across designs. His remedy
was categorical. Designs form a category, sample spaces and parameter spaces
are functors on it, and a model is a natural transformation.

In his discussion of~\cite{mccullagh_what_2002},
Br\o ns~\cite[pp.~1280--1282]{brons_discussion_2002} organised these data into
a hexagonal diagram of categories and functors. He observed that the resulting
notion of model is equivalent, by a general argument on comma categories, to a
diagram in a fixed ``category of statistical models'' (see
Proposition~\ref{prop:brons}). He also noted that this category is too small,
since to include all statistical transformations ``Markov kernels must be
allowed''~\cite[p.~1282]{brons_discussion_2002}. This remark was the starting
point of the present work.

In the same volume, Tjur~\cite{tjur_discussion_2002} considered the rules which
assign a parameter, i.e.\ a function of the unknown distribution, to each
finite sample of covariate values, repetitions allowed. He called such a rule
a \emph{parameter function}, and he called it \emph{meaningful} when it is
invariant under the formation of marginal models, i.e.\ when reducing the
design by removing some of the sampled $x$'s leaves the associated parameter
unchanged. His non-example is the design-averaged expectation
$\alpha + \beta\bar x$. The criterion of
McCullagh~\cite[\S4.5]{mccullagh_what_2002} is that a subparameter be a natural
transformation of functors on the design category, and in his rejoinder he
reads Tjur's remarks as explaining what is meant by a natural parameter in
regression~\cite[p.~1304]{mccullagh_what_2002}. The two criteria are stated
over different classes of morphisms, namely design reduction alone for Tjur
and the full design category for McCullagh, and neither source asserts that
they coincide.

In~\cite{mccullagh_what_2002} the codomain of the construction is the category
$\Set$ of sets, where conditioning, sufficiency and almost sure equality cannot
be expressed. 
Markov
categories~\cite{fritz_synthetic_2020,cho_disintegration_2019,perrone_markov_2024,fritz_definetti_2021}
provide a categorical probability theory in which they can. The aim of this
paper is to replace $\Set$ with the Markov category $\Stoch$, i.e.\ the Kleisli
category of the Giry monad, whose objects are the measurable spaces and whose
morphisms are the Markov kernels, and to study the resulting notion of model.

Let $\catD$ be the category of designs and let $\catV$ be the category of
response scales (see Definition~\ref{def:designdata}). A statistical model in
the sense of McCullagh and Br\o ns is a natural transformation
$P \colon L \Rightarrow R$ between two functors
$L, R \colon \catV \times \catD^{\op} \to \Set$, where $L$ gives the parameter
sets and $R$ gives the sets of the probability measures on the sample spaces
(see Definition~\ref{def:mcbmodel}). We denote by
$\del \colon \Meas \to \Stoch$ the functor which is the identity on the objects
and maps a measurable map $f$ to the kernel $x \mapsto \delta_{f(x)}$. Suppose
that the parameter sets are endowed with measurable structures such that the
reparametrisation maps are measurable. Composing with $\del$ we obtain two
functors $\mathbf{L}, \mathbf{R} \colon \catV \times \catD^{\op} \to \Stoch$.
We shall prove the following.

\begin{theorem}[Bridge]\label{thm:main}
There is a canonical bijection between the statistical models
$P \colon L \Rightarrow R$ whose components are measurable in the parameter
and the natural transformations
$\mathbf{P} \colon \mathbf{L} \Rightarrow \mathbf{R}$. The morphisms
$\mathbf{L}(m)$ and $\mathbf{R}(m)$ are deterministic for every morphism $m$ of
$\catV \times \catD^{\op}$. If the design data are finite, then every
statistical model has measurable components, and $\Stoch$ can be replaced by
its full subcategory $\FinStoch$ of the finite discrete spaces, whose
morphisms are the stochastic matrices.
\end{theorem}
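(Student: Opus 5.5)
The plan is to make the bijection explicit: it sends a family of maps to the family of kernels obtained by reading each map as a kernel. For an object $a=(V,D)$ of $\catV\times\catD^{\op}$ write $\Theta_a=L(a)$, with its given measurable structure, and $S_a$ for the sample space, so that $R(a)=\Prb(S_a)$.

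I would first fix the functors. Set $\mathbf{L}=\del\circ L$, regarded as a functor with values in $\Meas$. For $\mathbf{R}$, the natural choice on objects is $\mathbf{R}(a)=S_a$: a component of $\mathbf{P}$ is then a kernel $\Theta_a\to S_a$, i.e. a measurable map $\Theta_a\to\Prb(S_a)$. On morphisms, $R(m)$ is pushforward along a measurable map $s_m$ between sample spaces, and I set $\mathbf{R}(m)=\del(s_m)$. Both $\mathbf{L}(m)$ and $\mathbf{R}(m)$ are images under $\del$, hence deterministic, which settles the second claim. Functoriality follows because $\del$ is a functor and $L$ and $s$ are functorial.

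The correspondence itself is as follows. Given $P$ with measurable components $P_a\colon\Theta_a\to\Prb(S_a)$, let $\mathbf{P}_a$ be the kernel $\theta\mapsto P_a(\theta)$. Conversely, a kernel $\Theta_a\to S_a$ is exactly a measurable map $\Theta_a\to\Prb(S_a)$, where $\Prb(S_a)$ carries the Giry $\sigma$-algebra. This makes the assignment a bijection on families. It remains to show that naturality corresponds, which I would do by computing Kleisli composition with deterministic kernels. For $m\colon a\to b$ and $\theta\in\Theta_a$,
\[
(\mathbf{R}(m)\circ\mathbf{P}_a)(\theta)=(s_m)_*P_a(\theta),\qquad
(\mathbf{P}_b\circ\mathbf{L}(m))(\theta)=P_b(L(m)\theta).
\]
These are exactly the two sides of the naturality square for $P$ in $\Set$. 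So naturality of $\mathbf{P}$ is equivalent to naturality of $P$: kernels are equal exactly when they agree pointwise as measure-valued maps.

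For the finite case, assume that all $\Theta_a$ and $S_a$ are finite and carry the discrete $\sigma$-algebra. Then every map out of $\Theta_a$ is measurable, so the measurability hypothesis holds automatically. Every object involved lies in $\FinStoch$, and $\FinStoch$ is a full subcategory, so all components and all $\mathbf{L}(m),\mathbf{R}(m)$ are morphisms there. Hence the whole bijection restricts to $\FinStoch$.

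I expect the main obstacle to be the precise form of $R$ in Definition~\ref{def:mcbmodel}. If $R(a)$ is literally the set $\Prb(S_a)$ rather than something induced from a sample-space functor, I must check that $R(m)$ is indeed pushforward along a measurable $s_m$; otherwise $\mathbf{R}$ cannot be obtained as $\del$ applied to a functor into $\Meas$. I would settle this by extracting from Definition~\ref{def:designdata} the sample-space functor into $\Meas$ and verifying that $R=\Prb\circ(\text{that functor})$. The canonicity claim should then follow because the construction uses only $\del$ and the identification of kernels with measurable maps into $\Prb$.
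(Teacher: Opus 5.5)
Your proposal is correct and is essentially the paper's proof. The paper also works by transposition under $\Stoch(X,Y)\cong\Meas(X,\G Y)$. It uses the same two computations: postcomposing with $\del(g)$ is pushforward along $g$, and precomposing with $\del(h)$ is precomposition with $h$. Hence the naturality squares of $P$ and of $\mathbf{P}$ correspond, and the legs are deterministic because they are Dirac kernels. The only difference in organisation is that the paper treats the finite case first and then observes that nothing in the argument used finiteness.

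The obstacle you anticipate does not arise. By definition $R=\Prb\circ\Gamma\circ(\id_{\catV}\times\pr_{\catU}^{\op})$, so $s_m=\Gamma(\alpha,\varphi)$ is measurable. One correction to your notation: $\mathbf{L}$ should be $\del\circ\Theta_{\Meas}\circ\pr_{\catO}^{\op}\circ\pr_2$, that is, $\del$ applied to the measurable lift of $L$ (assumed in the theorem, and automatic with discrete $\sigma$-algebras in the finite case), not to the $\Set$-valued $L$ itself.
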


\begin{remark}\label{rem:main}
Theorem~\ref{thm:main} is proved in Section~\ref{sec:bridge}, see
Theorem~\ref{thm:bridge-finite} and Theorem~\ref{thm:bridge-meas}. The proof
is a transposition under the bijection
$\Stoch(X,Y) \cong \Meas(X,\G Y)$. The measurability of the components is a
restriction with respect to~\cite{mccullagh_what_2002}, where no measurability
in the parameter is imposed. The role of Theorem~\ref{thm:main} is to place
models, quantities and priors in the same category, where the results below
are stated. For the statistician this means that marginalisation,
conditioning, sufficiency and Bayesian inversion become operations of the same
category in which the model is a morphism.
\end{remark}

We call $\mathbf{P}$ the \emph{bridged model} associated to $P$. Our main
results concern two kinds of families of morphisms of $\Stoch$ indexed by the
covariate spaces, which are the objects of a category $\catO$.

Let $\Theta_\Meas(\omega)$ be the parameter object at the covariate space
$\omega$ and let $\Lambda \colon \catO^{\op} \to \Meas$ be a functor. A
\emph{design-indexed quantity} is a family of morphisms
$g_{\omega} \colon \Theta_\Meas(\omega) \rightsquigarrow \Lambda(\omega)$ of
$\Stoch$. It is called \emph{coherent} over a subcategory $\cat{Q}$ of $\catO$
if it is natural over $\cat{Q}$ (Definition~\ref{def:coherence}),
\emph{meaningful} if it is coherent over $\catO$, and \emph{Tjur-natural} if it
is coherent over the subcategory $\catO_{\mathrm{red}}$ generated by the proper
inclusions of covariate spaces, which we call \emph{insertions} after Tjur.
The parameter functions of Tjur are indexed by the finite samples of covariate
values, with multiplicities, and not by the covariate spaces. Accordingly, a
\emph{sample-indexed quantity} is a family of morphisms
$G_{d} \colon \Theta_\Meas(\omega) \rightsquigarrow \Lambda(\omega)$ indexed
by the designs $d = (u,\omega,x)$ with $x$ surjective. A \emph{Tjur reduction}
of $d$ is the morphism of designs given by the removal of some units together
with the covariate values which are no longer observed, and $G$ is called
\emph{Tjur-meaningful} if~\eqref{eq:sampletjur} holds at every Tjur reduction
(Definition~\ref{def:sampleindexed}).

\begin{theorem}[Tjur's criterion]\label{thm:main-tjur}
Let $\catO = \catO_{\mathrm{lab}}$ be the category of the finite non-empty
sets and let $\catU = \FinSet_{\mathrm{inj}}$ be the category of the finite
sets and injective maps, as in the one-way layout, with an arbitrary parameter
functor. A sample-indexed quantity $G$ is
Tjur-meaningful if and only if
$G_{(u,\omega,x)} = g_{\omega}$ depends only on the support $\omega$ of the
sample and the design-indexed quantity $g$ is Tjur-natural. In this case
$g_{\omega} = G_{(\omega,\omega,\id_{\omega})}$. Every meaningful quantity is
Tjur-natural, and the converse does not hold in general.
\end{theorem}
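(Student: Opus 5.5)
The plan is to prove the two directions of the equivalence separately and then the final sentence by a counterexample. First I fix the definitions as they will be read off Definition~\ref{def:sampleindexed} and Definition~\ref{def:coherence}. A design $d=(u,\omega,x)$ has $x\colon u\to\omega$ surjective. A Tjur reduction $d'\to d$ is given by an injection $u'\hookrightarrow u$ together with the inclusion $\iota\colon\omega'=x(u')\hookrightarrow\omega$ of the observed support. Condition~\eqref{eq:sampletjur} at such a reduction reads
\[
G_{d'}\circ\Theta_\Meas(\iota)\;=\;\Lambda(\iota)\circ G_{d}.
\]
Here $\Theta_\Meas(\iota)$ and $\Lambda(\iota)$ are the (deterministic) restriction maps. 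Since $\omega\subseteq\omega'$ is impossible unless they are equal, the key observation is that reductions which do not shrink the support have $\iota=\id_\omega$. For these, the condition collapses to $G_{d'}=G_d$.

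\emph{Step 1 (support indexing).} Let $d=(u,\omega,x)$ be any design. Choose a section $s\colon\omega\to u$ of the surjection $x$. Removing the units outside $s(\omega)$ is a Tjur reduction from $d$ to a design isomorphic to $(\omega,\omega,\id_\omega)$, with support unchanged. Hence $G_d=G_{(\omega,\omega,\id_\omega)}=:g_\omega$. One point must be checked: relabelling units by a bijection is itself a morphism of $\catU=\FinSet_{\mathrm{inj}}$ fixing $\omega$, so $G$ is invariant under it. I would do this by composing two unit-removals through a common larger design, or by reading the definition so that it applies directly to injections $u'\hookrightarrow u$.

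\emph{Step 2 (naturality and converse).} Given $G_d=g_\omega$, let $\iota\colon\omega'\hookrightarrow\omega$ be a proper inclusion. The reduction $(\omega,\omega,\id)\to(\omega',\omega',\id)$ is a Tjur reduction, so~\eqref{eq:sampletjur} gives exactly the naturality square of $g$ at $\iota$. Every morphism of $\catO_{\mathrm{red}}$ is a composite of such inclusions and identities, so $g$ is Tjur-natural. Conversely, if $G_d=g_\omega$ with $g$ Tjur-natural, then every Tjur reduction yields the naturality square at its support inclusion $\iota$. That square is an identity when $\iota=\id$ and holds by Tjur-naturality otherwise. Then $g_\omega=G_{(\omega,\omega,\id_\omega)}$ holds by construction.

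\emph{Step 3 (meaningful versus Tjur-natural).} Since $\catO_{\mathrm{red}}\subseteq\catO_{\mathrm{lab}}$, coherence over $\catO$ implies coherence over the insertions. For strictness I would use the one-way layout, where $\Theta(\omega)=\R^\omega$ (group means), with $\Lambda(\omega)=\R^\omega$ and the quantity $g_\omega(\mu)=(\mu_i-\mu_{i_0(\omega)})_i$ for a chosen total order on labels. This is not the right example because it is not natural under insertions, so I would instead take $\Lambda=\R^\omega$ and
\[
g_\omega(\mu)_i=\mu_i+c\,\bigl(\lvert\text{fibre}\rvert-1\bigr),
\]
which is identically $\mu$ for injections but breaks naturality along a non-injective map $\omega\to\omega''$ merging two groups. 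The main obstacle is producing this example cleanly. It needs a quantity that commutes with restriction along every insertion but not with a merge. A simpler choice would be $\Lambda$ constant $=\R$ and $g_\omega(\mu)=\max_i\mu_i$, but this fails along insertions, so I would look for something like $\Lambda(\omega)=\R^\omega$ with $g_\omega=\id$ twisted by a non-natural map defined only on non-injective morphisms. The cleanest candidate is to let the parameter functor encode variances, so that a merge acts non-trivially while insertions act by restriction. I expect to settle on the marginal-versus-within-group dispersion phenomenon announced in the abstract, checking explicitly that it commutes with insertions and fails at a merge.
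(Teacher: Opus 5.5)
Your Steps~1 and~2 are the paper's argument for Proposition~\ref{prop:descent}. They are sound once the point you flag is settled, and it must be settled through the common larger design, as you suggest.

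\textbf{Step~1.} Let $d=(u,\omega,x)$ and $d'=(u',\omega,x')$ have the same support. Both coproduct injections into $(u\sqcup u',\omega,[x,x'])$ are Tjur reductions with identity covariate leg, so $G_d=G_{d'}$. This step is not optional. The section $s$ is a proper subsample only when $|u|>|\omega|$, so it does not reach designs whose design map is bijective. Nor does invariance under relabelling of units follow from such bijections being morphisms of $\catU$, because \eqref{eq:sampletjur} is imposed only at proper subsample inclusions. Once you use the coproduct argument, the section is superfluous.

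\textbf{The gap is Step~3: you never produce a quantity that is Tjur-natural but not meaningful.}
\begin{itemize}
\item The fibre-dependent candidate is not a design-indexed quantity. The component $g_\omega$ is attached to the object $\omega$ and cannot depend on a morphism out of it.
\item The example you plan to settle on goes the wrong way. The marginal dispersion is natural at the merge, because the between-group term vanishes on the diagonal $\Theta(p)$. It fails at the insertion $i_1$, i.e.\ at Tjur's reduction to one group (Example~\ref{ex:oneway-findings}(i)--(ii), Proposition~\ref{prop:oneway-cont-recalib}). So it is not Tjur-natural and cannot separate the two conditions.
\item No constant-target example exists in the one-way layout at all. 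Naturality along $\{a\}\subseteq\omega$ gives $g_\omega(\mu,s)=g_{\{a\}}(\mu_a,s)$ for every $a\in\omega$. Comparing two labels inside $\{a,b\}$ then forces $g_\omega(\mu,s)=h(s)$ for all $\omega$, and such a $g$ is meaningful.
\end{itemize}

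The missing idea is to let the target vary with $\omega$ and to take a quantity that does not depend on $\theta$ at all. The paper (Example~\ref{ex:tjurgap}) takes $\Lambda(\omega)=\R^{\omega\times\omega}$, $\Lambda(\psi)=(\psi\times\psi)^{*}$ and $g_\omega(\theta)_{(a,b)}=[a\neq b]$. The square~\eqref{eq:tjursquare} at $\psi$ then reads $[a\neq b]=[\psi a\neq\psi b]$, which holds if and only if $\psi$ is injective. Hence $g$ is natural along every injection, in particular Tjur-natural, but not along the merge $\{1,2\}\to\{*\}$. Since $g$ is constant in $\theta$, this works for every parameter functor, as the statement's ``arbitrary parameter functor'' suggests.

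Your offset idea can be repaired in the same spirit by making the offset depend on the label instead of a fibre. Take $g_\omega(\theta)=c|_{\omega}\in\R^{\omega}$ with $\Lambda(\psi)=\psi^{*}$, for a fixed non-constant function $c$ of the labels. This is Tjur-natural and fails at a relabelling.
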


\begin{remark}\label{rem:main-tjur}
Theorem~\ref{thm:main-tjur} is proved in Section~\ref{sec:meaningful}, see
Proposition~\ref{prop:descent} for the general hypotheses. Thus Tjur's
criterion first forces the indexing by the support and then coincides with
naturality over the insertions, which is weaker than the criterion of
McCullagh. The non-examples of Tjur are sample-indexed and not
support-indexed. The quantity of Example~\ref{ex:tjurgap} is natural at a
morphism $\psi$ if and only if $\psi$ is injective. The regression examples,
whose covariate spaces are vector spaces, are not covered by
Theorem~\ref{thm:main-tjur}.
\end{remark}

Suppose now that $\catO$ is a finite category and that the sets
$\Theta(\omega)$ and $\Lambda(\omega)$ are finite. Let $k \supseteq \mathbb{Q}$
be a field and let $A(\omega) = k^{\Theta(\omega)}$ and
$B(\omega) = k^{\Lambda(\omega)}$ be the algebras of functions. Let
$C^{\bullet}$ be the complex of Baues and
Wirsching~\cite{baues_cohomology_1985} of $\catO$ with coefficients in
$\Hom_k(B,A)$, so that $H^{0} = \Nat(B,A)$. A deterministic quantity $g$ gives
the $0$-cochain $g^{*} = (g_{\omega}^{*} \colon B(\omega) \to A(\omega))$, and
$g$ is meaningful if and only if its \emph{defect} $c_g = d^{0}g^{*}$ is zero.
For a linear subspace $\mathcal{C} \subseteq C^{0}$ we denote by
$C^{\bullet}_{\mathcal{C}}$ the subcomplex of $C^{\bullet}$ with
$C^{0}_{\mathcal{C}} = \mathcal{C}$ and $C^{n}_{\mathcal{C}} = C^{n}$ for
$n \ge 1$.

\begin{theorem}[Obstruction]\label{thm:main-obstruction}
There is $h \in \mathcal{C}$ such that $g^{*} + h$ is natural if and only if
the class of $c_g$ in $H^{1}(C^{\bullet}_{\mathcal{C}})$ is zero, and $h$ is
unique modulo $\mathcal{C} \cap \Nat(B,A)$. The image of this class in
$H^{1}(\catO;\Hom_k(B,A))$ is zero for every $g$. In the one-way layout with
two groups and finitely many values of the mean and of the dispersion, let
$\mathcal{C}_{1}$ be the subspace of the $0$-cochains which vanish at the
merged design. Then $\mathcal{C}_{1} \cap \Nat(B,A) = 0$, and the marginal
dispersion has a unique correction in $\mathcal{C}_{1}$, which gives the
within-group dispersion.
\end{theorem}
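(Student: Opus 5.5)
The statement has three parts: the relative obstruction criterion, the vanishing of the absolute class, and the one-way computation. I would treat them in that order.

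For the first part I work directly in the subcomplex $C^{\bullet}_{\mathcal{C}}$. Since $C^{1}_{\mathcal{C}} = C^{1}$, the defect $c_g = d^{0}g^{*}$ lies in $C^{1}_{\mathcal{C}}$. It is a cocycle because $d^{1}d^{0} = 0$ in the Baues--Wirsching complex. The $1$-coboundaries of $C^{\bullet}_{\mathcal{C}}$ are exactly $d^{0}(\mathcal{C})$. Now $g^{*}+h$ is natural if and only if $d^{0}(g^{*}+h) = 0$, that is, $c_g = d^{0}(-h)$. Such an $h \in \mathcal{C}$ exists if and only if $c_g \in d^{0}(\mathcal{C})$, i.e.\ the class of $c_g$ in $H^{1}(C^{\bullet}_{\mathcal{C}})$ vanishes. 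For uniqueness, two corrections $h, h'$ satisfy $d^{0}(h-h') = 0$. So $h-h' \in \mathcal{C} \cap \ker d^{0} = \mathcal{C} \cap \Nat(B,A)$, using $H^{0} = \ker d^{0} = \Nat(B,A)$.

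For the second part, the image in $H^{1}(\catO;\Hom_k(B,A))$ is induced by the inclusion of complexes. There $c_g = d^{0}g^{*}$ is a coboundary by construction, since $g^{*} \in C^{0}$. Hence its class is zero. The only point to check is that the paper's sign and variance conventions for $d^{0}$ make the defect exactly $d^{0}g^{*}$ and not its negative. The sign is irrelevant for vanishing in any case.

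The third part is the concrete computation and the main obstacle. I would set up the one-way layout with two groups as follows.
\begin{itemize}
\item The objects of $\catO$ are $\{1\}$, $\{2\}$, $\{1,2\}$ and the merged design $\{*\}$.
\item The morphisms are the insertions and the merge $\{1,2\}\to\{*\}$.
\item $\Theta(\omega)$ is the finite set of tuples of group means and a common dispersion.
\item The parameter map at the merge sends $(\mu_1,\mu_2,\sigma^2)$ to the marginal pair of the mixture.
\end{itemize}
The marginal dispersion $g$ assigns, at $\{1,2\}$, the variance of the two-component mixture, $\sigma^2 + \tfrac14(\mu_1-\mu_2)^2$ for equal weights. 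At the single groups it assigns $\sigma^2$.

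First, $\mathcal{C}_{1} \cap \Nat(B,A) = 0$. A natural $0$-cochain vanishing at $\{*\}$ must, by naturality along the merge, vanish on the image of pullback from $\{*\}$. I would argue it vanishes everywhere by combining this with naturality along the insertions, provided the relevant parameter maps are jointly surjective or separate enough points. This finiteness and surjectivity bookkeeping is where I expect the real work. If needed, I would restrict the admissible value sets so that these maps are surjective.

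Next, I would exhibit the correction explicitly. Take $h$ to be zero at $\{*\}$ and at the singletons, and at $\{1,2\}$ the pullback of the correction term $-\tfrac14(\mu_1-\mu_2)^2$, composed into functions on $\Lambda$. The result $g^{*}+h$ then gives $\sigma^2$ at $\{1,2\}$, which is the within-group dispersion. I would verify naturality by checking $d^{0}(g^{*}+h) = 0$ on each generating morphism, which is a finite check. Uniqueness then follows from the first part together with $\mathcal{C}_{1} \cap \Nat(B,A) = 0$.
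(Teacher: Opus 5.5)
Your treatment of the first two assertions is correct, and it is the paper's argument in direct form. Propositions~\ref{wprop:obstruction} and~\ref{prop:relative} phrase the same computation through the long exact sequence of $0\to C^{\bullet}_{\mathcal{C}}\to C^{\bullet}\to C^{0}/\mathcal{C}\to 0$.

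The third part, however, is built on a wrong parameter map at the merge. Write $\omega_2=\{1,2\}$ and $\omega_1=\{*\}$. The parameter functor is contravariant, $\Theta(\psi)(\mu',s)=(\mu'\circ\psi,s)$. So the merge $p\colon\omega_2\to\omega_1$ gives the diagonal $\Theta(p)\colon\Theta(\omega_1)\to\Theta(\omega_2)$, $(\mu,s)\mapsto((\mu,\mu),s)$, and $A(p)$ is restriction to the diagonal. You use instead a map $m\colon\Theta(\omega_2)\to\Theta(\omega_1)$ to the moments of the mixture. This map goes the wrong way and leaves the grid $M\times S$: for $M=\{0,1\}$ and $S=\{1,2\}$ it produces $(\tfrac12,\tfrac54)$. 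Worse, it defeats your last step. Naturality at the merge would read $n_{\omega_2}=n_{\omega_1}\circ m$, which for $n_{\omega_1}(\mu,s)=s$ forces $n_{\omega_2}=s+\tfrac14(\mu_1-\mu_2)^2$. So in your set-up the marginal dispersion is natural at the merge and the within-group dispersion is not, and the deferred check of $d^{0}(g^{*}+h)=0$ fails exactly there. With the diagonal, the square at $p$ reads $s=s$. Note also that $h_{\omega_2}$ is not a pullback of the term $-\tfrac14(\mu_1-\mu_2)^2$. It is the linear map $f\mapsto f\circ n_{\omega_2}-f\circ g_{\omega_2}$ from $k^{T}$ to $k^{M^{2}\times S}$, where $n_{\omega_2}((\mu_1,\mu_2),s)=s$ takes values in $T$ because $S\subseteq T$.

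Your argument for $\mathcal{C}_1\cap\Nat(B,A)=0$ is missing its key step. The fallback of shrinking the value sets until the parameter maps are surjective cannot work, since the diagonal is never surjective when $|M|\ge 2$. No surjectivity is needed. Since $\Lambda$ is constant, $B(\psi)=\id$, and naturality along a selection $i_k\colon\omega_1\to\omega_2$ reads $h_{\omega_2}=A(i_k)h_{\omega_1}$. Hence a natural cochain is determined by its component at the one-group object. In the paper's $\catO_{\mathrm{full}}$ (two objects; morphisms $p$, $i_1$, $i_2$, the swap $\sigma$ and $c_k=i_kp$), the merged design and the one-group design are the same object $\omega_1$, which gives the claim at once. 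The correction is then the natural extension $n=(g^{*}_{\omega_1},A(i_1)g^{*}_{\omega_1})$. It is natural at $i_2$ because $g_{\omega_1}$ does not depend on $\mu$, at $p$ because $p\,i_1=\id$, and at $\sigma$ because $\sigma i_1=i_2$.

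In your four-object category you would also need the composites $p\circ\iota_k\colon\{k\}\to\{*\}$, whose parameter maps are bijections $M\times S\to M\times S$, to pass from $h_{\{*\}}=0$ to $h_{\{k\}}=0$. Even with these repairs, you would be proving the analogous statement for a different scheme, without the swap, rather than for the one in the theorem.
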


\begin{remark}\label{rem:main-obstruction}
Theorem~\ref{thm:main-obstruction} is proved in Section~\ref{sec:hochschild},
see Propositions~\ref{wprop:obstruction}, \ref{prop:relative}
and~\ref{prop:canonical}. Thus the absolute group $H^{1}$ is an invariant of
the design scheme and not an obstruction group for quantities. We compute it
in Example~\ref{ex:oneway-h1}, where for a constant target it is given by
copies of the first derived limit of the parameter diagram. When the mean takes
at least two values, the marginal dispersion is not meaningful, and it has no
injective natural recalibration (Propositions~\ref{prop:recalib}
and~\ref{prop:oneway-cont-recalib}).
\end{remark}

A \emph{prior} for a bridged model is a family of states
$\pi_{\omega} \colon I \rightsquigarrow \Theta_\Meas(\omega)$, i.e.\ of
probability measures on the parameter objects, and it is coherent over
$\cat{Q}$ when $\pi_{\omega} = \Theta_\Meas(\psi)_{*}\,\pi_{\omega'}$ for all
$\psi \colon \omega \to \omega'$ in $\cat{Q}$ (Definition~\ref{def:prior}).
Thus quantities transform contravariantly with the parameter, and priors
covariantly. Let $\catO_{\mathrm{inj}}$ be the subcategory of $\catO$
generated by the insertions and the isomorphisms.

\begin{theorem}[Priors]\label{thm:main-priors}
Let $\pi$ be a prior for a bridged model.
\begin{enumerate}
\item If $\pi$ is coherent at $\psi \colon \omega \to \omega'$, then
$\pi_{\omega}$ vanishes on the measurable sets disjoint from the image of
$\Theta_\Meas(\psi)$.
\item If $\pi$ is coherent over $\catO_{\mathrm{inj}}$, then $\pi_{\omega}$ is
invariant under $\Theta_\Meas(\mathrm{Aut}\,\omega)$ for every $\omega$.
Coherence over $\catO_{\mathrm{red}}$ imposes no such symmetry.
\item Let $\mathbf{P}$ be the Gaussian linear model with known variance
$\sigma_{0}^{2}$, and let
$\pi^{\tau}_{\omega} = \Norm(0,\tau^{2}I_{\omega}) \otimes
\delta_{\sigma_{0}^{2}}$ be the prior associated to a choice of inner products
on the covariate spaces which restrict along the insertions, where
$I_{\omega}$ is the induced form on $\omega^{*}$. Then the Bayesian
inversion of $\mathbf{P}_{d}$ with respect to $\pi^{\tau}_{\omega}$ exists in
$\BorelStoch$, and it is the Gaussian kernel whose mean is the ridge estimator
with penalty $\lambda = \sigma_{0}^{2}/\tau^{2}$. The prior $\pi^{\tau}$ is
coherent over $\catO_{\mathrm{red}}$, and it is coherent at a linear
automorphism $A$ if and only if $A$ is orthogonal. In particular, it is not
coherent over $\catO_{\mathrm{inj}}$.
\end{enumerate}
\end{theorem}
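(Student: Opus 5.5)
We treat the three parts separately. Throughout we use that $\Theta_\Meas(\psi)$ is deterministic (Theorem~\ref{thm:main}), so coherence at $\psi$ reads $\pi_\omega = \Theta_\Meas(\psi)_*\pi_{\omega'}$ as an equality of pushforward measures.

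\textbf{Part 1.} Write $f=\Theta_\Meas(\psi)\colon \Theta_\Meas(\omega')\to\Theta_\Meas(\omega)$. Take a measurable set $E$ disjoint from the image of $f$. Then $f^{-1}(E)=\emptyset$, so coherence gives
\[
\pi_\omega(E)=\pi_{\omega'}(f^{-1}(E))=0 .
\]
This settles the first claim. We do not need the image of $f$ to be measurable, because the statement only concerns sets disjoint from it.

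\textbf{Part 2.} An automorphism $\alpha$ of $\omega$ lies in $\catO_{\mathrm{inj}}$ by definition. Coherence at $\alpha$ gives $\pi_\omega=\Theta_\Meas(\alpha)_*\pi_\omega$, which is invariance under $\Theta_\Meas(\mathrm{Aut}\,\omega)$. For the negative statement we give a counterexample in the one-way layout with $\catO_{\mathrm{lab}}$. The parameter functor sends $\omega$ to $\R^\omega$, and the insertions act by restriction. Take the product prior $\pi_\omega=\bigotimes_{i\in\omega}\mu_i$ with the $\mu_i$ pairwise distinct. Its marginals along restrictions are again products, so it is consistent over $\catO_{\mathrm{red}}$. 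It is not invariant under transpositions. For this we need labelled points $i$ that are stable across the $\omega$; a countable universe of labels, or finite subsets of $\mathbb N$, provides them.

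\textbf{Part 3.} First we compute the posterior. Fix $d$ and write $X$ for the design matrix in a basis orthonormal for the chosen inner product. The joint law of $(\beta,y)$ is Gaussian, with $\beta\sim\Norm(0,\tau^2 I)$ and $y\mid\beta\sim\Norm(X\beta,\sigma_0^2 I)$. Standard Gaussian conditioning gives the posterior
\[
\Norm\bigl((X^\top X+\lambda I)^{-1}X^\top y,\ \sigma_0^2(X^\top X+\lambda I)^{-1}\bigr), \qquad \lambda=\sigma_0^2/\tau^2 .
\]
The map $y\mapsto$ this Gaussian is measurable. The spaces are standard Borel, so Bayesian inversions exist in $\BorelStoch$ and are a.s.\ unique. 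Hence this kernel, tensored with $\delta_{\sigma_0^2}$, is the inversion. To identify it we only have to check the defining equation of the joint state, which is the Gaussian identity above.

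Next, coherence over $\catO_{\mathrm{red}}$. An insertion $\omega\hookrightarrow\omega'$ induces the restriction $\omega'^*\to\omega^*$, and the inner products restrict by hypothesis. Pushing $\Norm(0,\tau^2 I_{\omega'})$ forward along a map $T$ gives $\Norm(0,\tau^2 TI_{\omega'}T^\top)$. The dual of an isometric inclusion is the orthogonal projection, and $TT^\top=I$ for it, so the pushforward is $\Norm(0,\tau^2 I_\omega)$.

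Finally, the automorphisms. For $A\in\mathrm{Aut}\,\omega$, the pushforward along $A^*$ is $\Norm(0,\tau^2 A^*A^{*\top})$ in orthonormal coordinates. This equals $\Norm(0,\tau^2 I)$ if and only if $A^*$, equivalently $A$, is orthogonal. Covariate spaces of dimension at least one have non-orthogonal automorphisms, for example the scaling by $2$. Combined with Part~2, this shows the prior is not coherent over $\catO_{\mathrm{inj}}$.

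\textbf{Main obstacle.} The delicate step is the existence and identification of the Bayesian inversion within the paper's conventions. We must check that the parameter object, including the $\sigma^2$ factor carrying the Dirac state, is standard Borel. We must also match the paper's definition of inversion, which is only a.s.\ unique, with the explicit kernel. We would handle this by verifying the joint-state equation directly on rectangles and then invoking a.s.\ uniqueness.
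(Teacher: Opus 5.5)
Your proof is correct. For parts~(1) and~(3), and for the positive half of~(2), it is the paper's argument:
\begin{itemize}
\item the preimage computation $\pi_\omega(E)=\pi_{\omega'}(f^{-1}E)=0$;
\item coherence at an automorphism read as $\pi_\omega=\Theta_\Meas(\alpha)_*\pi_\omega$;
\item Gaussian conjugacy for the Bayesian inversion;
\item the covariance identities $TT^{\top}=I$ for the dual of an isometric embedding and $A^{\top}A=I$ for an automorphism.
\end{itemize}

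The only real difference is the claim that $\catO_{\mathrm{red}}$-coherence imposes no symmetry. The paper argues structurally. An inclusion is determined by its source and target, so $\catO_{\mathrm{red}}$ has no distinct parallel morphisms, and the parallel-pair mechanism behind the first half of~(2) has nothing to act on. You instead exhibit a witness: a product prior in the one-way layout that is coherent along restrictions but not invariant under a transposition.

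Your version actually produces a non-symmetric coherent prior, which the structural remark alone does not; in the paper such a witness appears only implicitly, through the dimension gap in Proposition~\ref{prop:priordim}. The paper's version is independent of the model.

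Your example needs two small repairs:
\begin{itemize}
\item Since $\Theta(\omega)=\R^{\omega}\times\R_{>0}$, tensor your product with a fixed state on the dispersion.
\item You do not need pairwise distinct factors or a countable universe of labels. A single unequal pair already breaks invariance under the swap of $\{a,b\}$, for example $\delta_1$ at one label and $\delta_0$ at every other.
\end{itemize}
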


\begin{remark}\label{rem:main-priors}
Theorem~\ref{thm:main-priors} is proved in Sections~\ref{sec:bayes}
and~\ref{sec:ridge}, see Propositions~\ref{prop:coherence}, \ref{prop:ridge}
and~\ref{prop:ridgecoherence}. By part~(1), coherence at a merge of covariate
values confines the prior to the image of the corresponding parameter map,
which is a diagonal in the one-way layout. For this reason naturality of
priors is required over $\catO_{\mathrm{red}}$ or $\catO_{\mathrm{inj}}$ and
not over $\catO$. Coherence over $\catO_{\mathrm{red}}$ is Kolmogorov consistency, and
the invariance of part~(2) is the exchangeability assumed by the categorical
de~Finetti theorem of Fritz, Gonda and
Perrone~\cite{fritz_definetti_2021} (Remark~\ref{rem:definetti}). Part~(3)
corresponds to the usual instruction to standardise the covariates. In the
finite one-way scheme, the coherent priors form polytopes whose dimensions we
compute (Proposition~\ref{prop:priordim}), and the analogue of Jeffreys'
general rule is not coherent, while the analogue of his rule for
location--scale families is (Proposition~\ref{wprop:jeffreys}). Improper
priors and the marginalisation paradoxes of Dawid, Stone and
Zidek~\cite{dawid_stone_zidek_1973}, which cannot arise from proper priors,
require a codomain with unnormalised states, and they are left open
(Remark~\ref{rem:open}).
\end{remark}

The closest work is the thesis of Patterson~\cite{patterson_algebra_2020}, an
algebraic theory of statistical models with machine representation in view,
where the signature of a model is primitive, while here the structure indexed
by the designs is. Fritz~\cite{fritz_synthetic_2020} proves synthetic theorems
on sufficiency inside the hom-sets of one Markov
category~\cite[Def.~14.1]{fritz_synthetic_2020}, while the naturality of
McCullagh and Br\o ns is a condition across a diagram of such hom-sets.
Conditionals, almost sure equality and Bayesian inversion are taken
from~\cite{fritz_synthetic_2020} and from Cho and
Jacobs~\cite{cho_disintegration_2019} (see Section~\ref{sec:ambient}). The
Kolmogorov products of Rischel and Fritz~\cite{fritz_rischel_infinite_2020}
and the categorical de~Finetti theorem are used only in
Remark~\ref{rem:definetti}, and we do not apply them to the designs of
Definition~\ref{def:designdata}. Information
cohomology~\cite{baudot_homological_2015,vigneaux_information_2021} has
coefficients and indexing categories different from those of
Section~\ref{sec:hochschild}, and we do not construct a comparison map.
Jeffreys' general rule~\cite{jeffreys_invariant_1946}, his recommendation for
location--scale families and his objection to the general rule in the problem
of several means~\cite{jeffreys_theory_1961} are used as they are presented by
Kass and Wasserman~\cite{kass_selection_1996} and by Consonni, Fouskakis, Liseo
and Ntzoufras~\cite{consonni_prior_2018}. Proposition~\ref{wprop:jeffreys}
agrees with the preference for reference
priors~\cite{bernardo_reference_1979,berger_formal_2009} over the
multidimensional Jeffreys rule, but we do not claim that coherence
characterises them. Ridge regression is due to Hoerl and
Kennard~\cite{hoerl_ridge_1970}, and its reading as a Gaussian posterior mean
is classical. What is new in Theorem~\ref{thm:main-priors}(3) is its
description as a Bayesian inversion indexed by the designs, together with the
coherence of its prior.

The paper goes as follows. In Section~\ref{sec:mcb} we recall the framework of
McCullagh and Br\o ns, with a complete proof of the equivalence stated by
Br\o ns (Proposition~\ref{prop:brons}), and in Section~\ref{sec:markov} the
notions on Markov categories that we need.
Sections~\ref{sec:bridge}, \ref{sec:meaningful} and~\ref{sec:hochschild}
concern models and quantities, and we refer to their setting as the
\emph{Fisherian shape}. Sections~\ref{sec:ambient}, \ref{sec:bayes}
and~\ref{sec:ridge} concern priors, and we refer to their setting as the
\emph{Bayesian shape}. In Appendix~\ref{sec:Bref} we identify the category of
statistical models of Br\o ns, on the objects which satisfy two regularity
hypotheses, with the arrows of $\Stoch$ whose domain is a copower of the
monoidal unit (Proposition~\ref{wprop:copower}), and we show that it is a
coreflective subcategory of its measurable analogue
(Proposition~\ref{prop:coreflection}). The rest of the paper does not depend
on the appendix.
\section{The McCullagh--Br\o ns framework}\label{sec:mcb}

We recall the definition of a statistical model given by McCullagh
in~\cite{mccullagh_what_2002}, in the form given by Br\o ns
in~\cite{brons_discussion_2002}. All the categories considered in this section
are locally small.

\subsection{Statistical models}

\begin{definition}\label{def:designdata}
The \emph{design data} consist of the following.
\begin{enumerate}
\item A category $\catU$, whose objects are called \emph{statistical units}.
\item A category $\catO$, whose objects are called \emph{covariate spaces}.
\item A category $\catV$, whose objects are called \emph{response scales}.
\item The category $\catS \coloneqq \catV \times \catU^{\op}$ of \emph{sample
space indices} and a functor $\Gamma \colon \catS \to \Meas$. The measurable
space $\Gamma(v,u)$ is called the \emph{sample space} of the $v$-valued
responses on the units $u$.
\item Two functors $J_{\catU} \colon \catU \to \cat{C}$ and
$J_{\catO} \colon \catO \to \cat{C}$ into a category $\cat{C}$. The comma
category
\[
\catD \;\coloneqq\; (J_{\catU} \downarrow J_{\catO})
\]
is called the category of \emph{designs}. Its objects are the triples
$(u,\omega,x)$, where $x \colon J_\catU u \to J_\catO \omega$ is a morphism of
$\cat{C}$ called the \emph{design map}. A morphism
$(u,\omega,x) \to (u',\omega',x')$ is a pair $(\varphi,\psi)$, where
$\varphi \colon u \to u'$ is a morphism of $\catU$ and
$\psi \colon \omega \to \omega'$ is a morphism of $\catO$ such that
$x' \circ J_{\catU}\varphi = J_{\catO}\psi \circ x$. We denote by
$\pr_{\catU} \colon \catD \to \catU$ and $\pr_{\catO} \colon \catD \to \catO$
the projections.
\item A category $\catK$ of \emph{parameter objects} with a faithful functor
$U_{\catK} \colon \catK \to \Set$, and a functor
$\Theta \colon \catO^{\op} \to \catK$ called the \emph{parameter functor}.
\end{enumerate}
\end{definition}

In the examples, the objects of $\catU$ are finite sets of experimental units
and its morphisms are injective maps, i.e.\ inclusions of subsamples. The
morphisms of $\catV$ are measurable maps between response scales, such as
coarsenings of the response or affine rescalings (see
Remark~\ref{cav:scaleV} for the latter). Furthermore $\cat{C} = \Set$ and
$J_{\catU}$, $J_{\catO}$ are the underlying-set functors, so that a design map
assigns a covariate value to each unit, and $\Gamma(v,u) = v^{u}$ with the
product structure.

Let $\Prb \colon \Meas \to \Set$ be the functor which maps a measurable space
to the set of its probability measures and a measurable map to the
pushforward. It is the composition of the Giry monad
$\G \colon \Meas \to \Meas$ (see~\cite{giry_categorical_1982} and
Section~\ref{sec:markov}) with the forgetful functor. Note that a map
$\Theta_0 \to \Prb(S)$ is a family of probability measures on $S$ indexed by
the set $\Theta_0$, and no measurability in the index is required. This is the
only difference between a model and a family of morphisms of $\Stoch$. It
disappears when the parameter object is discrete, and it is an additional
hypothesis in general (see Theorem~\ref{thm:bridge-meas}).

\begin{definition}\label{def:mcbmodel}
Let the design data be as in Definition~\ref{def:designdata} and consider the
functors $L, R \colon \catV \times \catD^{\op} \to \Set$ given by
\[
L \;\coloneqq\; U_{\catK} \circ \Theta \circ \pr_{\catO}^{\op} \circ \pr_{2}
\qquad\text{and}\qquad
R \;\coloneqq\; \Prb \circ \Gamma \circ \bigl(\id_{\catV} \times \pr_{\catU}^{\op}\bigr),
\]
where $\pr_2 \colon \catV \times \catD^{\op} \to \catD^{\op}$ is the
projection. A \emph{statistical model} in the sense of McCullagh and Br\o ns,
or \emph{McCullagh--Br\o ns model}, is a natural transformation
\[
P \colon L \Longrightarrow R.
\]
\end{definition}

Thus a statistical model is given by a map
\[
P_{v,d} \colon \; U_\catK\Theta(\omega) \longrightarrow \Prb\bigl(\Gamma(v,u)\bigr),
\qquad \theta \longmapsto P_{v,d}(\,\cdot \mid \theta),
\]
for every response scale $v$ and every design $d = (u,\omega,x)$, such that the
square corresponding to $m$ commutes for every morphism $m$ of
$\catV \times \catD^{\op}$ (see~\eqref{eq:ND} and~\eqref{eq:NV} below). The
categories and the functors introduced so far fit into the following hexagonal
diagram, due to Br\o ns, and a statistical model is a natural transformation
between the two compositions from $\catV \times \catD^{\op}$ to $\Set$.
\[
\begin{tikzcd}[column sep=1.6em, row sep=1.9em]
& \catV \times \catD^{\op}
\arrow[dl, "\pr_2"'] \arrow[dr, "\id_\catV \times \pr_\catU^{\op}"] & \\
\catD^{\op} \arrow[d, "\pr_\catO^{\op}"'] & & \catS \arrow[d, "\Gamma"] \\
\catO^{\op} \arrow[d, "\Theta"'] \arrow[rr, phantom, "\overset{P}{\Longrightarrow}" description] & & \Meas \arrow[d, "\Prb"] \\
\catK \arrow[dr, "U_\catK"']
& & \Set \arrow[dl, "\id_{\Set}"]\\
&\Set
\end{tikzcd}
\]

\begin{remark}\label{rem:attribution}
The product categories $\catS = \catV \times \catU^{\op}$ and
$\catV \times \catD^{\op}$ are due to
McCullagh~\cite[\S4.7, p.~1240]{mccullagh_what_2002} and are restated by
Br\o ns in his own notation~\cite[p.~1280]{brons_discussion_2002}.
In~\cite[\S4.2]{mccullagh_what_2002} the response scale is fixed, and for fixed
$v$ the naturality squares of Definition~\ref{def:mcbmodel} are the squares
of~\cite[p.~1236]{mccullagh_what_2002}. McCullagh takes the morphisms of
$\catU$ and $\catO$ to be injective~\cite[\S4.1]{mccullagh_what_2002}. Br\o ns
considers this restriction dispensable and does not impose
it~\cite[p.~1281]{brons_discussion_2002}. In
Definition~\ref{def:designdata} we follow Br\o ns for $\catO$, and this choice
fixes the class of morphisms over which naturality is quantified in
Section~\ref{sec:meaningful}. In all our examples
$\catU = \FinSet_{\mathrm{inj}}$, and the injectivity of the maps between units
is used in the verifications of Section~\ref{sec:worked}.
\end{remark}

\subsection{Br\o ns' comma-category equivalence}

Let $F \colon \cat{B} \to \cat{E}$ be a functor. Recall that the objects of the
comma category $(\id_{\cat E} \downarrow F)$ are the triples $(e, b, p)$, where
$p \colon e \to F b$ is a morphism of $\cat{E}$.

\begin{definition}\label{def:statmod}
The \emph{category of statistical models} is the comma category
\[
\StatMod \;\coloneqq\; \bigl(\id_{\Set} \downarrow \Prb\,\bigr).
\]
Its objects are the triples $(\Theta_0,S,p)$, where $\Theta_0$ is a set, $S$ is
a measurable space and $p \colon \Theta_0 \to \Prb(S)$ is a map, i.e.\ a
parametrised family of probability measures on $S$. A morphism
$(\Theta_0,S,p) \to (\Theta_0',S',p')$ is a pair $(r,f)$, where
$r \colon \Theta_0 \to \Theta_0'$ is a map and $f \colon S \to S'$ is a
measurable map such that $\Prb(f) \circ p = p' \circ r$.
\end{definition}

We have the following result.

\begin{proposition}[Br\o ns]\label{prop:brons}
There is a bijection, natural in all the data, between
\begin{enumerate}
\item the statistical models $P \colon L \Rightarrow R$ of
Definition~\ref{def:mcbmodel};
\item the functors $\widehat{P} \colon \catV \times \catD^{\op} \to \StatMod$
whose compositions with the two projections of the comma category are
$L \colon \catV \times \catD^{\op} \to \Set$ and
$\Gamma \circ (\id_{\catV} \times \pr_\catU^{\op}) \colon
\catV \times \catD^{\op} \to \Meas$.
\end{enumerate}
\end{proposition}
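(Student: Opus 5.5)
The argument is the general fact that, for functors $L\colon \cat{A}\to\cat{E}$ and $G\colon\cat{A}\to\cat{B}$ and $F\colon\cat{B}\to\cat{E}$, natural transformations $L\Rightarrow F\circ G$ correspond bijectively to functors $\cat{A}\to(\id_{\cat E}\downarrow F)$ lifting the pair $(L,G)$. We apply it with $\cat{A}=\catV\times\catD^{\op}$, $\cat{E}=\Set$, $\cat{B}=\Meas$, $F=\Prb$ and $G=\Gamma\circ(\id_\catV\times\pr_\catU^{\op})$. By Definition~\ref{def:mcbmodel} we have $R=F\circ G$, so the models $P\colon L\Rightarrow R$ are exactly the natural transformations $L\Rightarrow FG$.

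\textbf{From models to functors.} Given $P$, define $\widehat P(a)=(L a,\,G a,\,P_a)$ on objects. This is an object of $\StatMod$ because $P_a\colon La\to\Prb(Ga)$ is a map of sets. On a morphism $m\colon a\to a'$ put $\widehat P(m)=(Lm,\,Gm)$. The defining condition of a morphism of $\StatMod$ (Definition~\ref{def:statmod}) reads $\Prb(Gm)\circ P_a=P_{a'}\circ Lm$, which is exactly the naturality square of $P$ at $m$. Functoriality of $\widehat P$ follows from that of $L$ and $G$, since composition and identities in a comma category are computed componentwise. By construction the two projections of $\widehat P$ are $L$ and $G$.

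\textbf{From functors to models.} Conversely, let $\widehat P$ have projections $L$ and $G$. Its object at $a$ is then a triple $(La,Ga,p_a)$ with $p_a\colon La\to\Prb(Ga)$. Its morphism at $m$ is a pair with components $Lm$ and $Gm$, and the comma condition gives naturality of $p=(p_a)_a$. Thus $p\colon L\Rightarrow R$ is a model. The two constructions are inverse to each other: the only datum of $\widehat P$ not fixed by the projections is the third component of each object, because the morphisms are determined by their projections (a morphism of $\StatMod$ is just a pair $(r,f)$ subject to a condition).

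\textbf{Naturality in the data.} Finally we check that the bijection is natural in the data, i.e.\ compatible with whiskering along functors $\cat{A}'\to\cat{A}$ and with changes of $L$ and $G$ through natural transformations acting on the comma category. This is immediate from the componentwise formulas. The only point needing care, rather than a real obstacle, is fixing what ``natural in all the data'' means. I would state it as naturality in $\cat{A}$ under precomposition, and in $L$ under precomposition with natural transformations $L'\Rightarrow L$; the latter acts on lifts by changing first components, which is legitimate because morphisms $(r,f)$ may be precomposed with $(\alpha_a,\id)$.
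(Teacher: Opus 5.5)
Your proposal is correct and follows the paper's proof. Both arguments reduce the statement to the universal property of the comma category $(\id_{\Set}\downarrow\Prb)$: objects go to $(La,Ga,P_a)$, morphisms are forced to be $(Lm,Gm)$, and the comma condition is exactly the naturality square of $P$. You spell out the two directions and the meaning of ``natural in the data'' in more detail than the paper, which leaves the latter as a straightforward verification.
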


\begin{proof}
Let $G \colon \cat{A} \to \cat{E}$, $H \colon \cat{A} \to \cat{B}$ and
$F \colon \cat{B} \to \cat{E}$ be functors. By the universal property of the
comma category, the functors $\cat{A} \to (\id_{\cat E} \downarrow F)$ whose
compositions with the two projections are $G$ and $H$ correspond bijectively
to the natural transformations $G \Rightarrow F H$. Indeed, an assignment
$a \mapsto (G a, H a, P_a)$ on the objects is functorial if and only if the
squares of $P$ commute, and the values on the morphisms are forced. The
statement follows by taking $\cat{A} = \catV \times \catD^{\op}$,
$\cat{E} = \Set$, $\cat{B} = \Meas$, $G = L$,
$H = \Gamma \circ (\id_{\catV} \times \pr_\catU^{\op})$ and $F = \Prb$. The
naturality in the data is a straightforward verification.
\end{proof}

\begin{remark}\label{rem:brons-sketch}
In~\cite[pp.~1280,~1282]{brons_discussion_2002} the result is stated with a
sketch of the proof.
\end{remark}

\subsection{Examples}\label{sec:worked}

We give three examples of the design data of Definition~\ref{def:designdata},
namely the one-way layout, multiple linear regression and binary
classification, and we verify that they give statistical models. We also give
two families which are not statistical models. We denote by
$\FinSet_{\mathrm{inj}}$ the category of finite sets and injective maps, and by
$\Vect_{\R}^{\mathrm{fd}}$ the category of finite dimensional real vector
spaces and linear maps.

In all the examples $\cat{C} = \Set$ and $J_{\catU}$, $J_{\catO}$ are the
underlying-set functors. Hence an object of $\catD$ is a triple $(u,\omega,x)$
with $x \colon u \to \omega$ a map, and a morphism
$(u,\omega,x)\to(u',\omega',x')$ is a pair $(\varphi,\psi)$ such that
\begin{equation}\label{eq:commacond}
x' \circ \varphi \;=\; \psi \circ x .
\end{equation}
In words, the covariate value of a unit of the smaller design is carried by
$\psi$ to the covariate value of the same unit in the larger design. This is
the compatibility needed to compare a model across two designs.
Let $v \in \ob\catV$ and let
$f = (\varphi,\psi) \colon d=(u,\omega,x) \to d'=(u',\omega',x')$ be a morphism
of $\catD$. The square of Definition~\ref{def:mcbmodel} corresponding to
$(\id_{v},f^{\op})$ is
\begin{equation}\label{eq:ND}
\begin{tikzcd}[column sep=5em, row sep=2.6em]
U_{\catK}\Theta(\omega') \arrow[r,"P_{v,d'}"] \arrow[d,"\Theta(\psi)"'] &
\Prb\bigl(\Gamma(v,u')\bigr) \arrow[d,"\Prb(\res_{\varphi})"] \\
U_{\catK}\Theta(\omega) \arrow[r,"P_{v,d}"'] &
\Prb\bigl(\Gamma(v,u)\bigr)
\end{tikzcd}
\end{equation}
where $\res_{\varphi} \colon v^{u'} \to v^{u}$, $y \mapsto y\circ\varphi$, is
the restriction along $\varphi$. It says that the marginal on the subsample of
the model at $d'$ is the model at $d$ evaluated at the pulled back parameter.
Let now $\alpha \colon v \to v'$ be a morphism of $\catV$ and let $d$ be fixed.
Since $L$ does not depend on $\catV$, the corresponding square reduces to
\begin{equation}\label{eq:NV}
\Prb(\alpha^{u}) \circ P_{v,d} \;=\; P_{v',d}
\qquad\text{on } U_{\catK}\Theta(\omega).
\end{equation}
We write (N-$\catU$), (N-$\catO$) and (N-$\catV$) for the conditions obtained
from~\eqref{eq:ND} and~\eqref{eq:NV} by letting only $\varphi$, only $\psi$ and
only $\alpha$ be different from the identity, respectively.

The verifications below use only~\eqref{eq:commacond}, the finiteness of $u$
and the injectivity of $\varphi$. The latter is necessary. Indeed, if $\varphi$
is not injective, then~\eqref{eq:ND} fails in the three models below, since two
units with the same image have equal responses and are not independent.

\subsubsection{The one-way layout}

This example will be used throughout the paper (see
Example~\ref{ex:oneway-cont}, Section~\ref{sec:meaningful} and
Example~\ref{ex:oneway}). A single factor with finitely many levels, called
\emph{treatments}, is applied to the experimental units, and each unit receives
exactly one treatment. There is no other explanatory variable. A
\emph{treatment label set} is a finite non-empty set $\omega$ whose elements
are the names of the treatments. It carries no further structure, and for this
reason the covariate spaces below are sets and not vector spaces as in
Example~\ref{ex:linreg}. The passage between the two descriptions is dummy
coding.

\begin{example}\label{ex:oneway-setup}
\leavevmode
\begin{enumerate}
\item \textbf{Units.} $\catU = \FinSet_{\mathrm{inj}}$. An object $u$ is the
set of the enrolled units, and a morphism
$\varphi \colon u \hookrightarrow u'$ is the inclusion of a subsample. We write
$n = |u|$.

\item \textbf{Covariate spaces.} $\catO = \catO_{\mathrm{lab}}$ is the full
subcategory of $\FinSet$ of the finite non-empty sets, i.e.\ of the treatment
label sets. Every morphism $\psi \colon \omega \to \omega'$ is a surjection
followed by an injection. We use the following terminology.
\begin{itemize}
\item If $\psi$ is surjective it is called a \emph{merging} of treatments. The
fibres $\psi^{-1}(b)$, with $b \in \omega'$, are the blocks of the treatments
which are identified.
\item If $\psi$ is injective and not bijective it is called a \emph{selection}
of a proper subfamily of treatments. If $\psi$ is a proper inclusion
$\omega' \subsetneq \omega$ we call it an \emph{insertion}, or design
reduction, after Tjur~\cite[p.~1299]{tjur_discussion_2002}. Every injection is
a bijection followed by an insertion. Tjur's criterion acts on the quantities
of Section~\ref{sec:meaningful} through the insertions only (see
Lemma~\ref{lem:tjurshadow}).
\item If $\psi$ is bijective it is called a \emph{relabelling} of the
treatments.
\item If $\psi$ is constant, it collapses all the treatments to a single one.
\end{itemize}
We shall also consider some subcategories of $\catO_{\mathrm{lab}}$, namely the
wide subcategory $\catO_{\mathrm{merge}}$ of the surjections and the
categories $\catO_{\mathrm{arr}} \subset \catO_{\mathrm{full}}$ with two
objects of Example~\ref{ex:oneway}. Naturality over a category implies
naturality over its subcategories, so that Lemma~\ref{lem:oneway-model} holds
for all of them.

\item \textbf{Response scales.} $\catV$ has the single object $\R$, with its
Borel $\sigma$-algebra, and only the identity morphism. Hence~\eqref{eq:NV} is
empty (see Remark~\ref{cav:scaleV} for the affine maps).

\item \textbf{Sample spaces.} $\catS = \catV \times \catU^{\op}$ and
$\Gamma(\R,u) = \R^{u}$ with the product $\sigma$-algebra, i.e.\ the space of
the response vectors $y = (y_i)_{i \in u}$. The morphism
$\varphi^{\op} \colon u' \to u$ of $\catU^{\op}$ is mapped by $\Gamma$ to the
restriction $\res_{\varphi} \colon \R^{u'} \to \R^{u}$,
$y \mapsto y \circ \varphi$.

\item \textbf{Designs.} An object of $\catD$ is a triple $(u,\omega,x)$, where
$x \colon u \to \omega$ is the \emph{treatment assignment}, i.e.\ $x(i)$ is the
treatment received by the unit $i$. Equivalently $x$ is the partition of $u$
into the \emph{groups}
\[
u_a \;\coloneqq\; x^{-1}(a) \subseteq u, \qquad a \in \omega,
\qquad n_a \coloneqq |u_a|, \qquad \textstyle\sum_{a \in \omega} n_a = n,
\]
some of which can be empty. The design is called \emph{balanced} if all the
$n_a$ are equal, and $x$ is surjective if and only if every treatment is
observed. For a morphism $(\varphi,\psi)$ condition~\eqref{eq:commacond} reads
\[
x'(\varphi(i)) \;=\; \psi(x(i)) \qquad (i \in u),
\]
i.e.\ each retained unit receives in the larger design the image under $\psi$
of the treatment it receives in the smaller one.

\item \textbf{Parameters.} The objects of $\catK$ are the products
$W \times \R_{>0}$ with $W \in \Vect_{\R}^{\mathrm{fd}}$, its morphisms are the
maps $A \times \id_{\R_{>0}}$ with $A$ linear, and $U_{\catK}$ is the
underlying-set functor. The parameter functor is
\[
\Theta(\omega) \;=\; \R^{\omega} \times \R_{>0},
\qquad
\Theta(\psi)(\mu',\sigma^{2}) \;=\; (\mu' \circ \psi,\ \sigma^{2}),
\]
i.e.\ $\bigl(\Theta(\psi)\mu'\bigr)_{a} = \mu'_{\psi(a)}$. The map
$\mu' \mapsto \mu' \circ \psi$ is linear, so that $\Theta(\psi)$ is a morphism of
$\catK$. A parameter value is
a pair $\theta = (\mu,\sigma^{2})$, where $\mu = (\mu_a)_{a \in \omega}$ is the
vector of the \emph{group means}, one for each treatment, and $\sigma^{2}$ is
the common error variance, which is not changed by $\Theta(\psi)$.
\end{enumerate}

\noindent For $d = (u,\omega,x)$ and
$\theta = (\mu,\sigma^{2}) \in \R^{\omega} \times \R_{>0}$ we set
\[
P_{\R,d}\bigl(\,\cdot \mid \mu,\sigma^{2}\bigr)
\;\coloneqq\; \bigotimes_{i \in u} \Norm(\mu_{x(i)},\,\sigma^{2})
\qquad \text{on } \R^{u},
\]
i.e.\ $Y_i = \mu_{x(i)} + \varepsilon_i$ with the $\varepsilon_i$ independent
and distributed as $\Norm(0,\sigma^{2})$. This is the Gaussian one-way layout,
where the group sizes $n_a$ are given by the design and no constraint such as
$\sum_a \mu_a = 0$ is imposed on $\mu$.
\end{example}

\begin{lemma}\label{lem:oneway-model}
The family $P$ of Example~\ref{ex:oneway-setup} is a statistical model.
\end{lemma}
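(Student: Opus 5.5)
To prove Lemma~\ref{lem:oneway-model} we verify the naturality squares~\eqref{eq:ND} and~\eqref{eq:NV} for every morphism of $\catV \times \catD^{\op}$. Since $\catV$ has only the identity morphism, \eqref{eq:NV} is empty. Every morphism of $\catV \times \catD^{\op}$ is then of the form $(\id_{\R}, f^{\op})$ with $f = (\varphi,\psi) \colon (u,\omega,x) \to (u',\omega',x')$. It therefore suffices to show, for every $(\mu',\sigma^{2}) \in \R^{\omega'} \times \R_{>0}$, the identity
\[
\Prb(\res_{\varphi})\Bigl(\bigotimes_{j \in u'} \Norm(\mu'_{x'(j)},\sigma^{2})\Bigr)
\;=\; \bigotimes_{i \in u} \Norm\bigl((\mu'\circ\psi)_{x(i)},\sigma^{2}\bigr).
\]
We do not need to split $f$ into its $\catU$ and $\catO$ parts, since the square is checked directly for a general $f$.

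\textbf{Step 1: the pushforward.} We compute the left-hand side. Because $\varphi$ is injective, $\res_{\varphi} \colon \R^{u'} \to \R^{u}$ is, up to the bijection $u \cong \varphi(u)$, the coordinate projection onto the coordinates indexed by $\varphi(u) \subseteq u'$. The pushforward of a finite product measure along a coordinate projection is the product of the factors indexed by the retained coordinates. To make this rigorous, we check it on measurable rectangles $\prod_{i\in u} B_i$. Their preimage under $\res_{\varphi}$ is $\prod_{j \in u'} C_j$ with $C_{\varphi(i)} = B_i$ and $C_j = \R$ for $j \notin \varphi(u)$, and this relies again on injectivity, since each $j$ is hit at most once. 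Its measure is $\prod_{i \in u} \Norm(\mu'_{x'(\varphi(i))},\sigma^2)(B_i)$. Rectangles form a $\pi$-system generating the product $\sigma$-algebra, so uniqueness of measures gives
\[
\Prb(\res_{\varphi})\Bigl(\bigotimes_{j\in u'}\Norm(\mu'_{x'(j)},\sigma^{2})\Bigr)=\bigotimes_{i\in u}\Norm(\mu'_{x'(\varphi(i))},\sigma^2).
\]
The finiteness of $u'$ is what ensures the product measure is the elementary finite one.

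\textbf{Step 2: matching the means.} By the comma condition~\eqref{eq:commacond}, $x'(\varphi(i)) = \psi(x(i))$, hence $\mu'_{x'(\varphi(i))} = (\mu'\circ\psi)_{x(i)} = (\Theta(\psi)\mu')_{x(i)}$. The variance is unchanged by $\Theta(\psi)$, so the right-hand side is exactly $P_{\R,d}(\cdot\mid\Theta(\psi)(\mu',\sigma^2))$, which closes the square. Functoriality of $L$ and $R$ is part of the setup, since $\Theta$ is a functor and $\Prb\circ\Gamma$ composes restrictions correctly. Measurability of $\mu\mapsto$ the normal product is not required by Definition~\ref{def:mcbmodel}.

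The main point requiring care is Step 1, namely the identification of the marginal via the $\pi$-system argument, together with making explicit where injectivity of $\varphi$ enters. Everything else is bookkeeping with~\eqref{eq:commacond}. We would also remark that the argument applies verbatim to every subcategory of $\catO_{\mathrm{lab}}$, as noted in the setup.
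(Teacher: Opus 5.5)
Your proof is correct and follows essentially the same route as the paper: you check the square~\eqref{eq:ND} for a general morphism $(\varphi,\psi)$, identify the pushforward along $\res_{\varphi}$ as the product of the factors indexed by $\varphi(u)$ using independence and the injectivity of $\varphi$, and match the means via~\eqref{eq:commacond}, with~\eqref{eq:NV} empty because $\catV$ is discrete. The only difference is that you justify the marginalisation step explicitly with the $\pi$-system of measurable rectangles, which the paper leaves implicit.
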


\begin{proof}
Let $f = (\varphi,\psi) \colon d \to d'$ be a morphism of $\catD$ and let
$\theta' = (\mu',\sigma^{2}) \in \Theta(\omega')$. We have
$\Theta(\psi)\theta' = (\mu' \circ \psi,\sigma^{2})$, hence
\[
P_{\R,d}\bigl(\,\cdot \mid \Theta(\psi)\theta'\bigr)
\;=\; \bigotimes_{i \in u} \Norm(\mu'_{\psi(x(i))},\ \sigma^{2}).
\]
On the other hand
$P_{\R,d'}(\,\cdot \mid \theta') = \bigotimes_{j \in u'}
\Norm(\mu'_{x'(j)},\sigma^{2})$, and the pushforward along $\res_{\varphi}$ is
the marginalisation onto the coordinates in $\varphi(u) \subseteq u'$. Since
the factors are independent, it is equal to
$\bigotimes_{i \in u} \Norm(\mu'_{x'(\varphi(i))},\sigma^{2})$. The two
measures coincide by~\eqref{eq:commacond}, so that~\eqref{eq:ND} commutes.
Condition~\eqref{eq:NV} is empty because $\catV$ is discrete.
\end{proof}

The proof uses only the independence across the units and the fact that the
law of the unit $i$ depends on the design only through $x(i)$.

\begin{remark}\label{rem:oneway-slots}
For the one-way layout, the conditions (N-$\catU$), (N-$\catO$) and
(N-$\catV$) read as follows.
\begin{itemize}
\item[(N-$\catU$)] Let $\psi = \id_{\omega}$ and let $\varphi$ be a proper
inclusion. The marginal of the layout on the subsample $\varphi(u)$ is the
layout on that subsample, at the same $(\mu,\sigma^{2})$ and with the
assignment $x' \circ \varphi$. This is consistency under subsampling.
\item[(N-$\catO$)] Let $\varphi = \id_{u}$. If $\psi$ is a merging, then
$\mu' \circ \psi$ is constant on the blocks of $\psi$, and the square says
that the model at $\omega$ evaluated at such a mean vector is the merged model
at $\mu'$. Note that naturality does not say that the merged groups have equal
means. If $\psi$ is injective, the same square is read on a subfamily of
treatments. When $\psi$ is an inclusion, this is design reduction, and by
Lemma~\ref{lem:tjurshadow} it is the only part of (N-$\catO$) which is reached
by Tjur's criterion. A general injection adds a relabelling of the surviving
groups. In Example~\ref{ex:oneway-findings} the merging alone detects nothing,
see part~(i), and the selections detect the failure, see part~(ii).
\item[(N-$\catV$)] It is empty by
Example~\ref{ex:oneway-setup}(3).
\end{itemize}
\end{remark}

\begin{remark}\label{rem:oneway-aut}
By~\eqref{eq:commacond} an automorphism $(\varphi,\psi)$ of a design
$(u,\omega,x)$ is given by a relabelling $\psi$ which preserves the group sizes
and by a bijection $\varphi$ which carries $u_a$ onto $u_{\psi(a)}$ for all
$a$. Naturality at $(\varphi,\id)$ is the within-group exchangeability of
$P_{\R,d}(\,\cdot \mid \theta)$, and naturality at a general $(\varphi,\psi)$
is its equivariance under the relabelling of the treatments, which is
available for $\catO$ and not for $\catV$ because $\Theta$ is defined on
$\catO^{\op}$ (see Remark~\ref{cav:scaleV}).
\end{remark}

\begin{remark}\label{rem:oneway-ident}
The map $\Theta(\psi) = \psi^{*} \times \id$ is injective if and only if $\psi$
is surjective, and it is surjective if and only if $\psi$ is injective. Since
$\mu_a$ appears in $P_{\R,d}$ only for $a \in x(u)$, the mean $\mu$ is
identified only on the observed levels, which is the analogue of the condition
on the column rank under which $\beta$ is identified in
Example~\ref{ex:linreg}.
\end{remark}

\subsubsection{Multiple linear regression}

\begin{example}\label{ex:linreg}
\leavevmode
\begin{enumerate}
\item \textbf{Units.} $\catU = \FinSet_{\mathrm{inj}}$, as in
Example~\ref{ex:oneway-setup}.

\item \textbf{Covariate spaces.} $\catO = \Vect_{\R}^{\mathrm{fd}}$. An object
$\omega \cong \R^{p}$ is the space of the covariate vectors. A morphism
$\psi \colon \omega \to \omega'$ is the inclusion of the covariates of a
submodel into a larger set, or a linear recoding such as centring,
orthogonalisation or contrast coding.

\item \textbf{Response scales.} $\catV$ has the single object $\R$, with its
Borel $\sigma$-algebra, and only the identity morphism (see
Remark~\ref{cav:scaleV}).

\item \textbf{Sample spaces.} $\catS = \catV\times\catU^{\op}$ and
$\Gamma(\R,u) = \R^{u}$ with the product $\sigma$-algebra. The morphism
$\varphi^{\op}\colon u' \to u$ is mapped to the restriction
$\res_\varphi \colon \R^{u'} \to \R^{u}$.

\item \textbf{Designs.} An object $(u,\omega,x)$ of $\catD$ is a design
matrix, i.e.\ $x(i) \in \omega$ is the covariate vector of the unit $i$. Once a
basis of $\omega\cong\R^{p}$ and an enumeration of $u$ are chosen, the map $x$
is the $n\times p$ matrix $X$.

\item \textbf{Parameters.} $\catK$ and $U_{\catK}$ are as in
Example~\ref{ex:oneway-setup}. The functor $U_{\catK}$ is faithful because a
linear map is determined by its underlying function. The parameter functor is
the dual, i.e.\ the subrepresentation of the linear functionals inside the
standard representation of
McCullagh~\cite[\S4.2, p.~1235]{mccullagh_what_2002},
\[
\Theta(\omega) \;=\; \omega^{*}\times\R_{>0},
\qquad
\Theta(\psi)(\beta',\sigma^{2}) \;=\; (\beta'\circ\psi,\ \sigma^{2}) .
\]
\end{enumerate}

\noindent For $d=(u,\omega,x)$ and
$\theta = (\beta,\sigma^{2}) \in \omega^{*}\times\R_{>0}$ we set
\[
P_{\R,d}(\,\cdot \mid \beta,\sigma^{2})
\;\coloneqq\; \bigotimes_{i\in u} \Norm(\beta(x(i)),\,\sigma^{2})
\qquad\text{on } \R^{u},
\]
i.e.\ $Y \sim \Norm_{n}(X\beta,\sigma^{2}I_{n})$ in coordinates. This model has
no intercept. One is obtained by taking as $\catO$ the category of the finite
dimensional real affine spaces and affine maps, and as $\Theta(\omega)$ the
product of the space of the affine functionals on $\omega$ with $\R_{>0}$. This
is what the regression examples of~\cite[\S6]{mccullagh_what_2002} effectively
use.
\end{example}

\begin{lemma}\label{lem:linreg-model}
The family $P$ of Example~\ref{ex:linreg} is a statistical model.
\end{lemma}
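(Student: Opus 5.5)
The proof follows the pattern of Lemma~\ref{lem:oneway-model}: we check the naturality squares \eqref{eq:ND} and \eqref{eq:NV} for every morphism of $\catV \times \catD^{\op}$. Since $\catV$ has only the identity morphism, \eqref{eq:NV} is empty, and it suffices to treat a morphism of designs
\[
f = (\varphi,\psi) \colon d=(u,\omega,x) \to d'=(u',\omega',x')
\]
with $\varphi$ injective and $\psi$ linear. The data satisfy \eqref{eq:commacond}, that is, $x'(\varphi(i)) = \psi(x(i))$ for all $i \in u$.

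\textbf{Well-definedness.} Before checking naturality, note that $P_{\R,d}(\,\cdot \mid \beta,\sigma^{2})$ is a probability measure on $\R^{u}$ with the product $\sigma$-algebra. It is a finite product of Gaussian laws, since $u$ is finite. Note also that $\Theta(\psi) = \psi^{*}\times\id_{\R_{>0}}$ is a morphism of $\catK$, because $\beta' \mapsto \beta'\circ\psi$ is linear, and that $\Theta$ is contravariantly functorial.

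\textbf{Naturality.} Fix $\theta' = (\beta',\sigma^{2}) \in \omega'^{*}\times\R_{>0}$.

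\emph{Lower-left path.} Evaluating the model at $d$ on the pulled-back parameter gives
\[
P_{\R,d}(\,\cdot \mid \beta'\circ\psi,\sigma^{2}) = \bigotimes_{i\in u}\Norm\bigl(\beta'(\psi(x(i))),\sigma^{2}\bigr).
\]

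\emph{Upper-right path.} The pushforward of $\bigotimes_{j\in u'}\Norm(\beta'(x'(j)),\sigma^{2})$ along $\res_\varphi$ is the marginal on the coordinates $\varphi(u)$. Because $\varphi$ is injective, these are $|u|$ distinct coordinates of a product measure. The marginal is therefore the product of the corresponding factors, reindexed by $u$:
\[
\bigotimes_{i\in u}\Norm\bigl(\beta'(x'(\varphi(i))),\sigma^{2}\bigr).
\]

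\emph{Comparison.} By \eqref{eq:commacond}, $\beta'(x'(\varphi(i))) = \beta'(\psi(x(i)))$ for every $i \in u$. The two measures therefore agree factor by factor, and the square \eqref{eq:ND} commutes.

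\textbf{Main obstacle.} The only step needing care is the marginalisation claim. Two products of probability measures on $\R^{u}$ agree because they agree on measurable rectangles, which generate the product $\sigma$-algebra and form a $\pi$-system. The preimage under $\res_\varphi$ of a rectangle $\prod_{i \in u} B_i$ is the rectangle $\prod_{j\in u'} C_j$ with $C_{\varphi(i)} = B_i$ and $C_j = \R$ for $j \notin \varphi(u)$. This description is valid precisely because $\varphi$ is injective. Its measure is $\prod_{i \in u}\Norm(\beta'(x'(\varphi(i))),\sigma^{2})(B_i)$, which gives the formula for the upper-right path.

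Everything else is formal. As observed after Lemma~\ref{lem:oneway-model}, the argument uses only independence across units and the fact that the law of unit $i$ depends on the design only through $x(i)$, here through $\beta(x(i))$.
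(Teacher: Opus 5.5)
Your proof is correct and follows essentially the paper's argument. You compute both paths of \eqref{eq:ND}, identify the pushforward along $\res_{\varphi}$ as the marginal on $\varphi(u)$, and conclude by \eqref{eq:commacond}, with \eqref{eq:NV} empty because $\catV$ is discrete. Your rectangle argument only makes explicit the marginalisation step that the paper leaves implicit.
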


\begin{proof}
The proof is the same as that of Lemma~\ref{lem:oneway-model}. Let
$f=(\varphi,\psi) \colon d \to d'$ and let $\theta'=(\beta',\sigma^{2}) \in
\Theta(\omega')$. Then $P_{\R,d}\bigl(\,\cdot \mid \Theta(\psi)\theta'\bigr)
= \bigotimes_{i\in u}\Norm(\beta'(\psi(x(i))),\sigma^{2})$, and the pushforward
of $P_{\R,d'}(\,\cdot\mid\theta')$ along $\res_{\varphi}$ is
$\bigotimes_{i\in u}\Norm(\beta'(x'(\varphi(i))),\sigma^{2})$. The two measures
coincide by~\eqref{eq:commacond}.
\end{proof}

\begin{caveat}\label{cav:scaleV}
Since $L$ does not depend on $\catV$, a morphism $\alpha \colon v \to v'$
different from the identity imposes~\eqref{eq:NV} at the same value of the
parameter. Let $\alpha(y)=ay+b$. Then the left hand side of~\eqref{eq:NV} is
$\bigotimes_{i}\Norm(a\beta(x(i))+b,\ a^{2}\sigma^{2})$, which is equal to
$P_{\R,d}(\,\cdot\mid\beta,\sigma^{2})$ only if $(a,b)=(1,0)$. Hence the
Gaussian linear model is not natural over $\catV = \mathrm{Aff}(\R)$, and for
this reason we take $\catV$ discrete here and in
Example~\ref{ex:oneway-setup}. The affine equivariance of the model is the
statement that the model is a fixed point of an action on the parameter. To
express it in the hexagon of Br\o ns one needs a parameter functor defined on
$\catV^{\op}\times\catO^{\op}$ and not on $\catO^{\op}$ alone. Indeed the
parameter of McCullagh is a subrepresentation of $v^{\omega}$ and it does
change with the response scale~\cite[p.~1235]{mccullagh_what_2002}, so that
the independence of $\Theta$ from $\catV$ comes from the formulation of
Br\o ns. Thus affine equivariance is a mild generalisation of
Definition~\ref{def:designdata} and not an instance of it. The equivariance
under the relabelling of the treatments is instead an instance of naturality,
because it acts through $\catO^{\op}$.
\end{caveat}

\subsubsection{Binary classification}

\begin{example}\label{ex:logreg}
\leavevmode
\begin{enumerate}
\item \textbf{Units.} $\catU = \FinSet_{\mathrm{inj}}$, as above.

\item \textbf{Covariate spaces.} $\catO = \Vect_{\R}^{\mathrm{fd}}$. The
objects are feature spaces, and the morphisms are linear feature maps.

\item \textbf{Response scales.} $\catV$ has the two objects
$\mathbf{2}=\{0,1\}$ and $\mathbf{1}=\{*\}$, and its morphisms are the
identities and the unique map $!\colon\mathbf{2}\to\mathbf{1}$. Thus the label
can be discarded and cannot be relabelled. Indeed the involution
$y \mapsto 1-y$ carries $P_{\mathbf{2},d}(\,\cdot\mid\beta)$ to
$P_{\mathbf{2},d}(\,\cdot\mid -\beta)$, hence it does not
satisfy~\eqref{eq:NV}. It is an equivariance in the sense of
Remark~\ref{cav:scaleV}.

\item \textbf{Sample spaces.} $\Gamma(v,u)=v^{u}$ with the discrete
$\sigma$-algebra, so that $\Gamma(\mathbf{2},u)=\{0,1\}^{u}$ is the space of
the labels and $\Gamma(\mathbf{1},u)$ is a point. The morphism $\varphi^{\op}$
is mapped to the restriction.

\item \textbf{Designs.} An object $(u,\omega,x)$ of $\catD$ is the feature
matrix. The labels are random and appear in $\Gamma$, not in $\catD$, i.e.\ the
model is conditional on the features.

\item \textbf{Parameters.} $\catK = \Vect_{\R}^{\mathrm{fd}}$, $U_{\catK}$ is
the underlying-set functor, $\Theta(\omega)=\omega^{*}$ and
$\Theta(\psi)(\beta') = \beta'\circ\psi$ for $\beta' \in \omega'^{*}$.
\end{enumerate}

\noindent Let $\expit(t)=(1+e^{-t})^{-1}$. For $d = (u,\omega,x)$ and
$\beta \in \Theta(\omega) = \omega^{*}$ we set
\[
P_{\mathbf{2},d}(\,\cdot\mid\beta)
\;\coloneqq\; \bigotimes_{i\in u}\Bern\!\bigl(\expit(\beta(x(i)))\bigr)
\quad\text{on }\{0,1\}^{u},
\qquad
P_{\mathbf{1},d}(\,\cdot\mid\beta) \;\coloneqq\; \delta \ \text{on } \{*\}^{u},
\]
where $\delta$ is the unique probability measure on the point $\{*\}^{u}$.
\end{example}

\begin{lemma}\label{lem:logreg-model}
The family $P$ of Example~\ref{ex:logreg} is a statistical model.
\end{lemma}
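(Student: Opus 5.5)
We have to verify the two families of naturality squares: \eqref{eq:ND} for every morphism $f=(\varphi,\psi)$ of $\catD$ at each response scale $v\in\{\mathbf{2},\mathbf{1}\}$, and \eqref{eq:NV} for the single non-identity morphism $!\colon\mathbf{2}\to\mathbf{1}$ of $\catV$. By functoriality of $L$ and $R$, a general morphism $(\alpha,f^{\op})$ of $\catV\times\catD^{\op}$ factors as $(\alpha,\id)\circ(\id,f^{\op})$. It therefore suffices to check these two kinds of squares separately, since pasting commuting squares gives a commuting square.

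For \eqref{eq:ND} at $v=\mathbf{2}$ we follow the proof of Lemma~\ref{lem:oneway-model}. Take $\beta'\in\omega'^{*}$. Then $\Theta(\psi)\beta'=\beta'\circ\psi$, so
\[
P_{\mathbf{2},d}(\,\cdot\mid\Theta(\psi)\beta')=\bigotimes_{i\in u}\Bern\bigl(\expit(\beta'(\psi(x(i))))\bigr).
\]
On the other side, since $\varphi$ is injective, the pushforward of $P_{\mathbf{2},d'}(\,\cdot\mid\beta')$ along $\res_\varphi\colon\{0,1\}^{u'}\to\{0,1\}^{u}$ is the marginal on the coordinates $\varphi(u)$. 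Because the factors are independent, this marginal is $\bigotimes_{i\in u}\Bern(\expit(\beta'(x'(\varphi(i)))))$. The two products agree factor by factor by \eqref{eq:commacond}. Since all spaces are finite and discrete, it is enough to compare the point masses, which are finite products.

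At $v=\mathbf{1}$, both sides of \eqref{eq:ND} are probability measures on the one-point space $\{*\}^{u}$, so they coincide trivially. The same observation settles \eqref{eq:NV} for $!$. The map $!^{u}\colon\{0,1\}^{u}\to\{*\}^{u}$ pushes any probability measure to the unique one $\delta$, so $\Prb(!^{u})\circ P_{\mathbf{2},d}=P_{\mathbf{1},d}$ at every $\beta$.

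No step is a real obstacle. The only point needing care is the reduction to the two kinds of squares, together with the fact that injectivity of $\varphi$ is what makes the pushforward along $\res_\varphi$ a marginal of independent coordinates rather than a measure on a diagonal. This is the same fact remarked before the examples.
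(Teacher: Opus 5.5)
Your proposal is correct and follows the paper's proof. The paper checks \eqref{eq:ND} at $\mathbf{2}$ through the marginal of the independent product and \eqref{eq:commacond}, and settles the $\mathbf{1}$-case and \eqref{eq:NV} at $!$ because $\{*\}^{u}$ is a single point. Your factorisation $(\alpha,f^{\op})=(\alpha,\id)\circ(\id,f^{\op})$ only makes explicit the pasting that the paper leaves implicit when it checks the two kinds of squares separately.
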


\begin{proof}
For~\eqref{eq:ND} at $v=\mathbf{2}$ the proof is the same as that of
Lemma~\ref{lem:oneway-model}. Indeed, the marginal onto $\varphi(u)$ and the
model at the pulled-back parameter are
\[
\bigotimes_{i\in u}\Bern\bigl(\expit(\beta'(x'(\varphi(i))))\bigr)
\qquad\text{and}\qquad
\bigotimes_{i\in u}\Bern\bigl(\expit((\beta'\circ\psi)(x(i)))\bigr),
\]
which coincide by~\eqref{eq:commacond}. At $v=\mathbf{1}$ both sides are the
unique probability measure on a point. For~\eqref{eq:NV} at
$!\colon\mathbf{2}\to\mathbf{1}$, the pushforward of
$P_{\mathbf{2},d}(\,\cdot\mid\beta)$ along
$!^{u}\colon\{0,1\}^{u}\to\{*\}^{u}$ is the Dirac measure, which is
$P_{\mathbf{1},d}(\,\cdot\mid\beta)$ for every $\beta$.
\end{proof}

\subsubsection{Non-examples}

The two families below are well-defined and indexed by the designs, but they
are not statistical models. In both cases condition (N-$\catU$) fails.

\begin{example}\label{ex:centring}
With the data of Example~\ref{ex:linreg} we set
\[
\widetilde{P}_{\R,d}(\,\cdot\mid\beta,\sigma^{2})
\coloneqq \bigotimes_{i\in u}\Norm(\beta(x(i)-\bar{x}_{d}),\ \sigma^{2}),
\qquad
\bar{x}_{d} = \frac{1}{|u|}\sum_{i\in u} x(i).
\]
Then~\eqref{eq:ND} fails already for $\psi=\id$ and $\varphi$ a proper
inclusion. Indeed $\bar{x}_{d} \neq \bar{x}_{d'}$ in general, so that the mean
of a given unit changes when other units are removed. The mean $\bar{x}_{d}$ is
a statistic of the design, and the failure comes from its use in the definition
of the family. The same mechanism underlies the non-example of Tjur (see
Example~\ref{ex:alphabetaxbar}).
\end{example}

\begin{example}\label{ex:softmax}
With the data of Example~\ref{ex:logreg} we set
\[
\widetilde{P}_{\mathbf{2},d}(\,\cdot\mid\beta)
\coloneqq \bigotimes_{i\in u}\Bern\bigl(s_{i}(\beta,d)\bigr),
\qquad
s_{i} \;=\; \frac{e^{\beta(x(i))}}{\sum_{j\in u}e^{\beta(x(j))}},
\]
i.e.\ a softmax taken across the sample and not across the label set.
Then~\eqref{eq:ND} fails as in Example~\ref{ex:centring}, since removing a unit
rescales all the remaining probabilities. Normalisation across the sample is a
common operation in learning pipelines.
\end{example}

\begin{remark}\label{rem:slots}
Summing up, condition (N-$\catU$) says that the marginal of the model on a
subsample is the model of the subsample, and it excludes the normalisations
which depend on the sample, as in Examples~\ref{ex:centring}
and~\ref{ex:softmax}. Condition (N-$\catO$) says that the parameter is pulled
back along the maps of covariate spaces, and it excludes the
reparametrisations which depend on the design. Condition (N-$\catV$) expresses
invariance at a fixed value of the parameter, and it excludes the relabelling
and the rescaling of the response, which are equivariances (see
Remark~\ref{cav:scaleV}).
\end{remark}
\section{Markov-categorical background}\label{sec:markov}

We recall the few notions on Markov categories that we need. We refer the
reader to~\cite{fritz_synthetic_2020,perrone_markov_2024} for the general
theory.

\subsection{The Giry monad}

Let $X=(X,\Sigma_X)$ be a measurable space. We denote by $\G X$ the set of the
probability measures on $X$, endowed with the smallest $\sigma$-algebra such
that the evaluation maps
\[
\operatorname{ev}_A \colon \G X\to [0,1],
\qquad
\operatorname{ev}_A(\pi)=\pi(A),
\]
are measurable for all $A\in\Sigma_X$. Let $f \colon X\to Y$ be a measurable
map. We denote by $\G f \colon \G X\to \G Y$ the pushforward along $f$, i.e.
\[
(\G f)(\pi)(B) \;=\; (f_\ast\pi)(B) \;=\; \pi\bigl(f^{-1}(B)\bigr)
\]
for all $\pi \in \G X$ and $B\in\Sigma_Y$. In this way we obtain a functor
$\G \colon \Meas\to\Meas$. Let
\[
\eta_X \colon X\to \G X,
\qquad
\eta_X(x)=\delta_x,
\]
where $\delta_x$ is the Dirac measure at $x$, and let
\[
\mu_X \colon \G\G X\to\G X,
\qquad
\mu_X(\Xi)(A) = \int_{\G X}\nu(A)\,\Xi(\mathrm{d}\nu),
\qquad
A\in\Sigma_X.
\]
Thus $\mu_X$ maps a probability measure on the space of the probability
measures on $X$ to its mixture. Equivalently,
\[
\int_X h(x)\,\mu_X(\Xi)(\mathrm{d}x)
=
\int_{\G X}
\Bigl(
\int_X h(x)\,\nu(\mathrm{d}x)
\Bigr)\,
\Xi(\mathrm{d}\nu)
\]
for every bounded measurable function $h \colon X\to\R$. The triple
$(\G,\eta,\mu)$ is a monad on $\Meas$, called the \emph{Giry monad}
(see~\cite{giry_categorical_1982}).

\subsection{The category \texorpdfstring{$\Stoch$}{Stoch}}

A Kleisli morphism of the Giry monad from $X$ to $Y$ is a measurable map
$k \colon X\to\G Y$. Equivalently, it is a \emph{Markov kernel}, i.e.\ a map
$k \colon X\times\Sigma_Y\to[0,1]$ such that $B\mapsto k(x)(B)$ is a
probability measure on $Y$ for all $x\in X$, and $x\mapsto k(x)(B)$ is
measurable for all $B\in\Sigma_Y$. The composition of $k \colon X\to\G Y$ and
$k' \colon Y\to\G Z$ is given by the Chapman--Kolmogorov formula
\begin{equation}\label{eq:CK}
(k'\circ k)(x)(C)
=
\int_Y k'(y)(C)\,k(x)(\mathrm{d}y),\qquad C\in\Sigma_Z.
\end{equation}

\begin{definition}\label{def:stoch}
The Kleisli category of the Giry monad is denoted by $\Stoch$. Its objects are
the measurable spaces and its morphisms $f \colon X \rightsquigarrow Y$ are
the Markov kernels, with the composition~\eqref{eq:CK}
(see~\cite{fritz_synthetic_2020,giry_categorical_1982,perrone_markov_2024}).
We denote by $\FinStoch$ the full subcategory of $\Stoch$ of the finite
discrete measurable spaces. Its morphisms are the stochastic matrices.
\end{definition}

We write $\cat{A}(B,C)$ for the set of the morphisms from $B$ to $C$ in a
category $\cat{A}$. By definition there is a bijection
\begin{equation}\label{eq:kleisli}
\Stoch(X, Y) \;\cong\; \Meas\bigl(X, \G Y\bigr),
\end{equation}
which is natural in $X$, via the precomposition with measurable maps, and in
$Y$, via the pushforward.

The category $\Stoch$ is a \emph{Markov category}, i.e.\ it is symmetric
monoidal under the product of measurable spaces and it has copy and discard
maps at every object, with naturality imposed on the discard maps
alone~\cite[Def.~2.1]{fritz_synthetic_2020}. We write
$\mathrm{copy}_{X} \colon X \to X \otimes X$ for the copy map at $X$.

\begin{definition}\label{def:deterministic}
A morphism $f \colon X \to Y$ of a Markov category is called
\emph{deterministic} if it is a homomorphism of the copy comonoids, i.e.\ if
\[
\mathrm{copy}_{Y} \circ f \;=\; (f \otimes f) \circ \mathrm{copy}_{X}
\]
(see~\cite[Def.~10.1]{fritz_synthetic_2020}).
\end{definition}

Thus $f$ is deterministic if copying its output is the same as applying $f$
independently to two copies of the input. In $\Stoch$ this means that
$f(x)(B) \in \{0,1\}$ for every $x \in X$ and every $B \in \Sigma_{Y}$, i.e.\
that each $f(x)$ is a $\{0,1\}$-valued
measure~\cite[Ex.~10.4]{fritz_synthetic_2020}. In $\FinStoch$ it means that
the entries of the stochastic matrix of $f$ belong to $\{0,1\}$. The
deterministic morphisms contain the identities and are closed under
composition and under $\otimes$. Hence they form a wide symmetric monoidal
subcategory $\Stoch_{\det} \subseteq \Stoch$.

\begin{definition}\label{def:dirac}
The \emph{Dirac embedding} $\del \colon \Meas \to \Stoch$ is the functor which
is the identity on the objects and maps a measurable map $f$ to the kernel
$x \mapsto \delta_{f(x)}$, called the \emph{Dirac kernel} of $f$. It is strong
monoidal and its image is contained in $\Stoch_{\det}$, since a Dirac kernel is
$\{0,1\}$-valued.
\end{definition}

\begin{lemma}\label{lem:dirac}
Let $Y$ be a measurable space.
\begin{enumerate}
\item The functor $\del$ is injective on $\Meas(X,Y)$ for all $X$ if and only
if $\Sigma_{Y}$ separates points. This holds when $Y$ is standard Borel,
discrete or finite, and it does not hold for all $Y$.
\item If $\Sigma_{Y}$ is countably generated and separates points, in
particular if $Y$ is standard Borel, then every deterministic morphism
$X \rightsquigarrow Y$ is the Dirac kernel of a measurable map. This always
holds in $\FinStoch$.
\item Suppose that the codomains of the measurable maps $f_{j}$ separate
points. Then a diagram of Dirac kernels $\del f_{j}$ commutes in $\Stoch$ if
and only if the diagram of the $f_{j}$ commutes in $\Meas$.
\end{enumerate}
\end{lemma}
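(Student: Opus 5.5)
The plan is to prove the three parts of Lemma~\ref{lem:dirac} in order, using only the definitions of $\G$, of $\del$ and the characterisation of deterministic morphisms of $\Stoch$ as kernels with $\{0,1\}$-valued measures.

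\textbf{Part (1).} I first compare Dirac measures. For $y,y' \in Y$ we have $\delta_{y} = \delta_{y'}$ if and only if no $B \in \Sigma_{Y}$ contains exactly one of them. If $\Sigma_{Y}$ separates points, then $\delta_{f(x)} = \delta_{g(x)}$ for all $x$ forces $f = g$, so $\del$ is injective. Conversely, suppose $y \neq y'$ are not separated. Take $X = \{*\}$ and let $f$ and $g$ be the maps with values $y$ and $y'$. Then $\del f = \del g$ while $f \neq g$. For the examples, a standard Borel space, a discrete space and a finite discrete space all separate points, because singletons are measurable. A two-point set with the trivial $\sigma$-algebra shows that injectivity can fail.

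\textbf{Part (2).} This is the main step. Let $k \colon X \rightsquigarrow Y$ be deterministic, so each $k(x)$ is a $\{0,1\}$-valued probability measure. Fix a countable generating family $(B_n)$ that separates points. The set $\bigcap_n C_n$, with $C_n = B_n$ if $k(x)(B_n) = 1$ and $C_n = Y \setminus B_n$ otherwise, has measure $1$ by countable additivity. It contains at most one point by separation, and since its measure is $1$ it is not empty, so it is a single point $f(x)$.

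The class of sets $B \in \Sigma_Y$ with $k(x)(B) = \delta_{f(x)}(B)$ is a Dynkin system that contains the $B_n$ and their complements. I would pass to the $\pi$-system of finite intersections of the $B_n$ and their complements, where the equality holds because both measures are $\{0,1\}$-valued and agree on every generator. The $\pi$--$\lambda$ theorem then gives $k(x) = \delta_{f(x)}$.

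Measurability of $f$ follows from $f^{-1}(B_n) = \{x : k(x)(B_n) = 1\}$, which is measurable because $k$ is a kernel. The sets $B$ with $f^{-1}(B)$ measurable form a $\sigma$-algebra containing the generators, hence all of $\Sigma_Y$. In $\FinStoch$ the singletons give a finite separating generating family, so the claim holds there.

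The only delicate point is the nonemptiness of the atom. It needs countability, which is exactly why the hypothesis is imposed.

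\textbf{Part (3).} Because $\del$ is a functor, a composite of Dirac kernels along a path is the Dirac kernel of the composite map. Two paths in the diagram therefore commute in $\Stoch$ if and only if $\del$ of the two composites agree. The two composites share the common codomain of the last map, which separates points. By part~(1) the Dirac kernels agree if and only if the composite maps agree. The converse direction is immediate from functoriality.
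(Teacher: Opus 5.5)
Your proof is correct and follows the paper's route in all three parts: a one-point domain for the converse in (1), the atom $\bigcap_n C_n$ for (2), and functoriality of $\del$ combined with part~(1) for (3). The one step you leave implicit is that any countable generating family of a point-separating $\Sigma_Y$ itself separates points. The paper proves this by noting that the sets containing both or neither of $y,y'$ form a $\sigma$-algebra. Your $\pi$--$\lambda$ step can also be dropped: $\bigcap_n C_n=\{f(x)\}$ is measurable with $k(x)(\{f(x)\})=1$, which already gives $k(x)=\delta_{f(x)}$.
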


\begin{proof}
We have $\del f = \del g$ if and only if $\delta_{f(x)} = \delta_{g(x)}$ for
all $x$, and $\delta_{y} = \delta_{y'}$ implies $y = y'$ for all $y, y'$ if and
only if $\Sigma_{Y}$ separates points (for the converse take $X$ a point).
This gives (1), and (3) follows from
(1) since $\del$ is a functor.

For (2), let $(B_n)_{n}$ generate $\Sigma_Y$ and let $\nu$ be a
$\{0,1\}$-valued probability measure on $Y$. We set $C_n = B_n$ if
$\nu(B_n) = 1$ and $C_n = Y \setminus B_n$ otherwise, and
$C = \bigcap_n C_n$. Then $\nu(C) = 1$, hence $C \neq \emptyset$. Let
$y, y' \in C$. The class of the sets $B \in \Sigma_Y$ which contain both or
neither of $y, y'$ is a $\sigma$-algebra which contains every $B_n$, hence it
is $\Sigma_Y$, and $y = y'$ because $\Sigma_Y$ separates points. Thus
$C = \{y\}$ and $\nu = \delta_y$. Let now $k \colon X \rightsquigarrow Y$ be
deterministic. Each $k(x)$ is
$\{0,1\}$-valued~\cite[Ex.~10.4]{fritz_synthetic_2020}, hence
$k(x) = \delta_{f(x)}$ for a unique $f(x) \in Y$. The map $f$ is measurable
because $f^{-1}(B) = \{x : k(x)(B) = 1\} \in \Sigma_X$ for every
$B \in \Sigma_Y$, and $k = \del f$.
\end{proof}

\begin{remark}\label{rem:dirac-hyp}
The property of Lemma~\ref{lem:dirac}(2) says that the $\{0,1\}$-valued
probability measures on $Y$ are the points of $Y$. Conditions~(a) and~(b) of
Appendix~\ref{sec:Bref} are the two instances of this requirement which are
used in Proposition~\ref{wprop:copower}.
\end{remark}
\section{The bridge}\label{sec:bridge}

In this section we prove Theorem~\ref{thm:main}. By~\eqref{eq:kleisli} a
component $P_{v,d} \colon U_\catK\Theta(\omega) \to \Prb(\Gamma(v,u))$ of a
statistical model gives a morphism
$\Theta(\omega) \rightsquigarrow \Gamma(v,u)$ of $\Stoch$ as soon as it is
measurable in $\theta$. This condition is empty in the finite case, which we
consider first.

\subsection{Finite case}\label{sec:finite}

Suppose that the design data of Definition~\ref{def:designdata} are
\emph{finite}, i.e.\ that the objects of $\catV$, $\catO$ and $\catU$ have
finite underlying sets, that the spaces $\Gamma(v,u)$ are finite and discrete,
and that $U_\catK\Theta(\omega)$ is finite for every $\omega$. We endow each
$U_\catK\Theta(\omega)$ with the discrete $\sigma$-algebra, so that
$U_\catK\Theta$ lifts uniquely to a functor
$\Theta_\Meas \colon \catO^{\op} \to \Meas$ such that
$U_\catK \Theta = |\!-\!| \circ \Theta_\Meas$, where
$|\!-\!| \colon \Meas\to\Set$ is the forgetful functor. We set
\[
\mathbf{L} \coloneqq \del \circ \Theta_\Meas \circ \pr_\catO^{\op} \circ \pr_2,
\qquad
\mathbf{R} \coloneqq \del \circ \Gamma \circ (\id_\catV \times \pr_\catU^{\op}),
\]
where $\del$ is the Dirac embedding of Definition~\ref{def:dirac}. These are
two functors $\catV \times \catD^{\op} \to \FinStoch$.

\begin{theorem}\label{thm:bridge-finite}
Suppose that the design data are finite. For a statistical model
$P \colon L \Rightarrow R$ let $\mathbf{P}_{v,d}$ be the kernel
$\theta \mapsto P_{v,d}(\,\cdot \mid \theta)$. Then $P \mapsto \mathbf{P}$ is a
bijection between
\begin{enumerate}
\item the statistical models $P \colon L \Rightarrow R$ of
Definition~\ref{def:mcbmodel};
\item the natural transformations
$\mathbf{P}\colon \mathbf{L} \Rightarrow \mathbf{R}$, i.e.\ the families of
morphisms
$\mathbf{P}_{v,d} \colon \mathbf{L}(v,d) \rightsquigarrow \mathbf{R}(v,d)$ of
$\FinStoch$, indexed by the objects of $\catV\times\catD^{\op}$, such that the
square
\begin{equation}\label{eq:natsquare}
\begin{tikzcd}[column sep=3em]
\mathbf{L}(v,d) \arrow[r, rightsquigarrow, "\mathbf{P}_{v,d}"]
\arrow[d, "\mathbf{L}(m)"']
& \mathbf{R}(v,d) \arrow[d, "\mathbf{R}(m)"] \\
\mathbf{L}(v',d') \arrow[r, rightsquigarrow, "\mathbf{P}_{v',d'}"']
& \mathbf{R}(v',d')
\end{tikzcd}
\end{equation}
commutes in $\FinStoch$ for every morphism $m \colon (v,d) \to (v',d')$ of
$\catV \times \catD^{\op}$.
\end{enumerate}
The morphisms $\mathbf{L}(m)$ and $\mathbf{R}(m)$ are Dirac kernels, hence
deterministic. The components $\mathbf{P}_{v,d}$ are arbitrary stochastic
matrices, and $\mathbf{P}_{v,d}$ is deterministic if and only if
$P_{v,d}(\,\cdot \mid \theta)$ is a Dirac measure for every $\theta$.
\end{theorem}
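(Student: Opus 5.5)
The proof is a transposition along the Kleisli bijection~\eqref{eq:kleisli}, applied componentwise, followed by a check that naturality in $\Set$ and naturality in $\FinStoch$ are the same condition.

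\emph{Components.} Since $\Theta_\Meas(\omega)$ is finite and discrete, every map $U_\catK\Theta(\omega) \to \Prb(\Gamma(v,u))$ is measurable into $\G\Gamma(v,u)$. Hence~\eqref{eq:kleisli} turns each component $P_{v,d}$ into a unique stochastic matrix $\mathbf{P}_{v,d}$, with entries $\mathbf{P}_{v,d}(\theta,y) = P_{v,d}(\{y\}\mid\theta)$. Conversely, each stochastic matrix gives back a map into $\Prb(\Gamma(v,u))$. So on families of components the assignment $P\mapsto\mathbf{P}$ is already a bijection, and the functors $\mathbf{L},\mathbf{R}$ are well defined into $\FinStoch$ by construction.

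\emph{Naturality.} It remains to show that, for every $m$, the square~\eqref{eq:natsquare} commutes if and only if the corresponding square of $P$ commutes, namely~\eqref{eq:ND} together with~\eqref{eq:NV}. I would compute both composites with the Chapman--Kolmogorov formula~\eqref{eq:CK}. Precomposing a kernel $k$ with a Dirac kernel $\del f$ gives $k\circ\del f = k\circ f$, which is reparametrisation. Postcomposing with $\del g$ gives $\del g\circ k = \G g\circ k$, which is pushforward. With these two rules, the composite through $\mathbf{L}(v',d')$ in~\eqref{eq:natsquare} is $\theta \mapsto P_{v',d'}(\cdot\mid \Theta(\psi)\theta)$, and the composite through $\mathbf{R}(v,d)$ is $\theta\mapsto \Prb(\Gamma(m))P_{v,d}(\cdot\mid\theta)$. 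These are exactly the two paths of the naturality square of $P$. Equality of kernels means equality of these functions into $\Prb$, so the two naturality conditions coincide. Equivalently, this is the naturality of~\eqref{eq:kleisli} in $X$ under precomposition and in $Y$ under pushforward.

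The only point needing care is bookkeeping the variance: $\Theta$ is contravariant on $\catO$, $\Gamma$ is contravariant on $\catU$, and both enter through $\catV\times\catD^{\op}$. I would write out the square for a general $m=(\alpha,f^{\op})$ and check that the arrows point the same way as in~\eqref{eq:ND}. I expect this to be the main obstacle, though it is routine.

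\emph{Final claims.} $\mathbf{L}(m)$ and $\mathbf{R}(m)$ are in the image of $\del$, hence deterministic by Definition~\ref{def:dirac}. The components are arbitrary stochastic matrices by the bijection on components. For the last claim, a stochastic matrix is deterministic iff its entries lie in $\{0,1\}$ (the remark after Definition~\ref{def:deterministic}). A row with $0/1$ entries summing to $1$ is a Dirac measure, so $\mathbf{P}_{v,d}$ is deterministic iff every $P_{v,d}(\cdot\mid\theta)$ is Dirac; alternatively, apply Lemma~\ref{lem:dirac}(2).
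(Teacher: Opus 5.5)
Your proposal is correct and follows the paper's proof essentially step by step. The paper likewise transposes each component along the Kleisli bijection, using the discreteness of $\Theta_\Meas(\omega)$ for measurability, and proves the same two rules: precomposition with $\del h$ is reparametrisation and postcomposition with $\del g$ is pushforward. It then concludes that the two naturality squares at $m = (\alpha,(\varphi,\psi)^{\op})$ correspond under this bijection, and your $\{0,1\}$-row argument for the last claim is equivalent to the paper's observation that $\mathbf{P}_{v,d}$ lies in the image of $\del$ exactly when every $P_{v,d}(\,\cdot\mid\theta)$ is a Dirac measure.
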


\begin{proof}
Let $d = (u,\omega,x)$. Since $\del$ is the identity on the objects we have
$\mathbf{L}(v,d) = \Theta_\Meas(\omega)$ and $\mathbf{R}(v,d) = \Gamma(v,u)$,
which are finite and discrete. Recall that $U_\catK\Theta(\omega)$ is the
underlying set of $\Theta_\Meas(\omega)$ and that $\Prb(S)$ is the underlying
set of $\G S$. Since $\Theta_\Meas(\omega)$ is discrete, every map from it is
measurable, hence
\[
\Set\bigl(U_\catK\Theta(\omega), \Prb(\Gamma(v,u))\bigr)
= \Meas\bigl(\Theta_\Meas(\omega), \G\Gamma(v,u)\bigr).
\]
Composing with~\eqref{eq:kleisli} we obtain a bijection
\begin{equation}\label{eq:transpose}
\Set\bigl(U_\catK\Theta(\omega),\, \Prb(\Gamma(v,u))\bigr)
\;\xrightarrow{\ \cong\ }\;
\FinStoch\bigl(\Theta_\Meas(\omega),\, \Gamma(v,u)\bigr),
\qquad P_{v,d} \longmapsto \mathbf{P}_{v,d}.
\end{equation}

Let $k \colon X \rightsquigarrow Y$ be a kernel and let $g \colon Y \to Y'$ be
a measurable map. By~\eqref{eq:CK} we have
\begin{equation}\label{eq:push}
\bigl(\del(g) \circ k\bigr)(x)(B)
= \int \delta_{g(y)}(B)\, k(x)(\mathrm{d}y)
= k(x)\bigl(g^{-1}B\bigr)
= \bigl(\G(g) \circ k\bigr)(x)(B),
\end{equation}
i.e.\ the composition with $\del(g)$ corresponds under~\eqref{eq:transpose} to
the composition with the pushforward $\Prb(g) = \G(g)$. Let now
$h \colon X' \to X$ be a measurable map. Then
\begin{equation}\label{eq:pull}
\bigl(k \circ \del(h)\bigr)(x')(B)
= \int k(x)(B)\, \delta_{h(x')}(\mathrm{d}x)
= k\bigl(h(x')\bigr)(B),
\end{equation}
i.e.\ the precomposition with $\del(h)$ is the precomposition with $h$.

Let $m = (\alpha, (\varphi,\psi)^{\op})$ be a morphism of
$\catV \times \catD^{\op}$ with target $(v',d')$, where $d' = (u',\omega',x')$,
so that $(\varphi,\psi) \colon d' \to d$ is a morphism of $\catD$ with
$\varphi \colon u' \to u$ and $\psi \colon \omega' \to \omega$. We have
$L(m) = U_\catK\Theta(\psi)$ and
$R(m) = \Prb\bigl(\Gamma(\alpha,\varphi)\bigr)$, while
\[
\mathbf{L}(m) = \del\bigl(\Theta_\Meas(\psi)\bigr),
\qquad
\mathbf{R}(m) = \del\bigl(\Gamma(\alpha,\varphi)\bigr).
\]
The naturality square of $P$ at $m$ is the equality
\[
\Prb\bigl(\Gamma(\alpha,\varphi)\bigr) \circ P_{v,d}
\;=\; P_{v',d'} \circ U_\catK\Theta(\psi)
\]
of maps $U_\catK\Theta(\omega) \to \Prb(\Gamma(v',u'))$. By~\eqref{eq:push}
the left hand side corresponds under~\eqref{eq:transpose} to
$\mathbf{R}(m) \circ \mathbf{P}_{v,d}$, and by~\eqref{eq:pull} the right hand
side corresponds to $\mathbf{P}_{v',d'} \circ \mathbf{L}(m)$.
Since~\eqref{eq:transpose} is a bijection, the naturality square of $P$ at $m$
commutes if and only if~\eqref{eq:natsquare} commutes. Hence
$P \mapsto \mathbf{P}$ is a bijection between the statistical models and the
natural transformations $\mathbf{L} \Rightarrow \mathbf{R}$.

The morphisms $\mathbf{L}(m)$ and $\mathbf{R}(m)$ are Dirac kernels, hence
they are deterministic by Definition~\ref{def:dirac}. Finally
$\mathbf{P}_{v,d}$ belongs to the image of $\del$ if and only if
$P_{v,d}(\,\cdot \mid \theta)$ is a Dirac measure for every $\theta$, and the
theorem is proved.
\end{proof}

\begin{remark}\label{rem:bridge-finite}
The discreteness of the parameter object is used only to
obtain~\eqref{eq:transpose}, while the finiteness of the spaces is used only to
have $\mathbf{L}$, $\mathbf{R}$ and $\mathbf{P}$ in $\FinStoch$. The equality
which gives~\eqref{eq:transpose} is the bijection of the adjunction
$D \dashv U$ of Proposition~\ref{prop:coreflection} at the pair
$\bigl(U_\catK\Theta(\omega), \G\Gamma(v,u)\bigr)$. By
Theorem~\ref{thm:bridge-finite} all the notions available in $\FinStoch$ apply
to the finite statistical models.
\end{remark}

\subsection{Measurable case}

\begin{definition}\label{def:measurable-model}
Suppose that the functor $U_\catK\Theta \colon \catO^{\op} \to \Set$ lifts to a
functor $\Theta_\Meas \colon \catO^{\op} \to \Meas$, i.e.\ that
$|\!-\!| \circ \Theta_\Meas = U_\catK \circ \Theta$. This means that each
parameter object is endowed with a measurable structure and that the
reparametrisation maps $\Theta(\psi)$ are measurable. A statistical model
$P \colon L \Rightarrow R$ is called \emph{measurable} if each component
$P_{v,d}$ is measurable as a map $\Theta_\Meas(\omega) \to \G \Gamma(v,u)$,
i.e.\ if $\theta \mapsto P_{v,d}(A \mid \theta)$ is measurable for every
measurable subset $A$ of $\Gamma(v,u)$.
\end{definition}

Let $\mathbf{L}, \mathbf{R} \colon \catV \times \catD^{\op} \to \Stoch$ be
given as in the finite case, i.e.\
$\mathbf{L} = \del \circ \Theta_\Meas \circ \pr_\catO^{\op} \circ \pr_2$ and
$\mathbf{R} = \del \circ \Gamma \circ (\id_\catV \times \pr_\catU^{\op})$.

\begin{theorem}\label{thm:bridge-meas}
Let $\Theta_\Meas$ be as in Definition~\ref{def:measurable-model}. Then
$P \mapsto \mathbf{P}$, where $\mathbf{P}_{v,d}$ is the kernel
$\theta \mapsto P_{v,d}(\,\cdot \mid \theta)$, is a bijection between the
measurable statistical models and the natural transformations
$\mathbf{P} \colon \mathbf{L} \Rightarrow \mathbf{R}$ of functors
$\catV \times \catD^{\op} \to \Stoch$, i.e.\ the families of morphisms
$\mathbf{P}_{v,d} \colon \mathbf{L}(v,d) \rightsquigarrow \mathbf{R}(v,d)$ of
$\Stoch$ such that the square~\eqref{eq:natsquare} commutes in $\Stoch$ for
every morphism $m$. The morphisms $\mathbf{L}(m)$ and $\mathbf{R}(m)$ are
Dirac kernels, hence deterministic, for every morphism $m$. The components
$\mathbf{P}_{v,d}$ need not be deterministic.
\end{theorem}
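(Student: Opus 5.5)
The plan is to repeat the proof of Theorem~\ref{thm:bridge-finite}, with the discreteness of the parameter objects replaced by the measurability hypothesis of Definition~\ref{def:measurable-model}.

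First, at each object $(v,d)$ with $d=(u,\omega,x)$ we have $\mathbf{L}(v,d)=\Theta_\Meas(\omega)$ and $\mathbf{R}(v,d)=\Gamma(v,u)$. The maps $U_\catK\Theta(\omega)\to\Prb(\Gamma(v,u))$ that are measurable as maps $\Theta_\Meas(\omega)\to\G\Gamma(v,u)$ form exactly the set $\Meas(\Theta_\Meas(\omega),\G\Gamma(v,u))$. The $\sigma$-algebra on $\G$ is generated by the evaluations $\operatorname{ev}_A$, so measurability into $\G\Gamma(v,u)$ is equivalent to the measurability of each $\theta\mapsto P_{v,d}(A\mid\theta)$, which is the condition of Definition~\ref{def:measurable-model}. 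Composing with~\eqref{eq:kleisli} gives a bijection
\[
\Meas\bigl(\Theta_\Meas(\omega),\G\Gamma(v,u)\bigr)\cong\Stoch\bigl(\Theta_\Meas(\omega),\Gamma(v,u)\bigr),\qquad P_{v,d}\mapsto\mathbf{P}_{v,d}.
\]
This bijection is injective on the underlying families of maps. Hence $P\mapsto\mathbf{P}$ is injective on families of measurable components, and every family of kernels comes from a unique family of measurable maps.

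Next I transport the naturality squares. For $m=(\alpha,(\varphi,\psi)^{\op})$, the naturality of $P$ at $m$ is the equality $\Prb(\Gamma(\alpha,\varphi))\circ P_{v,d}=P_{v',d'}\circ U_\catK\Theta(\psi)$ of maps of sets. The computations~\eqref{eq:push} and~\eqref{eq:pull} hold verbatim for arbitrary measurable $g$ and $h$, since they only use~\eqref{eq:CK} and the definition of the Dirac kernel. So under the bijection above, the left-hand side corresponds to $\mathbf{R}(m)\circ\mathbf{P}_{v,d}$ and the right-hand side to $\mathbf{P}_{v',d'}\circ\mathbf{L}(m)$.

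Here $\Theta_\Meas(\psi)$ and $\Gamma(\alpha,\varphi)$ are measurable by hypothesis, so both composites are again measurable maps $\Theta_\Meas(\omega)\to\G\Gamma(v',u')$. Equality of kernels in $\Stoch$ is pointwise equality of the measures $k(\theta)$, which is the same as equality of the underlying maps into $\Prb$. Therefore the square of $P$ commutes if and only if~\eqref{eq:natsquare} commutes in $\Stoch$. This gives the bijection between measurable models and natural transformations $\mathbf{L}\Rightarrow\mathbf{R}$.

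Finally, $\mathbf{L}(m)$ and $\mathbf{R}(m)$ lie in the image of $\del$, so they are deterministic by Definition~\ref{def:dirac}. For the last claim it suffices to exhibit a nondeterministic component, for instance the Gaussian one-way layout of Lemma~\ref{lem:oneway-model}. Its components are measurable in $(\mu,\sigma^2)$ because the Gaussian densities are jointly measurable, by Fubini. Its values $\Norm(\mu,\sigma^2)$ are not $\{0,1\}$-valued.

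The main obstacle is a point of care rather than depth: not to invoke Lemma~\ref{lem:dirac}, whose separation hypotheses may fail here. The proof should only ever compare the maps $\theta\mapsto k(\theta)$ into $\G$ directly, and never pass from Dirac kernels back to measurable maps.
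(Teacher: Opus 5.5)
Your proposal is correct and follows essentially the paper's proof. Both transport the naturality squares through the bijection~\eqref{eq:kleisli} using~\eqref{eq:push} and~\eqref{eq:pull}, which need no finiteness, with the measurability hypothesis taking the place of the discreteness of the parameter objects; your explicit Gaussian witness for the non-deterministic components (for a design with $u\neq\emptyset$) is a harmless addition that the paper leaves to Example~\ref{ex:oneway-cont}.
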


\begin{proof}
The bijection~\eqref{eq:kleisli} and its naturality hold for arbitrary
measurable spaces, and $P_{v,d}$ belongs to
$\Meas\bigl(\Theta_\Meas(\omega),\,\G\Gamma(v,u)\bigr)$ by hypothesis. The
proof then goes as the one of Theorem~\ref{thm:bridge-finite},
since~\eqref{eq:push} and~\eqref{eq:pull} do not use finiteness.
\end{proof}

\begin{definition}\label{def:bridged}
The natural transformation $\mathbf{P}$ of Theorem~\ref{thm:bridge-meas} is
called the \emph{bridged model} associated to $P$.
\end{definition}

\begin{proof}[Proof of Theorem~\ref{thm:main}]
It follows from Theorem~\ref{thm:bridge-finite} and
Theorem~\ref{thm:bridge-meas}.
\end{proof}

\begin{remark}\label{rem:meas-hyp}
For a concrete model the hypothesis says that
$\theta \mapsto P_{v,d}(A \mid \theta)$ is a measurable function for every event
$A$. This is what is needed in order to integrate the model against a prior,
and it holds when the model has a density which is jointly measurable in the
observation and in the parameter. It is a restriction with respect to the
formulation of McCullagh, which is $\Set$-valued and imposes no measurability
in $\theta$.
Hence Theorem~\ref{thm:bridge-meas} does not say that every statistical model
in the sense of Definition~\ref{def:mcbmodel} gives a natural transformation
in $\Stoch$. The hypothesis holds, with the obvious Borel structures, for
every exponential family in its natural parametrisation and for all the
examples of~\cite{mccullagh_what_2002} with finite dimensional parameter
spaces. The measure-valued and function-valued parameter spaces
of~\cite[\S8]{mccullagh_what_2002} require the choice of a $\sigma$-algebra,
and we do not consider them.
\end{remark}

\begin{example}\label{ex:oneway-cont}
In Example~\ref{ex:oneway-setup} we have $\Theta(\omega)=\R^\omega\times\R_{>0}$
with its Borel structure, and the measurability of
$(\mu,\sigma^2) \mapsto \bigotimes_i \Norm(\mu_{x(i)},\sigma^2)$ is classical.
Hence Theorem~\ref{thm:bridge-meas} applies, and the conditions of
Remark~\ref{rem:oneway-slots} become commuting squares of kernels.
\end{example}

\begin{remark}[Reduction to the finite case]\label{rem:reduction}
The data of an experiment are finite, since responses and covariates are
bounded by the range of the instrument and are recorded with finite precision.
This does not reduce Theorem~\ref{thm:bridge-meas} to
Theorem~\ref{thm:bridge-finite}, but the measurable case can be approximated by
finite ones in the following sense. Let $F \subset \R$ be a finite set, for
example a set of floating-point numbers, and let $\rho \colon \R \to F$ be the
rounding map, which is measurable and surjective. We add the object $F$ and the
morphism $\rho$ to $\catV$ and we set
$P_{F,d}(\,\cdot \mid \theta) \coloneqq (\rho^{u})_{*}\,P_{\R,d}(\,\cdot \mid
\theta)$, so that~\eqref{eq:NV} holds by construction. Hence the quantised
model is the image of the model under a morphism of response scales, with no
hypothesis and no error term. It is not an instance of
Theorem~\ref{thm:bridge-finite}. Indeed $\rho$ does not change $\Theta$, so
that the parameter objects remain continua. A finite grid of parameter values
is preserved by the maps $\Theta(\psi)$ when these are reindexings, as in the
one-way layout. This is the setting of Section~\ref{sec:hochschild}, and
Proposition~\ref{prop:oneway-cont-recalib} gives the statement which the grids
approximate. A grid is not preserved when $\Theta(\psi) = \psi^{*}$ uses the
linear structure, as in Examples~\ref{ex:linreg} and~\ref{ex:logreg}, because a
floating-point grid is not closed under addition. Naturality is an equational
condition, and we do not develop a notion of approximate naturality here. For
this reason the computations of Section~\ref{sec:hochschild} are carried out in
exact arithmetic. Note also that the pushforward of a Gaussian family along
$\rho$ is not an exponential family, and that the group means and the
within-group sum of squares are not sufficient for it. Finally, let
$(\rho_n \colon S \to F_n)_{n}$ be a refining sequence of finite quantisations
whose cells generate the $\sigma$-algebra of $S$. The canonical map
$\Prb(S) \to \varprojlim_n \Prb(F_n)$ is injective, since the cells form a
$\pi$-system, and it is not surjective in general. For example, for the dyadic
quantisation of $[0,1)$, the consistent family concentrated on the cells
$[\tfrac12 - 2^{-n}, \tfrac12)$ has no preimage. Thus the measurable case is a
projective limit of finite cases and not a special case of them.
\end{remark}

\subsection{The arrow category}

In Appendix~\ref{sec:Bref} we compare the category $\StatMod$ of Br\o ns with
the category whose objects are the morphisms of $\Stoch$ and whose morphisms
are the commuting squares with deterministic sides. The bridge is a computation
with a representable functor and a copower; see~\eqref{eq:copower}. On the
objects whose parameter set and sample space satisfy two regularity
hypotheses, which always hold in the finite case, $\StatMod$ is equivalent to
the category of the arrows of $\Stoch$ whose domain is a copower of the
monoidal unit (Proposition~\ref{wprop:copower}). Furthermore, $\StatMod$ is a
coreflective subcategory of its measurable analogue
(Proposition~\ref{prop:coreflection}). The sections that follow do not depend
on the appendix.
\section{Meaningfulness as naturality}\label{sec:meaningful}

In this section we prove Theorem~\ref{thm:main-tjur}. Recall that
Tjur~\cite{tjur_discussion_2002} considers a parameter function, i.e.\ a rule
which assigns to each finite sample of covariate values a parameter of the
corresponding model, in his words, a ``function of the unknown
distribution''~\cite[p.~1299]{tjur_discussion_2002}. He calls it
\emph{meaningful} when it is invariant under the formation of marginal models,
i.e.\ when reducing the design by removing some of the sampled $x$'s leaves
the associated parameter unchanged. The criterion of
McCullagh~\cite[\S4.5]{mccullagh_what_2002} is that a subparameter be a natural
transformation of functors on the design category. Thus Tjur quantifies over
the removals of sample points and McCullagh over all the design morphisms.

\subsection{Coherence}

Let the design data be as in Definition~\ref{def:designdata}. Recall that an
\emph{insertion} is a proper inclusion $\omega' \subsetneq \omega$ between
objects of $\catO$ (see Example~\ref{ex:oneway-setup}). We denote by
$\catO_{\mathrm{red}} \subseteq \catO$ the subcategory generated by the
insertions. Its morphisms are the inclusions, proper or not. We denote by
$\catO_{\mathrm{inj}} \subseteq \catO$ the subcategory generated by
$\catO_{\mathrm{red}}$ and by the isomorphisms of $\catO$. In
Example~\ref{ex:oneway-setup} its morphisms are the injections. An insertion
leaves the surviving labels unchanged, while an injection which is not an
inclusion is a relabelling followed by an insertion.

\begin{definition}\label{def:coherence}
Let $F, G \colon \catO^{\op} \to \Stoch$ be functors and let
$\cat{Q} \subseteq \catO$ be a subcategory which contains all the objects,
called the \emph{quantifier class}. A family
$t = (t_{\omega} \colon F(\omega) \rightsquigarrow G(\omega))_{\omega \in \ob\catO}$
of morphisms of $\Stoch$ is called \emph{$\cat{Q}$-coherent} if
\begin{equation}\label{eq:coherence}
t_{\omega} \circ F(\psi) \;=\; G(\psi) \circ t_{\omega'}
\qquad\text{for every } \psi \colon \omega \to \omega' \text{ in } \cat{Q},
\end{equation}
i.e.\ if $t$ is a natural transformation
$F|_{\cat{Q}^{\op}} \Rightarrow G|_{\cat{Q}^{\op}}$.
\end{definition}

Coherence over a class implies coherence over every subclass, and
$\catO$-coherence is naturality. In words, a family is coherent over $\cat{Q}$
if every change of covariate space which belongs to $\cat{Q}$ carries the
family to itself.

McCullagh takes the morphisms of $\catO$ to be
injective~\cite[\S4.1]{mccullagh_what_2002}. Hence under his convention a
merging is not a morphism, and the difference between his criterion and the
one of Tjur is given by the bijections, i.e.\ by the relabellings. In
Definition~\ref{def:designdata} we follow Br\o ns and we impose no such
restriction. Since every map of covariate spaces is a surjection followed by
an injection, in our setting there are three quantifier classes
\[
\underbrace{\catO_{\mathrm{red}}}_{\text{Tjur}}
\;\subseteq\;
\underbrace{\catO_{\mathrm{inj}}}_{\text{McCullagh as stated}}
\;\subseteq\;
\underbrace{\catO}_{\text{naturality here}},
\]
which are strictly nested in Example~\ref{ex:oneway-setup}. The first
inclusion adds the relabellings and the second one adds the mergings. The
corresponding conditions are ordered in the opposite way, i.e.\ coherence over
$\catO$ is the strongest one. Neither source asserts that the criterion of
Tjur and the one of McCullagh coincide, and in general they do not. By
Theorem~\ref{thm:main} the three conditions are equalities between morphisms
of the same category, so that they can be compared. In the one-way layout, the
failure of the marginal dispersion is detected by the insertions, i.e.\ by the
weaker criterion of Tjur (see Example~\ref{ex:oneway-findings}).

\subsection{Design-indexed quantities}

\begin{definition}\label{def:meaningful}
Let $\mathbf{P} \colon \mathbf{L} \Rightarrow \mathbf{R}$ be a bridged model
(Definition~\ref{def:bridged}) and let
$\Lambda \colon \catO^{\op} \to \Meas$ be a functor, called the \emph{target}.
The measurable space $\Lambda(\omega)$ is the space of values at $\omega$.
A \emph{design-indexed quantity} for $\mathbf P$ with target $\Lambda$ is a
family of morphisms of $\Stoch$
\[
g_{\omega} \colon \Theta_\Meas(\omega) \rightsquigarrow \Lambda(\omega),
\qquad \omega \in \ob\catO .
\]
It is called \emph{deterministic} if $g_{\omega} = \del f_{\omega}$ for
measurable maps
$f_{\omega} \colon \Theta_\Meas(\omega) \to \Lambda(\omega)$, i.e.\ if it is an
ordinary functional of the parameter. The quantity $g$ is called
\emph{meaningful} if it is $\catO$-coherent in the sense of
Definition~\ref{def:coherence} with $F = \del\,\Theta_\Meas$ and
$G = \del\,\Lambda$, i.e.\ if
\begin{equation}\label{eq:tjursquare}
g_{\omega} \circ \del\Theta_\Meas(\psi) \;=\; \del\Lambda(\psi) \circ
g_{\omega'}
\quad\text{in } \Stoch
\end{equation}
for every $\psi \colon \omega \to \omega'$ in $\catO$. It is called
\emph{Tjur-natural} if it is $\catO_{\mathrm{red}}$-coherent, i.e.\
if~\eqref{eq:tjursquare} holds for every $\psi$ in $\catO_{\mathrm{red}}$.
\end{definition}

Thus a meaningful quantity is a natural transformation
$\del\,\Theta_\Meas \Rightarrow \del\,\Lambda$ of functors
$\catO^{\op} \to \Stoch$. The functors are the same for the deterministic
quantities and for the general ones, and only the components differ.

\begin{example}\label{ex:targets}
In the one-way layout (Example~\ref{ex:oneway-setup}) we have
$\Theta(\omega) = \R^{\omega} \times \R_{>0}$ and
$\Theta(\psi)(\mu',\sigma^{2}) = (\mu' \circ \psi, \sigma^{2})$. The vector of
the group means $g_{\omega}(\mu,\sigma^{2}) = \mu$ has target
$\Lambda(\omega) = \R^{\omega}$ with $\Lambda(\psi) = \psi^{*}$, i.e.\ the
restriction $\mu' \mapsto \mu' \circ \psi$. The variance
$g_{\omega}(\mu,\sigma^{2}) = \sigma^{2}$ has the constant target
$\Lambda(\omega) = \R_{>0}$ with $\Lambda(\psi) = \id$. Both are meaningful.
Indeed~\eqref{eq:tjursquare} reads $\mu' \circ \psi = \psi^{*}\mu'$ in the
first case and $\sigma^{2} = \sigma^{2}$ in the second one.
\end{example}

\begin{remark}\label{rem:indexings}
The components of McCullagh in~\cite[\S4.5]{mccullagh_what_2002} are indexed
by the designs and are natural over all the design morphisms
$(\psi_{d},\psi_{c})$, while in Definition~\ref{def:meaningful} the quantities
are indexed by the objects of $\catO$. The two indexings agree on the families
which factor through $\pr_{\catO}$. Indeed, for these families the squares with
identity covariate leg are trivial. Furthermore, for every
$\psi \colon \omega \to \omega'$ and every design $(u,\omega,x)$ the pair
$(\id_{u},\psi) \colon (u,\omega,x) \to (u,\omega',\psi\circ x)$ is a morphism
of $\catD$, so that $\pr_{\catO}$ is surjective on the morphisms.
\end{remark}

\subsection{Tjur's criterion}\label{sec:tjur}

In this subsection $\cat{C} = \Set$, the functors $J_{\catU}$ and $J_{\catO}$
are the underlying-set functors; $\catO$ is a category of finite sets such as
$\catO_{\mathrm{lab}}$, and $\catU$ is a full subcategory of
$\FinSet_{\mathrm{inj}}$ which contains the underlying sets of the objects of
$\catO$, so that every inclusion of a subsample between such sets is a
morphism of $\catU$. The regression examples of Section~\ref{sec:worked},
whose covariate spaces are vector spaces, do not satisfy these hypotheses.

\begin{definition}\label{def:tjurreduction}
Let $(u,\omega,x)$ be a design with $x$ surjective. A \emph{Tjur reduction} of
$(u,\omega,x)$ is a morphism of $\catD$
\[
(\varphi,\psi) \colon (u',\omega',x') \longrightarrow (u,\omega,x)
\]
where $\varphi \colon u' \hookrightarrow u$ is the inclusion of a proper
subsample, $\omega' = x(\varphi(u'))$ is the surviving support, $x'$ is the
corestriction of $x \circ \varphi$, and
$\psi \colon \omega' \hookrightarrow \omega$ is the inclusion.
\end{definition}

A Tjur reduction is the removal of the sample points
$u \setminus \varphi(u')$ together with the covariate values which are no
longer observed, i.e.\ Tjur's ``removal of some of the
$x$'s''~\cite[p.~1299]{tjur_discussion_2002}. His condition at such a
reduction compares the parameter assigned to the marginal model with the
parameter of the bigger sample, the former being for him a function of the
marginal distribution. Since a bridged model is natural at $(\varphi,\psi)$,
see~\eqref{eq:ND}, the marginal law at the reduced design depends on $\theta$
only through $\Theta_\Meas(\psi)\theta$. The values at $\omega$ are carried to
$\omega'$ by $\Lambda(\psi)$, which is the identity in Tjur's setting, where
the scale is fixed. Hence the comparison is the square~\eqref{eq:tjursquare}
at $\psi$, and we say that $g$ \emph{satisfies Tjur's condition} at the
reduction if that square commutes.

\begin{lemma}\label{lem:tjurshadow}
A design-indexed quantity $g$ satisfies Tjur's condition at every Tjur
reduction of every design with surjective design map if and only if $g$ is
Tjur-natural.
\end{lemma}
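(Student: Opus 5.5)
The plan is to show that the set of inclusions $\psi$ arising as the covariate legs of Tjur reductions is exactly the set of insertions of $\catO$. Since Tjur's condition at a reduction $(\varphi,\psi)$ is by definition the square~\eqref{eq:tjursquare} at $\psi$, this is enough. Tjur-naturality is coherence over $\catO_{\mathrm{red}}$, the subcategory generated by the insertions. Coherence squares compose: if~\eqref{eq:tjursquare} holds at $\psi_1$ and $\psi_2$, then it holds at $\psi_2\circ\psi_1$ by functoriality of $\del\Theta_\Meas$ and $\del\Lambda$. The square holds trivially at identities. Hence $g$ is Tjur-natural if and only if~\eqref{eq:tjursquare} holds at every insertion.

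For the forward direction, assume that $g$ satisfies Tjur's condition at every Tjur reduction. Let $\psi\colon\omega'\subsetneq\omega$ be an insertion. We construct a Tjur reduction whose covariate leg is $\psi$. Take $u=\omega$, regarded as an object of $\catU$, which is allowed because $\catU$ contains the underlying sets of the objects of $\catO$. Take $x=\id_\omega$, which is surjective. Let $u'=\omega'$ and let $\varphi\colon\omega'\hookrightarrow\omega$ be the inclusion, which is a morphism of $\catU$ since $\catU$ is full in $\FinSet_{\mathrm{inj}}$. Then $\varphi$ is the inclusion of a proper subsample, and the surviving support is $x(\varphi(u'))=\omega'$. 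The corestriction $x'$ is $\id_{\omega'}$, and condition~\eqref{eq:commacond} holds. So $(\varphi,\psi)$ is a Tjur reduction, and the hypothesis gives~\eqref{eq:tjursquare} at $\psi$.

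For the converse, the covariate leg $\psi\colon\omega'\hookrightarrow\omega$ of any Tjur reduction is an inclusion between objects of $\catO$, so it is a morphism of $\catO_{\mathrm{red}}$, whether proper or not. Tjur-naturality therefore gives the square at $\psi$ directly.

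The one point to check, and the expected obstacle, is that $\omega'=x(\varphi(u'))$ is an object of $\catO$. It must be non-empty in $\catO_{\mathrm{lab}}$, which holds if the proper subsample $u'$ is non-empty. If the definitions allow $u'=\emptyset$, the empty support is simply not a design of this form, and such reductions are excluded. A reduction whose support is not proper, i.e.\ $\omega'=\omega$, has $\psi=\id$, and its square is trivial, so it causes no difficulty. I would also note that in the forward direction the construction uses only units equal to labels, which is exactly where the hypothesis on $\catU$ enters.
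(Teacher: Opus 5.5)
Your proof is correct and follows the paper's argument. You reduce Tjur-naturality to the squares at insertions by pasting, realise each insertion $\psi$ as the covariate leg of the tautological Tjur reduction $(\psi,\psi)\colon(\omega',\omega',\id)\to(\omega,\omega,\id_\omega)$, and observe conversely that every covariate leg of a Tjur reduction is an inclusion, hence a morphism of $\catO_{\mathrm{red}}$; your remark on the non-emptiness of the surviving support is a harmless extra check that the paper leaves implicit.
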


\begin{proof}
Suppose that $g$ is Tjur-natural. The covariate leg $\psi$ of a Tjur reduction
is the inclusion of the surviving support. It is an insertion when the removal
empties some fibre of $x$, and it is the identity otherwise. The
square~\eqref{eq:tjursquare} holds in the first case by hypothesis and in the
second one trivially.

Conversely, it is enough to check~\eqref{eq:tjursquare} at each insertion
$\psi \colon \omega' \subsetneq \omega$, since such squares can be pasted along
the compositions and hold trivially at the identities. Consider the
tautological design $(\omega,\omega,\id_\omega)$, which has one unit for each
label and surjective design map, and the subsample
$\varphi = \psi \colon \omega' \hookrightarrow \omega$. It is a morphism of
$\catU$ because $\omega'$ and $\omega$ are objects of $\catO$, hence of
$\catU$, and $\catU$ is full in $\FinSet_{\mathrm{inj}}$. The surviving
support is $x(\varphi(\omega')) = \omega'$, so that
$(\psi,\psi) \colon (\omega',\omega',\id) \to (\omega,\omega,\id)$ is a Tjur
reduction with covariate leg $\psi$, and Tjur's condition at it is the
square~\eqref{eq:tjursquare} at $\psi$.
\end{proof}

The parameter functions of Tjur are indexed by the finite samples of covariate
values, with multiplicities, and not by the covariate spaces, while
Lemma~\ref{lem:tjurshadow} concerns quantities which are already indexed by the
support. We now remove this restriction.

\begin{definition}\label{def:sampleindexed}
A \emph{sample-indexed quantity} for $\mathbf P$ with target
$\Lambda \colon \catO^{\op} \to \Meas$ is a family of morphisms of $\Stoch$
\[
G_{d} \colon \Theta_\Meas(\omega) \rightsquigarrow \Lambda(\omega),
\qquad d = (u,\omega,x) \text{ a design with } x \text{ surjective},
\]
i.e.\ one morphism for each finite sample of covariate values, where the
sample is the pair $(u,x)$ and $\omega$ is its support. It is called
\emph{Tjur-meaningful} if
\begin{equation}\label{eq:sampletjur}
G_{d'} \circ \del\Theta_\Meas(\psi) \;=\; \del\Lambda(\psi) \circ G_{d}
\quad\text{in } \Stoch
\end{equation}
for every Tjur reduction $(\varphi,\psi) \colon d' \to d$. It is called
\emph{support-indexed} if $G_{(u,\omega,x)} = g_{\omega}$ for a design-indexed
quantity $g$, i.e.\ if it depends on the sample only through its support.
\end{definition}

\begin{example}\label{ex:sampleindexed}
Let $\Lambda$ be constant at $\R$. The sample size $G_{(u,\omega,x)} = |u|$ is
a sample-indexed quantity. In the simple regression model of
Example~\ref{ex:alphabetaxbar}, the covariate average weighted by the
multiplicities gives the sample-indexed quantity
\[
G_{(u,\omega,x)}(\alpha,\beta,\sigma^{2}) = \alpha + \beta\,|u|^{-1}
\textstyle\sum_{i \in u} x(i).
\]
These are the examples of Tjur~\cite[pp.~1299--1300]{tjur_discussion_2002},
and neither of them is support-indexed.
\end{example}

\begin{proposition}\label{prop:descent}
Suppose that $\catU$ contains a disjoint union $u \sqcup u'$ of any two of its
objects, as $\FinSet_{\mathrm{inj}}$ does. For a sample-indexed quantity $G$
with target $\Lambda$, the following are equivalent:
\begin{enumerate}
\item $G$ is Tjur-meaningful;
\item $G$ is support-indexed, i.e.\ $G_{(u,\omega,x)} = g_{\omega}$, and the
design-indexed quantity $g$ is Tjur-natural.
\end{enumerate}
The quantity $g$ of part~(2) is unique, namely
$g_{\omega} = G_{(\omega,\omega,\id_{\omega})}$. In particular a
Tjur-meaningful sample-indexed quantity depends neither on the sample size nor
on the multiplicities.
\end{proposition}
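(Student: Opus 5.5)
The plan is to show that the Tjur reductions whose covariate leg is the identity already force $G$ to depend only on the support. After that, the remaining reductions are exactly the squares of Lemma~\ref{lem:tjurshadow}. The observation that makes everything work is this: in \eqref{eq:sampletjur}, when $\psi = \id_\omega$ both $\del\Theta_\Meas(\psi)$ and $\del\Lambda(\psi)$ are identities, so the condition reads $G_{d'} = G_{d}$.

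For (1)$\Rightarrow$(2), let $d_{1} = (u_{1},\omega,x_{1})$ and $d_{2} = (u_{2},\omega,x_{2})$ be two designs with surjective design maps and the same support $\omega$. Since $\omega$ is non-empty and $x_{1}, x_{2}$ are surjective, both $u_{1}$ and $u_{2}$ are non-empty. By hypothesis $u = u_{1} \sqcup u_{2}$ is an object of $\catU$, and I take $d = (u,\omega,x)$ with $x = x_{1} \sqcup x_{2}$. This design map is surjective, and in fact $x$ restricted to either summand is already surjective.

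The coproduct inclusion $\iota_{1} \colon u_{1} \hookrightarrow u$ is an injection, hence a morphism of $\catU$, and it is proper because $u_{2} \neq \emptyset$. Its surviving support $x(\iota_{1}(u_{1})) = x_{1}(u_{1})$ equals $\omega$. The corestriction of $x \circ \iota_{1}$ is $x_{1}$, and \eqref{eq:commacond} holds with $\psi = \id_{\omega}$. So $(\iota_{1},\id_{\omega}) \colon d_{1} \to d$ is a Tjur reduction in the sense of Definition~\ref{def:tjurreduction}, and Tjur-meaningfulness gives $G_{d_{1}} = G_{d}$. The same argument with $\iota_{2}$ gives $G_{d_{2}} = G_{d}$, hence $G_{d_{1}} = G_{d_{2}}$.

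Next I take $d_{2}$ to be the tautological design $(\omega,\omega,\id_{\omega})$. This is allowed because $\omega$ is an object of $\catU$ by the standing hypothesis of the subsection. It follows that $G_{(u,\omega,x)} = G_{(\omega,\omega,\id_\omega)} \eqqcolon g_{\omega}$ for every sample with support $\omega$, so $G$ is support-indexed with $g$ uniquely determined. Once $G$ is support-indexed, \eqref{eq:sampletjur} at a Tjur reduction is exactly Tjur's condition for $g$ at that reduction, namely the square~\eqref{eq:tjursquare} at its covariate leg $\psi$. Lemma~\ref{lem:tjurshadow} then says that $g$ is Tjur-natural. Concretely, its converse direction realises each insertion $\psi$ as the covariate leg of the reduction $(\psi,\psi)$ of the tautological design.

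For (2)$\Rightarrow$(1), the covariate leg of any Tjur reduction is either an insertion or the identity, as in the first half of the proof of Lemma~\ref{lem:tjurshadow}. With $G_{d} = g_{\omega}$, condition \eqref{eq:sampletjur} becomes \eqref{eq:tjursquare} at that leg. It holds by Tjur-naturality in the first case and trivially in the second. The final assertion, that $G$ depends neither on the sample size nor on the multiplicities, is a restatement of support-indexedness.

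I expect the main point needing care to be the check that $(\iota_{1},\id_\omega)$ really is a Tjur reduction. This requires three things: the subsample is proper, the surviving support is all of $\omega$ rather than a smaller set, and the disjoint union lies in $\catU$. These are exactly where non-emptiness of $\omega$, surjectivity of the design maps and the disjoint-union hypothesis are used. Everything else is bookkeeping with identity legs.
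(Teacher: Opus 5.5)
Your proposal is correct and follows the paper's proof essentially step by step. The design $(u_1\sqcup u_2,\omega,[x_1,x_2])$, with its two coproduct injections as Tjur reductions with identity covariate leg, forces support-indexing with $g_\omega = G_{(\omega,\omega,\id_\omega)}$; the tautological reductions $(\psi,\psi)$ then give Tjur-naturality, and (2)$\Rightarrow$(1) uses the same case split on the covariate leg. The only thing the paper adds is the trivial case of an empty support (if $\emptyset$ is an object of $\catO$), where $u=\emptyset$ and there is a single design; your phrase ``since $\omega$ is non-empty'' passes over this case.
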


\begin{proof}
(2)~$\Rightarrow$~(1). Let $(\varphi,\psi) \colon d' \to d$ be a Tjur
reduction. The covariate leg $\psi$ is the inclusion of the surviving support,
i.e.\ an insertion or an identity, and~\eqref{eq:sampletjur} is the
square~\eqref{eq:tjursquare} at $\psi$. It holds in the first case because $g$
is Tjur-natural and in the second one trivially.

(1)~$\Rightarrow$~(2). Let $d = (u,\omega,x)$ and $d' = (u',\omega,x')$ be two
designs with the same support $\omega \neq \emptyset$, so that $u$ and $u'$
are non-empty. We set $d'' = (u \sqcup u',\omega,[x,x'])$, whose design map is
surjective because $x$ is. The two injections of the coproduct are inclusions
of proper subsamples into $u \sqcup u'$, the surviving support of each of them
is $\omega$, and the corestrictions are $x$ and $x'$. Hence
$(\iota_{u},\id_{\omega}) \colon d \to d''$ and
$(\iota_{u'},\id_{\omega}) \colon d' \to d''$ are Tjur reductions with
identity covariate leg. At such a reduction~\eqref{eq:sampletjur} reads
$G_{d} = G_{d''}$, since $\Theta_\Meas$ and $\Lambda$ map identities to
identities. Therefore $G_{d} = G_{d''} = G_{d'}$, i.e.\ $G$ depends on the
sample only through its support. For $\omega = \emptyset$ there is nothing to
prove. We set $g_{\omega} = G_{(\omega,\omega,\id_{\omega})}$, where the
tautological design exists because $\omega$ is an object of $\catU$. Let
$\psi \colon \omega' \subsetneq \omega$ be an insertion. The tautological
reduction $(\psi,\psi) \colon (\omega',\omega',\id) \to (\omega,\omega,\id)$
of the proof of Lemma~\ref{lem:tjurshadow} is a Tjur reduction with covariate
leg $\psi$, and~\eqref{eq:sampletjur} at it is the
square~\eqref{eq:tjursquare} for $g$ at $\psi$. Hence $g$ is Tjur-natural. The
uniqueness follows from the formula for $g_{\omega}$, and the last statement
follows from part~(2).
\end{proof}

Thus Tjur's criterion, imposed on the quantities that he considers, first
forces the indexing by the support and then reduces to Tjur-naturality in the
sense of Definition~\ref{def:meaningful}.

\begin{remark}\label{rem:distribution}
The parameter functions of Tjur are functions of the unknown
distribution~\cite[p.~1299]{tjur_discussion_2002}, while our quantities are
functions of the parameter. Every function of the distribution is a function
of the parameter, and the converse holds when
$\theta \mapsto \mathbf P_{v,d}(\,\cdot \mid \theta)$ is injective. We do not
pursue this distinction.
\end{remark}

\subsection{Meaningful and Tjur-natural quantities}

\begin{proposition}\label{prop:tjur}
Let $g$ be a design-indexed quantity.
\begin{enumerate}
\item Suppose that $g$ is deterministic, with $g_\omega = \del f_\omega$, and
that the $\sigma$-algebras of the spaces $\Lambda(\omega)$ separate points.
Then $g$ is meaningful if and only if the maps $f_\omega$ form a natural
transformation $\Theta_\Meas \Rightarrow \Lambda$ in $\Meas$.
\item If $g$ is meaningful then it is Tjur-natural. The converse holds when
$\catO_{\mathrm{red}} = \catO$ and it does not hold in general (see
Example~\ref{ex:tjurgap}).
\item Meaningfulness is preserved by the composition with the natural
transformations of targets $\Lambda \Rightarrow \Lambda'$ (in $\Meas$, or in
$\Stoch$ for the general quantities). It is also preserved by the pullback
along the functors $F \colon \catO' \to \catO$ such that
$J_{\catO'} = J_{\catO} \circ F$, i.e.\ by a change of the covariate scheme. In
this case $(u,\omega',x) \mapsto (u,F\omega',x)$ is a functor
$\catD' \to \catD$, along which the bridged model $\mathbf{P}$ is restricted.
\end{enumerate}
\end{proposition}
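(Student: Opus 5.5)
The plan is to prove the three parts separately, each by reduction to results already established.

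For part~(1), I would rewrite both sides of~\eqref{eq:tjursquare} as Dirac kernels. By functoriality of $\del$, the left side is $\del f_\omega \circ \del\Theta_\Meas(\psi) = \del(f_\omega \circ \Theta_\Meas(\psi))$. The right side is $\del(\Lambda(\psi)\circ f_{\omega'})$. Since the $\sigma$-algebra of $\Lambda(\omega)$ separates points, Lemma~\ref{lem:dirac}(1) says that $\del$ is injective on $\Meas(\Theta_\Meas(\omega'),\Lambda(\omega))$, which is the content of Lemma~\ref{lem:dirac}(3). Hence the square commutes in $\Stoch$ if and only if $f_\omega\circ\Theta_\Meas(\psi) = \Lambda(\psi)\circ f_{\omega'}$ holds in $\Meas$. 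Quantifying over all $\psi$ in $\catO$ then gives naturality $\Theta_\Meas\Rightarrow\Lambda$.

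For part~(2), the forward implication is immediate from Definition~\ref{def:coherence}, since coherence over $\catO$ implies coherence over the subclass $\catO_{\mathrm{red}}$. If $\catO_{\mathrm{red}}=\catO$, the two conditions quantify over the same class and therefore coincide. To show the converse fails in general, I would defer to Example~\ref{ex:tjurgap}, which comes later in the paper. If I had to supply a counterexample myself, I would use the one-way layout with target $\Lambda$ constant at $\R$ and the quantity $g_\omega(\mu,\sigma^2)=\sum_{a\in\omega}\mu_a$. Along an inclusion $\omega'\subseteq\omega$, the left side of~\eqref{eq:tjursquare} is $g_\omega(\mu'\circ\psi)$, so the square asks whether summing over $\omega$ the pulled-back vector equals $g_{\omega'}(\mu')$. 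This does not literally reduce to an identity, so the example needs care. I expect choosing a genuine gap example to be the main obstacle, and it requires a quantity that is sensitive to non-injective $\psi$ but not to inclusions. The paper asserts that Example~\ref{ex:tjurgap} is natural at $\psi$ exactly when $\psi$ is injective, so that example is the witness. A concrete choice I would verify is $g_\omega=$ the vector of distinct values of $\mu$, counted with a target built from finite subsets, $\Lambda(\omega)=$ finite subsets of $\R$ with $\Lambda(\psi)=\id$. This is invariant under inclusions only if restriction preserves the value set, which it does not. I would therefore rely on the paper's example rather than construct one.

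For part~(3), I would first compose. If $\tau\colon\Lambda\Rightarrow\Lambda'$ is natural, then $\del\tau$ is natural in $\Stoch$ because $\del$ is a functor. Pasting the naturality square of $\del\tau$ to~\eqref{eq:tjursquare} shows that $\del\tau_\omega\circ g_\omega$ is natural, and the same argument works verbatim for a Kleisli-natural $\tau$. Next I would pull back. Given $F\colon\catO'\to\catO$, the family $(g_{F\omega'})$ is natural from $\del\Theta_\Meas F^{\op}$ to $\del\Lambda F^{\op}$, since the squares at $\psi'$ are the squares of $g$ at $F\psi'$. Finally, the assignment $(u,\omega',x)\mapsto(u,F\omega',x)$ sends objects to designs because $J_{\catO'}\omega'=J_\catO F\omega'$, and it sends morphisms $(\varphi,\psi')$ to $(\varphi,F\psi')$, for which the comma condition is unchanged. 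Functoriality of this assignment follows from that of $F$, and precomposing $\mathbf P$ with it gives the restricted bridged model.
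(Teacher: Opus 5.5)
Your proposal is correct and follows the paper's proof essentially verbatim. Part~(1) uses the injectivity of $\del$ on maps into $\Lambda(\omega)$ (Lemma~\ref{lem:dirac}(1)), part~(2) restricts the quantifier class and cites Example~\ref{ex:tjurgap}, and part~(3) pastes squares and uses $J_{\catO'}=J_{\catO}F$, where you spell out the functor $\catD'\to\catD$ on morphisms slightly more explicitly than the paper. Delete the exploratory counterexamples in part~(2): the sum of the group means and the set of distinct values both already fail at proper inclusions, so neither is Tjur-natural, whereas Example~\ref{ex:tjurgap} works precisely because $[a\neq b]=[\psi a\neq\psi b]$ holds exactly for injective $\psi$.
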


\begin{proof}
(1) Let $g = \del f$. Then~\eqref{eq:tjursquare} reads
$\del(f_\omega \circ \Theta_\Meas(\psi)) = \del(\Lambda(\psi) \circ
f_{\omega'})$. Since $\del$ is injective on the maps into $\Lambda(\omega)$ by
Lemma~\ref{lem:dirac}(1), this holds if and only if
$f_\omega \circ \Theta_\Meas(\psi) = \Lambda(\psi) \circ f_{\omega'}$.

(2) Tjur-naturality is~\eqref{eq:tjursquare} for the morphisms of
$\catO_{\mathrm{red}}$ only. This gives the first two statements. The quantity
of Example~\ref{ex:tjurgap} is Tjur-natural and not meaningful. Note that the
quantity of Example~\ref{ex:alphabetaxbar} fails already on
$\catO_{\mathrm{red}}$, so that it does not distinguish the two conditions.

(3) The first statement follows by pasting the squares. For the pullback,
$g_{F\omega'}$ is natural in $\omega'$ because $F$ maps the morphisms of
$\catO'$ to morphisms of $\catO$. The hypothesis $J_{\catO'} = J_{\catO}F$
gives that the restricted model is a bridged model over $\catO'$, since a
design $(u,\omega',x)$ for $\catO'$ has
$x \colon J_{\catU}u \to J_{\catO'}\omega' = J_{\catO}F\omega'$.
\end{proof}

\begin{example}\label{ex:alphabetaxbar}
We write the non-example of
Tjur~\cite[pp.~1299--1300]{tjur_discussion_2002} in our notation. Consider the
simple regression model with $\Theta(\omega) = \R^2 \times \R_{>0}$ and
coordinates $(\alpha,\beta,\sigma^2)$, let $\catO$ be the category of the
finite selections of covariate values, and let $\Lambda$ be constant at $\R$
with $\Lambda(\psi) = \id$. This is the unreplicated case, where the design
map is injective, so that sample points and covariate values coincide. The
general case, where $\bar x$ is weighted by the multiplicities, is the
sample-indexed quantity of Example~\ref{ex:sampleindexed}, which is excluded by
Proposition~\ref{prop:descent} before any comparison between support-indexed
quantities is made. We set
\[
g_{\omega}(\alpha,\beta,\sigma^2) \;=\; \alpha + \beta\,\bar x_{\omega},
\qquad
\bar x_{\omega} = |\omega|^{-1}\textstyle\sum_{x \in \omega} x .
\]
Each $g_\omega$ is a parameter of the model at $\omega$. Let
$\psi \colon \omega \subsetneq \omega'$ be an insertion. The parameter map
$\Theta(\psi)$ is the identity, while $\bar x_\omega \neq \bar x_{\omega'}$ in
general. Hence~\eqref{eq:tjursquare} reads
$\alpha + \beta\bar x_{\omega} = \alpha + \beta\bar x_{\omega'}$, and it fails
when $\beta \neq 0$. Since the failure occurs on $\catO_{\mathrm{red}}$, the
quantity is neither Tjur-natural nor meaningful. The same argument, where
$\Theta(\psi)$ is again the identity, applies to the correlation coefficient,
which depends on the design through the variance of the
covariate~\cite[\S6.2]{mccullagh_what_2002}. In this case it is the scale of
the design, and not its mean, which is not preserved. In the bridged setting,
both failures are equalities which do not hold.
\end{example}

\begin{example}\label{ex:tjurgap}
Let $\catO = \catO_{\mathrm{lab}}$ (Example~\ref{ex:oneway-setup}), let
$\Theta$ be any parameter functor, and let
$\Lambda(\omega) = \R^{\omega \times \omega}$ with
$\Lambda(\psi) = (\psi\times\psi)^{*}$, i.e.\
$\Lambda(\psi)(v)_{(a,b)} = v_{(\psi a,\,\psi b)}$. We set
$g_\omega(\theta)_{(a,b)} = [a \neq b]$, the indicator that two treatment
labels are distinct. The square~\eqref{eq:tjursquare} at
$\psi \colon \omega \to \omega'$ reads $[a \neq b] = [\psi a \neq \psi b]$ for
all $a,b \in \omega$, which holds if and only if $\psi$ is injective. Hence
$g$ is natural along every injection. In particular it is Tjur-natural, and it
satisfies the criterion of McCullagh as stated. On the other hand, $g$ is not
natural along any map which is not injective, for example along
$p \colon \{1,2\} \to \{*\}$. Thus the difference between Tjur-naturality and
meaningfulness occurs at the mergings.
\end{example}

\begin{proof}[Proof of Theorem~\ref{thm:main-tjur}]
It follows from Propositions~\ref{prop:descent} and~\ref{prop:tjur}(2) and
from Example~\ref{ex:tjurgap}.
\end{proof}

\begin{corollary}\label{cor:decidable}
Suppose that $\catO$ has finitely many morphisms, that
$\Theta_\Meas(\omega)$ and $\Lambda(\omega)$ are finite and discrete for every
$\omega$, and that the entries of the kernels $g_\omega$ belong to a subfield
$k \subseteq \R$ with decidable equality, for example $\mathbb{Q}$ or the
field of the real algebraic numbers. Then the meaningfulness of $g$ is
decidable. Indeed, it is equivalent to the finite conjunction, over the
morphisms $\psi \colon \omega \to \omega'$ of any generating set of $\catO$,
of the matrix equalities~\eqref{eq:tjursquare}, each of which is an equality
between two stochastic matrices of size
$|\Theta_\Meas(\omega')| \times |\Lambda(\omega)|$ with entries in $k$.
\end{corollary}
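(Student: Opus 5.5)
The plan is to reduce meaningfulness to finitely many checks at generators, and then to observe that each check is a finite computation in $k$.

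First, I restrict the quantifier in~\eqref{eq:tjursquare} to a generating set. Choose a finite set $E$ of morphisms of $\catO$ which generates $\catO$ under composition; the set of all morphisms will do, since it is finite by hypothesis. At an identity the square~\eqref{eq:tjursquare} holds trivially, because $\Theta_\Meas$ and $\Lambda$ are functors. Suppose the squares hold at $\psi\colon\omega\to\omega'$ and at $\chi\colon\omega'\to\omega''$. Then functoriality and associativity in $\Stoch$ paste them into the square at $\chi\circ\psi$:
\[
g_\omega\circ\del\Theta_\Meas(\chi\psi)
= g_\omega\circ\del\Theta_\Meas(\psi)\circ\del\Theta_\Meas(\chi)
= \del\Lambda(\psi)\circ g_{\omega'}\circ\del\Theta_\Meas(\chi)
= \del\Lambda(\psi)\circ\del\Lambda(\chi)\circ g_{\omega''},
\]
and the last term is $\del\Lambda(\chi\psi)\circ g_{\omega''}$. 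Hence $g$ is meaningful if and only if~\eqref{eq:tjursquare} holds for every $\psi\in E$. This is a finite conjunction, because $E$ is finite.

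Second, I translate each square into matrices. All the spaces involved are finite and discrete, so every morphism lies in $\FinStoch$. There each morphism is a stochastic matrix, and composition by~\eqref{eq:CK} is the matrix product.
\begin{itemize}
\item The matrices $\del\Theta_\Meas(\psi)$ and $\del\Lambda(\psi)$ are Dirac kernels, so they are $0/1$ matrices with a single $1$ in each row.
\item The entries of $g_\omega$ and $g_{\omega'}$ lie in $k$ by hypothesis.
\item Both sides of~\eqref{eq:tjursquare} are therefore $|\Theta_\Meas(\omega')|\times|\Lambda(\omega)|$ stochastic matrices whose entries are finite sums of products of elements of $k$ and of $0,1$. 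Each entry thus lies in $k$ and is computable with the field operations.
\end{itemize}
Equality of kernels between finite discrete spaces is equality of all the entries. Each square therefore amounts to finitely many equalities in $k$, and each of these is decidable by assumption.

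The only point that needs care is representing the input. The sets $\Theta_\Meas(\omega)$, $\Lambda(\omega)$ and the maps $\Theta_\Meas(\psi)$, $\Lambda(\psi)$ for $\psi\in E$ must be given as finite data, so that the Dirac matrices can be written down. I take this as part of the hypotheses, since all these objects are finite. With that granted, the argument is routine, and the decision procedure is to compute the matrix products and compare them entrywise.
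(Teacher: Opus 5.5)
Your proof is correct and follows the paper's argument: you paste the squares along composites, using the contravariant functoriality of $\del\Theta_{\Meas}$ and $\del\Lambda$, to reduce to a generating set, and then read each square as a finite entrywise equality in $k$. Your explicit pasting computation and your remark on how the finite data are presented spell out what the paper leaves implicit.
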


\begin{proof}
If~\eqref{eq:tjursquare} holds at $\psi$ and at $\psi'$ then it holds at
$\psi'\psi$, by pasting the two squares, since $\del\Theta_\Meas$ and
$\del\Lambda$ are functors. Hence it is enough to check a generating set,
which is finite, and each check is an equality between finitely many elements
of $k$, which is decidable by hypothesis.
\end{proof}
\section{A cohomological formulation of coherence for finite
designs}\label{sec:hochschild}

In this section we prove Theorem~\ref{thm:main-obstruction}. We suppose that
the design scheme is finite, and we show that the failure of naturality of a
quantity is a $1$-cochain of a complex of Baues and Wirsching. This links the
present framework to the diagrammatic Hochschild cohomology of Gerstenhaber
and Schack and to the cohomology of small categories of Baues and
Wirsching~\cite{gerstenhaber_schack_1983,baues_cohomology_1985}, as revisited
in~\cite[\S3]{caputi_diagrammatic_2025}. The cohomology controls the
obstructions to the $\catO$-coherence of the quantities
(Definition~\ref{def:coherence}).

\subsection{The diagram of algebras of a finite design scheme}

Let $\catO$ be a finite category of covariate spaces and suppose that the
parameter sets $\Theta(\omega)$ and the target sets $\Lambda(\omega)$ are
finite, as in Corollary~\ref{cor:decidable}. Let $k \supseteq \mathbb{Q}$ be a
field.

\begin{definition}\label{def:diagalg}
We denote by $\Alg_k$ the category of associative unital $k$-algebras and
unital homomorphisms. The \emph{parameter diagram of algebras} is the functor
\[
A \colon \catO \longrightarrow \Alg_k,
\qquad
A(\omega) \coloneqq k^{\,\Theta(\omega)},
\qquad
A(\psi) \coloneqq \Theta(\psi)^{*},
\]
where $k^{\,\Theta(\omega)}$ is the algebra of the functions on the parameter
set, with the pointwise operations, and $\Theta(\psi)^{*}$ is the
precomposition with $\Theta(\psi)$. It is covariant because $\Theta$ and
$k^{(-)}$ are both contravariant. In the same way a target $\Lambda$ gives the
functor $B(\omega) \coloneqq k^{\Lambda(\omega)}$.
\end{definition}

Let $g = (g_\omega \colon \Theta(\omega) \to \Lambda(\omega))$ be a
deterministic design-indexed quantity. It gives a family of homomorphisms of
algebras $g^{*} = (g_\omega^{*} \colon B(\omega) \to A(\omega))$, and $g$ is
meaningful (Definition~\ref{def:meaningful}) if and only if $g^{*}$ is a
natural transformation $B \Rightarrow A$. Indeed, all the spaces are finite and
discrete, so that meaningfulness is the naturality of the underlying maps by
Proposition~\ref{prop:tjur}(1). Furthermore $k^{(-)}$ is faithful, since a map
$f$ is recovered from $f^{*}$ through the indicator functions of its fibres.
Hence $g$ is natural if and only if $g^{*}$ is. For
$\psi \colon \omega \to \omega'$ we set
\begin{equation}\label{eq:defect}
c(\psi) \;\coloneqq\; A(\psi)\, g_{\omega}^{*} \;-\; g_{\omega'}^{*}\, B(\psi)
\;\in\; \Hom_k\bigl(B(\omega), A(\omega')\bigr),
\end{equation}
and we call $c = c_g$ the \emph{defect} of $g$. It measures the failure of
naturality of $g$ at $\psi$.

The defect is a $1$-cochain on $\catO$ in the sense of Baues and
Wirsching~\cite[(1.4)]{baues_cohomology_1985}, with coefficients in a natural
system $D \colon F\catO \to \mathrm{Ab}$ in their
sense~\cite[(1.2)]{baues_cohomology_1985}. Here $F\catO$ is the category of
factorisations of $\catO$, i.e.\ the twisted arrow category, denoted by
$\mathrm{Tw}\,\catO$ in~\cite{caputi_diagrammatic_2025}. Its objects are the
morphisms of $\catO$, and a morphism $\psi \to \psi'$ is a pair
$(\alpha,\beta)$ such that $\psi' = \alpha\psi\beta$. The natural system that
we use is obtained by pulling back the bimodule
$M(\omega,\omega') = \Hom_k(B(\omega),A(\omega'))$ along the forgetful functor
$F\catO \to \catO^{\op} \times \catO$~\cite[(1.16)--(1.18)]{baues_cohomology_1985}.
Thus $D(\psi) = \Hom_k(B(\omega),A(\omega'))$ for
$\psi \colon \omega \to \omega'$, and
$(\alpha,\beta) \colon \psi \to \alpha\psi\beta$ acts by
$f \mapsto A(\alpha)\,f\,B(\beta)$. With bimodule coefficients, this is the
Hochschild--Mitchell cohomology~\cite[(8.5)]{baues_cohomology_1985}. We write
\[
C^{0} = \prod\nolimits_\omega \Hom_k(B(\omega),A(\omega)),
\qquad
(d^{0}h)(\psi) = A(\psi)h_\omega - h_{\omega'}B(\psi),
\]
for the group of the $0$-cochains and for the differential. The equations
$d^{0}h = 0$, one for each $\psi$, define the
end~\cite[IX.5]{maclane_categories_1998}
\begin{equation}\label{eq:H0}
\Nat(B,A) \;=\; \int_{\omega} \Hom_k\bigl(B(\omega),A(\omega)\bigr)
\;=\; \ker d^{0} \;=\; H^{0},
\end{equation}
and the defect~\eqref{eq:defect} of a quantity $g$ is the coboundary
$c_g = d^{0}g^{*}$. By~\cite[(5.3)--(5.6)]{baues_cohomology_1985} the group
$H^{1}$ is the group of the derivations of $\catO$ into $D$, i.e.\ of the
$1$-cochains $c$ such that
$c(\psi'\psi) = A(\psi')\,c(\psi) + c(\psi')\,B(\psi)$, modulo the inner
derivations $d^{0}h$. We write $Z^{1}$ and $B^{1}$ for the groups of the
$1$-cocycles and of the $1$-coboundaries.

\begin{remark}\label{rem:GS}
When $B = A$, the complex above is the row of Hochschild degree $1$ of the
Gerstenhaber--Schack bicomplex of the diagram
$A$~\cite[Prop.~3.8]{caputi_diagrammatic_2025}. This bicomplex is defined over
a poset in~\cite{gerstenhaber_schack_1983}, for diagrams over a small category
as in~\cite[\S2]{gerstenhaber_simplicial_1983}, and over a general finite
category in~\cite[eq.~(4)]{caputi_diagrammatic_2025}. Here $A$ is read as a
presheaf on $\catO^{\op}$, which is the convention
of~\cite{caputi_diagrammatic_2025}, and the Baues--Wirsching complexes of
$\catO$ and of $\catO^{\op}$ with the transported natural system are
isomorphic up to sign, since $F(\catO^{\op}) \cong F\catO$. The group $H^{1}$
computed below is the cohomology of that row for its horizontal
differential, which is the Baues--Wirsching one, i.e.\ it lives in bidegree
$(1,1)$ of the bicomplex. It is not the Gerstenhaber--Schack cohomology in
total degree~$2$, and no vertical (Hochschild) differential is used here. No
result of~\cite{caputi_diagrammatic_2025} enters a proof in this paper. That
work is cited for the identification just recalled, which concerns the case
$B = A$ and not the case computed below, and for the convention on the twisted
arrow category. The relation of the bicomplex to the Baues--Wirsching
cohomology is controlled by a spectral sequence attributed to
Robinson~\cite{robinson_cohomology_2008}
in~\cite[\S1, Prop.~3.10 and Thm.~3.11]{caputi_diagrammatic_2025}. That
sequence takes the vertical differential first, so that its $E_2$ page has
Hochschild cohomology groups as coefficients, and we do not use it here.
\end{remark}

\begin{lemma}\label{lem:cocycle}
The cochain $c$ of~\eqref{eq:defect} satisfies the $1$-cocycle identity
\[
c(\psi'\psi) = A(\psi')\,c(\psi) + c(\psi')\,B(\psi)
\]
for all composable $\psi,\psi'$, and $c = 0$ if and only if $g$ is
meaningful.
\end{lemma}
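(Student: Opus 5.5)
The plan is to treat the two claims separately, and in both to use only the functoriality of $A$ and $B$ together with the identity $c = d^{0}g^{*}$ recorded after~\eqref{eq:H0}.

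\textbf{The cocycle identity.} Let $\psi\colon\omega\to\omega'$ and $\psi'\colon\omega'\to\omega''$. Rather than verify the identity for $c_g$ alone, I will show that every coboundary $d^{0}h$ with $h\in C^{0}$ is a derivation; the claim is then the case $h=g^{*}$. By functoriality,
\[
(d^{0}h)(\psi'\psi) = A(\psi')A(\psi)h_\omega - h_{\omega''}B(\psi')B(\psi).
\]
I add and subtract the mixed term $A(\psi')h_{\omega'}B(\psi)$. This splits the right hand side as
\[
A(\psi')\bigl(A(\psi)h_\omega - h_{\omega'}B(\psi)\bigr) + \bigl(A(\psi')h_{\omega'} - h_{\omega''}B(\psi')\bigr)B(\psi),
\]
which is exactly $A(\psi')\,(d^{0}h)(\psi) + (d^{0}h)(\psi')\,B(\psi)$. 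The only point needing care is bookkeeping. The composites must land in $\Hom_k(B(\omega),A(\omega''))$, and the action of the natural system must match the convention $f\mapsto A(\alpha)fB(\beta)$ fixed above. With $\alpha=\psi'$ acting on the left and $\beta=\psi$ acting on the right, it does.

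\textbf{The characterisation.} We have $c(\psi)=0$ for every $\psi$ if and only if $g^{*}$ is a natural transformation $B\Rightarrow A$. The discussion after Definition~\ref{def:diagalg} shows this is equivalent to the naturality of the maps $g_\omega$. That discussion rests on two facts: $k^{(-)}$ is faithful, since a map is recovered from indicator functions of its fibres, and Proposition~\ref{prop:tjur}(1) applies because discrete finite spaces separate points. By Definition~\ref{def:meaningful}, naturality of the $g_\omega$ is meaningfulness of $g$.

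I expect no real obstacle in either step. The most delicate point is the faithfulness step, namely that
\[
A(\psi)g_\omega^{*} = g_{\omega'}^{*}B(\psi)
\quad\text{implies}\quad
g_\omega\circ\Theta(\psi) = \Lambda(\psi)\circ g_{\omega'}.
\]
Both sides of the first equality are $(f_1)^{*}$ and $(f_2)^{*}$ for the two composite maps $f_1,f_2\colon\Theta(\omega')\to\Lambda(\omega)$. To compare them, I evaluate both on the indicator of each point of $\Lambda(\omega)$, which forces $f_1=f_2$.
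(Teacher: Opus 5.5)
Your proposal is correct and follows the paper's proof. You use the same add-and-subtract of the mixed term $A(\psi')g^{*}_{\omega'}B(\psi)$, stated for an arbitrary $h\in C^{0}$, which is the paper's remark that the identity is $d^{1}d^{0}=0$. For the characterisation you make the same appeal to the faithfulness of $k^{(-)}$ and to Proposition~\ref{prop:tjur}(1), as in the discussion after Definition~\ref{def:diagalg}.
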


\begin{proof}
We have $c = d^{0}g^{*}$ by~\eqref{eq:defect}, and the identity is
$d^{1}d^{0} = 0$. Explicitly, since $A$ and $B$ are functors, adding and
subtracting the term $A(\psi')g^{*}_{\omega'}B(\psi)$ we obtain
\[
A(\psi'\psi)g^{*}_{\omega} - g^{*}_{\omega''}B(\psi'\psi)
= A(\psi')\bigl(A(\psi)g^{*}_{\omega} - g^{*}_{\omega'}B(\psi)\bigr)
+ \bigl(A(\psi')g^{*}_{\omega'} - g^{*}_{\omega''}B(\psi')\bigr)B(\psi).
\]
Finally $c = 0$ means that $g^{*}$ is natural, which holds if and only if $g$
is natural, as we have seen after Definition~\ref{def:diagalg}.
\end{proof}

Let $g$ be a deterministic design-indexed quantity, given on the objects of
$\catO$ only. We call it a \emph{raw family}. An \emph{admissible class} is a
subset $\mathcal{C} \subseteq C^{0}$, and a \emph{correction} of $g$ in
$\mathcal{C}$ is an element $h \in \mathcal{C}$ such that $g^{*} + h$ is
natural.

\begin{proposition}\label{wprop:obstruction}
Let $g$ be a raw family and let $\mathcal{C} \subseteq C^{0}$ be an admissible
class.
\begin{enumerate}
\item There is a correction of $g$ in $\mathcal{C}$ if and only if
$c_g = -\,d^{0}h$ for some $h \in \mathcal{C}$.
\item We have $c_g \in B^{1}$ for every raw family, hence $[c_g] = 0$ in
$H^{1}$ with no hypothesis. If $\mathcal{C} = C^{0}$ then $h = -g^{*}$ is
always a correction.
\item Suppose that $\mathcal{C}$ is a linear subspace. The obstruction to the
existence of a correction of $g$ in $\mathcal{C}$ is the class of $g^{*}$ in
$C^{0}/(\mathcal{C} + \Nat(B,A))$. The differential $d^{0}$ identifies this
quotient with $B^{1}/d^{0}(\mathcal{C})$, i.e.\ with the cokernel of $d^{0}$
restricted to $\mathcal{C}$ and corestricted to $B^{1}$. The image of the
obstruction in $H^{1} = Z^{1}/B^{1}$ is zero.
\end{enumerate}
\end{proposition}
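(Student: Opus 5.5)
The whole statement is linear algebra on the cochain complex once we record that $d^{0}$ is $k$-linear and that $c_g = d^{0}g^{*}$ by~\eqref{eq:defect}. I will prove the three parts in order, each time rewriting naturality of a $0$-cochain as membership in $\ker d^{0} = \Nat(B,A)$, which is~\eqref{eq:H0}.

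For part~(1), I note that $g^{*}+h$ is natural if and only if $d^{0}(g^{*}+h)=0$. By linearity this is $d^{0}g^{*} + d^{0}h = 0$, that is, $c_g = -d^{0}h$. Quantifying over $h \in \mathcal{C}$ gives the equivalence.

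For part~(2), the equality $c_g = d^{0}g^{*}$ with $g^{*}\in C^{0}$ places $c_g$ in $B^{1}$ by definition of the coboundaries. Since $B^{1} \subseteq Z^{1}$ (Lemma~\ref{lem:cocycle}, i.e.\ $d^{1}d^{0}=0$), the class of $c_g$ in $H^{1}$ vanishes. Taking $h=-g^{*}$ gives $g^{*}+h=0$, which is trivially natural, so $h$ is a correction whenever $-g^{*}\in\mathcal{C}$. This holds in particular for $\mathcal{C}=C^{0}$.

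For part~(3), let $\mathcal{C}$ be a subspace. A correction exists if and only if $g^{*} = n - h$ with $n\in\Nat(B,A)$ and $h\in\mathcal{C}$, i.e.\ if and only if $g^{*}\in\mathcal{C}+\Nat(B,A)$. So the obstruction is the class of $g^{*}$ in $C^{0}/(\mathcal{C}+\Nat(B,A))$. To identify this quotient, I consider the surjection $d^{0}\colon C^{0}\to B^{1}$, whose kernel is $\Nat(B,A)$, so that $C^{0}/\Nat(B,A)\cong B^{1}$. Under this isomorphism the image of $\mathcal{C}+\Nat(B,A)$ is $d^{0}(\mathcal{C})$, and the third isomorphism theorem gives
\[
C^{0}/(\mathcal{C}+\Nat(B,A))\;\cong\; B^{1}/d^{0}(\mathcal{C}).
\]
This carries the class of $g^{*}$ to the class of $c_g$. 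Finally, the map $B^{1}/d^{0}(\mathcal{C}) \to Z^{1}/B^{1}$ induced by the inclusion $B^{1}\subseteq Z^{1}$ is zero, because $B^{1}$ maps to $0$ in $H^{1}$; hence the obstruction has zero image in $H^{1}$.

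I expect no real obstacle. The only point needing care is the bookkeeping in part~(3): one must check that the class of $g^{*}$ corresponds to the class of $c_g$, and not of $-c_g$, under the isomorphism. This is immediate because $d^{0}$ is applied to $g^{*}$ itself, and in any case the sign is irrelevant for whether the class vanishes.
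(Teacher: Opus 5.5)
Your proposal is correct and follows the paper's proof essentially step by step. Like the paper, you read naturality of $g^{*}+h$ as $d^{0}(g^{*}+h)=0$, take $h=-g^{*}$ for part~(2), and for part~(3) use the isomorphism $C^{0}/\ker d^{0}\cong B^{1}$, which carries $\mathcal{C}+\Nat(B,A)$ onto $d^{0}(\mathcal{C})$ and has $B^{1}$ as the denominator of $H^{1}$.
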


\begin{proof}
Let $h \in C^{0}$. Then $g^{*}+h$ is natural if and only if
$d^{0}(g^{*}+h) = 0$, i.e.\ if and only if $c_g = -\,d^{0}h$. This gives
part~(1). Part~(2) follows from $c_g = d^{0}g^{*}$ and from the case
$h = -g^{*}$. For part~(3), we have $c_g = -\,d^{0}h$ with
$h \in \mathcal{C}$ if and only if
$g^{*} \in \mathcal{C} + \ker d^{0} = \mathcal{C} + \Nat(B,A)$. The map $d^{0}$
induces an isomorphism $C^{0}/\ker d^{0} \cong B^{1}$ which carries
$\mathcal{C} + \ker d^{0}$ onto $d^{0}(\mathcal{C})$, and $B^{1}$ is the
denominator of $H^{1}$.
\end{proof}

\begin{remark}\label{rem:obstruction}
A $0$-cochain is a family of linear maps. Hence $g^{*}+h$ is the dual of a
design-indexed quantity only when it is a family of homomorphisms of algebras,
and this has to be ensured by $\mathcal{C}$. The class of the recalibrations
of Example~\ref{ex:oneway-findings} does so by construction. By part~(2) an
admissible class must be a proper subset of $C^{0}$ in order to carry an
obstruction. By part~(3) the group $H^{1}(\catO;\Hom_k(B,A))$ is not an
obstruction group for the quantities. It is an invariant of the design scheme
and of the target $\Lambda$, and Example~\ref{ex:oneway-h1} gives a case where
it is not zero. All the computations are finite because the complex is finite
dimensional in each degree. For a linear subspace $\mathcal{C}$, they are
linear algebra, and for a finite class which is not linear, such as the
recalibrations, they are combinatorics (Proposition~\ref{prop:recalib}). Both
$c_g$ and the correction $h$, when it exists, are computed exactly.
\end{remark}

The obstruction of part~(3) becomes a cohomology class when the complex is
taken relative to the admissible class.

\begin{proposition}\label{prop:relative}
Let $C^{\bullet} = C^{\bullet}(\catO;D)$ be the Baues--Wirsching complex
introduced above and let $\mathcal{C} \subseteq C^{0}$ be a linear subspace.
Let $C^{\bullet}_{\mathcal{C}} \subseteq C^{\bullet}$ be the subcomplex with
$C^{0}_{\mathcal{C}} = \mathcal{C}$ and $C^{n}_{\mathcal{C}} = C^{n}$ for
$n \ge 1$. Then
\[
H^{0}(C^{\bullet}_{\mathcal{C}}) = \mathcal{C} \cap \Nat(B,A),
\qquad
H^{1}(C^{\bullet}_{\mathcal{C}}) = Z^{1}/d^{0}(\mathcal{C}),
\qquad
H^{n}(C^{\bullet}_{\mathcal{C}}) = H^{n}(\catO;\Hom_k(B,A))
\ \text{ for } n \ge 2,
\]
and the short exact sequence of complexes
$0 \to C^{\bullet}_{\mathcal{C}} \to C^{\bullet} \to C^{0}/\mathcal{C} \to 0$,
where the quotient is concentrated in degree~$0$, gives the exact sequence
\begin{equation}\label{eq:relativeLES}
0 \to \mathcal{C} \cap \Nat(B,A) \to \Nat(B,A) \to C^{0}/\mathcal{C}
\xrightarrow{\ \delta\ } H^{1}(C^{\bullet}_{\mathcal{C}}) \to
H^{1}(\catO;\Hom_k(B,A)) \to 0
\end{equation}
with connecting map $\delta[h] = [d^{0}h]$. Furthermore:
\begin{enumerate}
\item for a raw family $g$ we have $\delta[g^{*}] = [c_g]_{\mathcal{C}}$, the
class of the defect in $H^{1}(C^{\bullet}_{\mathcal{C}})$, and $g$ has a
correction in $\mathcal{C}$ if and only if $[c_g]_{\mathcal{C}} = 0$. The
correction is unique modulo $\mathcal{C} \cap \Nat(B,A)$, hence it is unique
when $\mathcal{C} \cap \Nat(B,A) = 0$;
\item the group $B^{1}/d^{0}(\mathcal{C})$ of
Proposition~\ref{wprop:obstruction}(3) is the image of $\delta$, which is the
kernel of the comparison map
$H^{1}(C^{\bullet}_{\mathcal{C}}) \to H^{1}(\catO;\Hom_k(B,A))$. The absolute
group is the cokernel of $\delta$, i.e.\ the part of
$H^{1}(C^{\bullet}_{\mathcal{C}})$ which is not reached by any quantity.
\end{enumerate}
\end{proposition}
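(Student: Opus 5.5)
The plan is to compute the cohomology of $C^{\bullet}_{\mathcal{C}}$ degree by degree, and then obtain~\eqref{eq:relativeLES} from the short exact sequence of complexes. First I check that $C^{\bullet}_{\mathcal{C}}$ is a subcomplex. The only nontrivial point is that $d^{0}(\mathcal{C}) \subseteq C^{1} = C^{1}_{\mathcal{C}}$, which is automatic. In degree $0$ the cocycles are $\ker(d^{0}|_{\mathcal{C}}) = \mathcal{C} \cap \ker d^{0} = \mathcal{C}\cap\Nat(B,A)$ by~\eqref{eq:H0}. In degree $1$ the cocycles are $Z^{1}$, unchanged, and the coboundaries are $d^{0}(\mathcal{C})$, so $H^{1}(C^{\bullet}_{\mathcal{C}}) = Z^{1}/d^{0}(\mathcal{C})$. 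For $n\ge 2$ both the cocycles and the coboundaries $d^{n-1}(C^{n-1})$ coincide with the absolute ones, since $C^{n-1}_{\mathcal{C}} = C^{n-1}$ for $n-1\ge1$.

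Next I consider the sequence $0\to C^{\bullet}_{\mathcal{C}}\to C^{\bullet}\to C^{0}/\mathcal{C}\to 0$. It is degreewise exact: in degree $0$ it is $0\to\mathcal{C}\to C^{0}\to C^{0}/\mathcal{C}\to0$, and in degrees $n\ge1$ it is $0\to C^{n}\xrightarrow{=}C^{n}\to 0\to 0$. The quotient complex has zero differential, so its cohomology is $C^{0}/\mathcal{C}$ in degree $0$ and zero otherwise. The long exact sequence then reads
\[
0\to H^{0}(C^{\bullet}_{\mathcal{C}})\to H^{0}\to C^{0}/\mathcal{C}\xrightarrow{\delta} H^{1}(C^{\bullet}_{\mathcal{C}})\to H^{1}\to 0,
\]
where the final $0$ is $H^{1}$ of the quotient. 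This is~\eqref{eq:relativeLES}. The connecting map is computed by the standard recipe: lift $[h]$ to $h\in C^{0}$, apply $d^{0}$, and observe that $d^{0}h$ lies in $C^{1}_{\mathcal{C}}$. Hence $\delta[h]=[d^{0}h]_{\mathcal{C}}$, and well-definedness is visible directly, because changing $h$ by an element of $\mathcal{C}$ changes $d^{0}h$ by an element of $d^{0}(\mathcal{C})$.

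For part~(1), the formula gives $\delta[g^{*}] = [d^{0}g^{*}]_{\mathcal{C}} = [c_g]_{\mathcal{C}}$. This class vanishes if and only if $c_g = d^{0}h'$ for some $h'\in\mathcal{C}$, that is, if and only if $c_g=-d^{0}h$ with $h=-h'\in\mathcal{C}$, since $\mathcal{C}$ is linear. By Proposition~\ref{wprop:obstruction}(1) this is exactly the existence of a correction in $\mathcal{C}$. For uniqueness, if $h_{1},h_{2}$ are two corrections, then $d^{0}(h_{1}-h_{2})=0$ and $h_{1}-h_{2}\in\mathcal{C}$, so $h_{1}-h_{2}\in\mathcal{C}\cap\Nat(B,A)$.

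For part~(2), exactness at $H^{1}(C^{\bullet}_{\mathcal{C}})$ identifies the image of $\delta$ with the kernel of the comparison map. That image is $\{[d^{0}h]: h\in C^{0}\} = B^{1}/d^{0}(\mathcal{C})$ inside $Z^{1}/d^{0}(\mathcal{C})$. Surjectivity of the comparison map then identifies $H^{1}$ with the cokernel of $\delta$. Finally, the identification with the group of Proposition~\ref{wprop:obstruction}(3) is immediate from its proof.

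Everything here is routine homological algebra, so I expect no real obstacle. The only place needing care is the sign convention in $\delta$ and in the correction $c_g=-d^{0}h$, which I would make consistent using the linearity of $\mathcal{C}$.
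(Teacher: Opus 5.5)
Your proposal is correct and follows the paper's proof essentially step by step. It uses the same degreewise computation of the cohomology of $C^{\bullet}_{\mathcal{C}}$, the same long exact sequence with the connecting map obtained by lifting to $C^{0}$ and applying $d^{0}$, and the same reading of exactness at $H^{1}(C^{\bullet}_{\mathcal{C}})$ for part~(2). The only cosmetic difference is in part~(1). You argue directly through Proposition~\ref{wprop:obstruction}(1) and the linearity of $\mathcal{C}$, whereas the paper invokes exactness at $C^{0}/\mathcal{C}$ together with Proposition~\ref{wprop:obstruction}(3); both routes are immediate.
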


\begin{proof}
Note that $C^{\bullet}_{\mathcal{C}}$ is a subcomplex of $C^{\bullet}$ because
$d^{0}(\mathcal{C}) \subseteq C^{1}$ and its terms of positive degree are those
of $C^{\bullet}$. The quotient is $C^{0}/\mathcal{C}$ in degree~$0$ and zero
elsewhere, with zero differential. The cohomology of the subcomplex is
$\mathcal{C} \cap \ker d^{0}$ in degree~$0$, $Z^{1}/d^{0}(\mathcal{C})$ in
degree~$1$ and $H^{n}(C^{\bullet})$ for $n \ge 2$, and
$\ker d^{0} = \Nat(B,A)$ by~\eqref{eq:H0}. The long exact sequence of the
short exact sequence of complexes is~\eqref{eq:relativeLES}, since the
cohomology of the quotient is zero in positive degrees. Its connecting map
lifts $[h] \in C^{0}/\mathcal{C}$ to $h \in C^{0}$ and applies $d^{0}$, whose
values belong to $C^{1} = C^{1}_{\mathcal{C}}$, hence
$\delta[h] = [d^{0}h]$. The exactness at $C^{0}/\mathcal{C}$ says that
$\delta[h] = 0$ if and only if $h \in \mathcal{C} + \Nat(B,A)$. The exactness
at $H^{1}(C^{\bullet}_{\mathcal{C}})$ says that the image of $\delta$, which
is $B^{1}/d^{0}(\mathcal{C})$, is the kernel of the comparison map, which is
surjective because $Z^{1}$ maps onto $Z^{1}/B^{1}$. This gives part~(2).
Part~(1) follows from the exactness at $C^{0}/\mathcal{C}$ applied to
$h = g^{*}$, since $d^{0}g^{*} = c_g$, and from
Proposition~\ref{wprop:obstruction}(3). If $h,h' \in \mathcal{C}$ are two
corrections of the same $g$ then $g^{*}+h$ and $g^{*}+h'$ are natural, hence
$h - h' \in \mathcal{C} \cap \Nat(B,A)$.
\end{proof}

\subsection{The one-way layout as test case}

We now compute the objects introduced above for the one-way layout. We first
give the setting (Example~\ref{ex:oneway}), then we describe what is computed
and how, and finally we give the results
(Propositions~\ref{prop:canonical}, \ref{prop:recalib}
and~\ref{prop:oneway-cont-recalib},
Examples~\ref{ex:oneway-findings} and~\ref{ex:oneway-h1}).

\begin{example}\label{ex:oneway}
We restrict Example~\ref{ex:oneway-setup} to two covariate spaces, namely
$\omega_2 = \{1,2\}$ (two treatment groups) and $\omega_1 = \{*\}$ (the groups
merged). We denote by $\catO_{\mathrm{arr}}$ the category with these two
objects and with the merge $p \colon \omega_2 \to \omega_1$ as the only
morphism different from the identities. We denote by $\catO_{\mathrm{full}}$
the full subcategory of $\FinSet$ on the same two objects. Its morphisms
different from the identities are $p$, the injections
$i_1, i_2 \colon \omega_1 \to \omega_2$ (selection of a group), the swap
$\sigma$ (relabelling of the treatments) and the constants $c_k = i_k p$. Thus
$\catO_{\mathrm{full}}$ has eight morphisms, with $p\,i_k = \id$ and
$\sigma i_1 = i_2$. We consider both categories because the merge alone does
not detect the failure that we want to study (see
Example~\ref{ex:oneway-findings}(i)).

We identify $\omega_1 = \{*\}$ with $\{1\}$. Then $i_1$ is the insertion
$\{1\} \subsetneq \{1,2\}$, which corresponds by
Lemma~\ref{lem:tjurshadow} to Tjur's reduction to the first group, and
$i_2 = \sigma i_1$ is that insertion followed by a relabelling. The reduction
to the second group is the insertion $\{2\} \subsetneq \{1,2\}$, which is
represented by $i_2$ up to the same identification. Neither $\{1\}$ nor
$\{2\}$ is an object of $\catO_{\mathrm{full}}$, so that the insertions of
Section~\ref{sec:meaningful} are represented in $\catO_{\mathrm{full}}$ and
not contained in it, and ``the insertion $i_k$'' below is always meant in
this sense. By the cocycle identity of Lemma~\ref{lem:cocycle} we have
$c(i_2) = A(\sigma)c(i_1) + c(\sigma)$, hence nothing depends on the
identification that we choose.

We discretise the parameters. Let $\Theta(\omega) = M^{\omega} \times S$, where
$M$ and $S$ are finite sets of admissible means and dispersions. Then
$\Theta(p)(\mu,s) = ((\mu,\mu),s)$ is the diagonal,
$\Theta(i_k)((\mu_1,\mu_2),s) = (\mu_k,s)$ is the $k$-th coordinate and
$\Theta(\sigma)$ is the swap. Hence
\[
A(\omega_1) = k^{M \times S}, \qquad
A(\omega_2) = k^{M^{2} \times S}, \qquad
A(\psi) = \Theta(\psi)^{*},
\]
so that $A(p)$ is the restriction to the diagonal and $A(i_k)f$ is $f$ as a
function of the $k$-th coordinate. We take as $g$ the \emph{marginal}, or
total, dispersion under balanced allocation,
\[
g_{\omega_1}(\mu,s) = s,
\qquad
g_{\omega_2}\bigl((\mu_1,\mu_2),s\bigr) = s + \tfrac{(\mu_1-\mu_2)^2}{4}.
\]
It is the analogue for the one-way layout of Tjur's $\alpha + \beta\bar x$ of
Example~\ref{ex:alphabetaxbar}. Like that quantity, it is a parameter to which
a term is added that depends on the groups included in the design. The target
$\Lambda$ is constant at the finite set
$T \coloneqq g_{\omega_2}\bigl(\Theta(\omega_2)\bigr) \subset \mathbb{Q}$ of
the values of $g$, which contains $S = g_{\omega_1}(\Theta(\omega_1))$, with
$\Lambda(\psi) = \id_{T}$. Hence $B(\omega) = k^{T}$ for both objects and
$B(\psi) = \id$.

The Baues--Wirsching complex is finite dimensional. We use the normalised
subcomplex, which has the same cohomology as the complex
of~\cite[(1.4)]{baues_cohomology_1985}, as for every cosimplicial abelian
group. For both categories
\[
C^{0} = \Hom_k(B(\omega_1), A(\omega_1)) \oplus \Hom_k(B(\omega_2),
A(\omega_2)),
\qquad
(d^{0}h)(\psi) = A(\psi)h_\omega - h_{\omega'}B(\psi).
\]
The group $C^{1}_{\mathrm{norm}}$ has one component over
$\catO_{\mathrm{arr}}$, namely $\Hom_k(B(\omega_2),A(\omega_1))$ at
$\psi = p$, and six components over $\catO_{\mathrm{full}}$, one for each of
$p, i_1, i_2, \sigma, c_1, c_2$. The differential $d^{1}$ is the operator
whose vanishing is the cocycle identity of Lemma~\ref{lem:cocycle}, and
$H^{1} = \ker d^{1}/\operatorname{im} d^{0}$. Over $\catO_{\mathrm{arr}}$ no
two morphisms different from the identities can be composed, hence
$d^{1} = 0$ and $H^{1}$ is the cokernel of $d^{0}$. Thus everything is a
finite computation with matrices once $M$, $S$ and $g$ are fixed.
\end{example}

\subsubsection*{What is computed}

The computations are carried out by two scripts, in exact arithmetic over
$\mathbb{Q}$.\footnote{Scripts and committed outputs are available at
\url{https://github.com/Vaxx66/calamus}, in the release tagged
\texttt{v35}, which also contains the version of this manuscript to which
the numbers quoted below refer; a fresh run reproduces the committed outputs
byte for byte. The repository describes the outputs of the scripts in
detail.} Given one of the two categories, the finite sets $M$ and $S$ and the
pair $(g_{\omega_1}, g_{\omega_2})$, the first script computes the defect $c$
of~\eqref{eq:defect}, the dimensions of $C^{1}$, $Z^{1}$, $B^{1}$ and $H^{1}$,
a particular solution $h$ of $c = -\,d^{0}h$ with the corrected quantity, and
the partition of $T$ of Proposition~\ref{prop:recalib}. It is run five times.
Run~1 has $\catO_{\mathrm{arr}}$, $M = \{0,1\}$ and $S = \{1,2\}$. Run~2 has
$\catO_{\mathrm{full}}$ with the same $M$ and $S$. Run~3 has
$\catO_{\mathrm{full}}$, $M = \{0,1\}$ and $S = \{1,\tfrac54\}$. Check~A is
run~1 with $g_{\omega_2}$ replaced by
$g'_{\omega_2}((\mu_1,\mu_2),s) = s + \mu_1/4$, which does not agree with
$g_{\omega_1}$ on the diagonal. Check~B has $\catO_{\mathrm{full}}$,
$M = \{0,1,2\}$ and $S = \{1,2\}$. The second script computes $H^{1}$ again in
an independent way, through the reduced coefficient system of
Example~\ref{ex:oneway-h1}. All the numbers quoted below are read from the
committed outputs. The statements which hold for all $M$ and $S$ are proved,
and the runs are instances of them.

\subsubsection*{Results}

We first identify the within-group dispersion as the correction of the
marginal dispersion, which is canonical with respect to a given class.

\begin{proposition}\label{prop:canonical}
In the setting of Example~\ref{ex:oneway}, over $\catO_{\mathrm{full}}$ and for
any $M$ and $S$, let
\[
\mathcal{C}_{1} \;\coloneqq\; \{\,h \in C^{0} : h_{\omega_1} = 0\,\}
\]
be the class of corrections which leave the component at the merged design
unchanged. Then $\mathcal{C}_{1} \cap \Nat(B,A) = 0$, so that a correction in
$\mathcal{C}_{1}$ is unique when it exists. A deterministic raw family
$g = (g_{\omega_1},g_{\omega_2})$ with target constant at $T$ has a correction
in $\mathcal{C}_{1}$ if and only if $g_{\omega_1}$ does not depend on the
mean, i.e.\ if and only if
$g_{\omega_1} \circ \Theta(i_1) = g_{\omega_1} \circ \Theta(i_2)$. In this case,
the corrected family is deterministic, with component
$g_{\omega_1} \circ \Theta(i_1)$ at $\omega_2$. For the marginal dispersion, the
corrected component at $\omega_2$ is the within-group dispersion
$((\mu_1,\mu_2),s) \mapsto s$.
\end{proposition}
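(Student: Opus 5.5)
The plan is to work directly with the naturality equations $d^{0}(g^{*}+h) = 0$ over $\catO_{\mathrm{full}}$, using that $B$ is constant ($B(\psi) = \id$), so that a $0$-cochain $n = (n_{\omega_1}, n_{\omega_2})$ is natural if and only if $A(\psi)\,n_{\omega} = n_{\omega'}$ for every $\psi \colon \omega \to \omega'$. By the pasting argument of Corollary~\ref{cor:decidable} it suffices to impose this at the generators $p$, $i_1$, $\sigma$ (the constants $c_k = i_k p$ and $i_2 = \sigma i_1$ follow).

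\emph{Uniqueness.} Let $n \in \mathcal{C}_{1} \cap \Nat(B,A)$, so $n_{\omega_1} = 0$. Naturality at $i_1 \colon \omega_1 \to \omega_2$ gives $n_{\omega_2} = A(i_1)\,n_{\omega_1} = 0$. Hence $\mathcal{C}_{1} \cap \Nat(B,A) = 0$, and by Proposition~\ref{prop:relative}(1) a correction in $\mathcal{C}_{1}$ is unique when it exists.

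\emph{Existence criterion.} A correction $h \in \mathcal{C}_{1}$ keeps the component $g^{*}_{\omega_1}$, so the only candidate is $n_{\omega_2} = A(i_1)\,g^{*}_{\omega_1}$, forced by naturality at $i_1$. We then check the remaining generators.
\begin{itemize}
\item At $i_2$ we need $A(i_2)\,g^{*}_{\omega_1} = A(i_1)\,g^{*}_{\omega_1}$. For each indicator $e_t \in k^{T}$ this says $[g_{\omega_1}(\mu_2,s)=t] = [g_{\omega_1}(\mu_1,s)=t]$ for all $(\mu_1,\mu_2,s)$, i.e.\ $g_{\omega_1}\circ\Theta(i_1) = g_{\omega_1}\circ\Theta(i_2)$. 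This is also equivalent to $g_{\omega_1}$ not depending on $\mu$, since $\Theta(i_1)$ and $\Theta(i_2)$ are jointly the identity on $M^{2}\times S$ read coordinatewise.
\item At $p$ we need $A(p)A(i_1)\,g^{*}_{\omega_1} = A(i_1 p)\,g^{*}_{\omega_1} = g^{*}_{\omega_1}$. Because $\Theta(p)\circ\Theta(i_1) = \Theta(p\, i_1) = \id$ (from $p\,i_1 = \id$ and contravariance), this holds automatically.
\item At $\sigma$ we need $A(\sigma)\,n_{\omega_2} = A(i_2)\,g^{*}_{\omega_1} = n_{\omega_2}$, using $\sigma i_1 = i_2$ and the $i_2$ condition.
\end{itemize}
So the necessary condition at $i_2$ is also sufficient. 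Conversely, if the condition fails, no $h \in \mathcal{C}_1$ works, since $n_{\omega_2}$ was forced.

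\emph{Determinism and the example.} The corrected component $n_{\omega_2} = (g_{\omega_1}\circ\Theta(i_1))^{*}$ is the pullback along a map, so it is an algebra homomorphism and the corrected family is deterministic (cf.\ Remark~\ref{rem:obstruction}). Its values lie in $S \subseteq T$, so the target is respected. For the marginal dispersion, $g_{\omega_1}(\mu,s) = s$ is mean-free, and $g_{\omega_1}\circ\Theta(i_1)((\mu_1,\mu_2),s) = s$, which is the within-group dispersion.

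The main obstacle is bookkeeping rather than substance. One must get the variance of $A$ right, so that $A(i_1)$ goes $\omega_1 \to \omega_2$ on algebras and $A(p)$ goes $\omega_2 \to \omega_1$. One must also justify reducing the check to generators. Finally, the translation between linear equalities on indicator functions and equalities of maps needs care; this is where faithfulness of $k^{(-)}$, noted after Definition~\ref{def:diagalg}, is used.
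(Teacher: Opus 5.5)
Your proof is correct and is essentially the paper's argument: naturality at $i_1$ forces $n_{\omega_2}=A(i_1)\,n_{\omega_1}$, which gives both $\mathcal{C}_1\cap\Nat(B,A)=0$ and the forced candidate $A(i_1)\,g^{*}_{\omega_1}$; the square at $i_2$ is then exactly the stated condition (via faithfulness of $k^{(-)}$), and the squares at $p$ and $\sigma$ follow from $p\,i_1=\id$ and $\sigma i_1=i_2$. There are two small slips to fix. First, at $p$ the covariance of $A$ gives $A(p)A(i_1)=A(p\,i_1)$, and the composite that must be the identity is $\Theta(i_1)\circ\Theta(p)=\Theta(p\,i_1)$ (your $\Theta(p)\circ\Theta(i_1)$ is $\Theta(c_1)$, i.e.\ $((\mu_1,\mu_2),s)\mapsto((\mu_1,\mu_1),s)$). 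Second, for a general $g$ the corrected values lie in $g_{\omega_1}(\Theta(\omega_1))\subseteq T$, not necessarily in $S$.
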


\begin{proof}
Since $\Lambda$ is constant we have $B(\psi) = \id$, and the equations
$d^{0}h = 0$ at $i_1$ and $i_2$ read $h_{\omega_2} = A(i_k)h_{\omega_1}$.
Hence a natural $0$-cochain is determined by its component at $\omega_1$, so
that $\mathcal{C}_{1} \cap \Nat(B,A) = 0$, and the uniqueness follows from
Proposition~\ref{prop:relative}(1). Let $h \in \mathcal{C}_{1}$ be a
correction. The natural family $g^{*}+h$ has component $g^{*}_{\omega_1}$ at
$\omega_1$, hence its component at $\omega_2$ is
$A(i_1)g^{*}_{\omega_1} = A(i_2)g^{*}_{\omega_1}$. This is the stated
condition, because $k^{(-)}$ is faithful and
$A(i_k)g^{*}_{\omega_1} = (g_{\omega_1}\circ\Theta(i_k))^{*}$. Conversely,
suppose that the condition holds and set $n_{\omega_1} = g^{*}_{\omega_1}$ and
$n_{\omega_2} = A(i_1)g^{*}_{\omega_1}$. The equations $d^{0}n = 0$ hold at
the generators $i_1, i_2, p, \sigma$. Indeed they hold at $i_1$ by definition,
at $i_2$ by the condition, at $p$ because $A(p)A(i_1) = A(p\,i_1) = \id$, and
at $\sigma$ because $A(\sigma)A(i_1) = A(\sigma i_1) = A(i_2)$. Hence $n$ is
natural, $h = n - g^{*}$ belongs to $\mathcal{C}_{1}$, and $n$ is the dual of
the deterministic family $(g_{\omega_1},\, g_{\omega_1}\circ\Theta(i_1))$. For
the marginal dispersion $g_{\omega_1}(\mu,s) = s$ does not depend on $\mu$,
and $g_{\omega_1}\circ\Theta(i_1)$ is $((\mu_1,\mu_2),s) \mapsto s$.
\end{proof}

\begin{proof}[Proof of Theorem~\ref{thm:main-obstruction}]
The first two statements are Proposition~\ref{prop:relative}(1) and~(2). The
statements on the one-way layout are Proposition~\ref{prop:canonical}.
\end{proof}

We now consider the \emph{recalibrations} of $g$, i.e.\ the families
$r \circ g = (r_{\omega_1}g_{\omega_1},\, r_{\omega_2}g_{\omega_2})$, where
$r = (r_{\omega_1},r_{\omega_2})$ is a pair of maps $T \to T$. A recalibration
expresses the quantity on a different scale of values.

\begin{proposition}\label{prop:recalib}
In the setting of Example~\ref{ex:oneway}, let $\sim$ be the equivalence
relation on $T$ generated by $s \sim s + (\mu_1-\mu_2)^{2}/4$ for $s \in S$
and $\mu_1,\mu_2 \in M$. The recalibration $r \circ g$ is natural over
$\catO_{\mathrm{full}}$ if and only if $r_{\omega_2}$ is constant on each
class of $\sim$ and $r_{\omega_1} = r_{\omega_2}$ on $S$. Furthermore:
\begin{enumerate}
\item there is a natural recalibration with $r_{\omega_2}$ not constant if and
only if $\sim$ has at least two classes;
\item if $|M| \ge 2$ then no natural recalibration has $r_{\omega_2}$
injective, i.e.\ the marginal dispersion cannot be expressed on another scale
in a natural way without loss of information;
\item over $\catO_{\mathrm{arr}}$ the recalibration $r \circ g$ is natural if
and only if $r_{\omega_1} = r_{\omega_2}$ on $S$.
\end{enumerate}
\end{proposition}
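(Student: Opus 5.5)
Since $\Lambda$ is constant at $T$ with $\Lambda(\psi)=\id_T$, and all the spaces are finite and discrete, I will use Proposition~\ref{prop:tjur}(1) to reduce naturality of the deterministic family $r\circ g$ to the equalities of maps
\[
r_{\omega}\,g_{\omega}\circ\Theta(\psi) \;=\; r_{\omega'}\,g_{\omega'}
\qquad\text{on } \Theta(\omega') \text{ for every } \psi\colon\omega\to\omega'.
\]
By the pasting argument of Corollary~\ref{cor:decidable} it is enough to check these at the generators $p, i_1, i_2, \sigma$ of $\catO_{\mathrm{full}}$, since $c_k = i_k p$. I will write each condition explicitly. At $\sigma$ it reads $r_{\omega_2}g_{\omega_2}((\mu_2,\mu_1),s) = r_{\omega_2}g_{\omega_2}((\mu_1,\mu_2),s)$, which is automatic because $g_{\omega_2}$ is symmetric. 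At $p\colon\omega_2\to\omega_1$, with $\Theta(p)(\mu,s)=((\mu,\mu),s)$, it reads $r_{\omega_2}(s) = r_{\omega_1}(s)$ for all $s\in S$, since $g_{\omega_2}$ vanishes its correction term on the diagonal. At $i_k\colon\omega_1\to\omega_2$ it reads $r_{\omega_1}(s) = r_{\omega_2}\bigl(s+(\mu_1-\mu_2)^2/4\bigr)$ for all $\mu_1,\mu_2\in M$, $s\in S$.

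Combining the condition at $p$ with that at $i_k$, I will get $r_{\omega_2}(s) = r_{\omega_2}(s+(\mu_1-\mu_2)^2/4)$, which is exactly constancy of $r_{\omega_2}$ on the generating pairs of $\sim$, hence on its classes. Every element of $T$ has the form $s+(\mu_1-\mu_2)^2/4$, so every element of $T$ is related to some element of $S$. Conversely, constancy on classes together with $r_{\omega_1}=r_{\omega_2}$ on $S$ recovers all four conditions. This proves the main equivalence. For (3), over $\catO_{\mathrm{arr}}$ only the condition at $p$ survives, which gives the statement at once.

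For (1), I will argue as follows. If $\sim$ has at least two classes, I take $r_{\omega_2}$ to send each class to a distinct element of $T$, which is possible since $|T|$ is at least the number of classes, and I take $r_{\omega_1} = r_{\omega_2}$ on $S$; this $r_{\omega_2}$ is not constant. If there is only one class, then the main equivalence forces $r_{\omega_2}$ to be constant. For (2), if $|M|\ge2$, I choose $\mu_1\ne\mu_2$. For any $s\in S$ the elements $s$ and $s+(\mu_1-\mu_2)^2/4$ are then distinct and equivalent, so $r_{\omega_2}$ cannot be injective.

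I expect no real obstacle here. The only delicate points are checking that the generators suffice, which follows from the pasting argument, and observing that $T$ is saturated by $S$, so that the classes of $\sim$ cover $T$.
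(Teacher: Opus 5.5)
Your proposal is correct and follows essentially the same route as the paper. You reduce to the squares at the generators $p, i_1, i_2, \sigma$ (with $c_k = i_k p$ handled by pasting), read off $r_{\omega_1}(s) = r_{\omega_2}\bigl(s + (\mu_1-\mu_2)^{2}/4\bigr)$ at $i_k$, deduce constancy on the classes of $\sim$, and obtain parts~(1)--(3) exactly as the paper does. The only cosmetic difference is that you get $r_{\omega_1} = r_{\omega_2}$ on $S$ from the square at $p$, whereas the paper takes $\mu_1 = \mu_2$ in the square at $i_k$ and so notes that the square at $p$ is already implied.
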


\begin{proof}
The square~\eqref{eq:tjursquare} at $i_k$ reads
$r_{\omega_1}(g_{\omega_1}(\Theta(i_k)\theta)) =
r_{\omega_2}(g_{\omega_2}(\theta))$ for all $\theta = ((\mu_1,\mu_2),s)$, i.e.\
$r_{\omega_1}(s) = r_{\omega_2}\bigl(s + (\mu_1-\mu_2)^{2}/4\bigr)$, and these
are the same equations for $k = 1,2$. The case $\mu_1 = \mu_2$ gives
$r_{\omega_1} = r_{\omega_2}$ on $S$. Substituting, we obtain
$r_{\omega_2}(s) = r_{\omega_2}(s + (\mu_1-\mu_2)^{2}/4)$ for every generating
pair of $\sim$, i.e.\ $r_{\omega_2}$ is constant on the classes, since every
element of $T$ is of the form $s + (\mu_1-\mu_2)^{2}/4$. Conversely, these two
conditions give the squares at $i_1$ and $i_2$. The square at $p$ is the case
$\mu_1 = \mu_2$, the square at $\sigma$ holds for every $r$ because
$g_{\omega_2}$ is symmetric, and the squares at $c_k = i_k p$ follow by
pasting. Part~(1) holds because $r_{\omega_2}$ is not constant if and only if
it separates two classes. For part~(2), let $\mu_1 \neq \mu_2$ in $M$ and
$s \in S$. Then $s$ and $s + (\mu_1-\mu_2)^{2}/4$ are distinct elements of the
same class, on which an injective $r_{\omega_2}$ cannot be constant. Part~(3)
is the square at $p$ alone.
\end{proof}

\begin{example}\label{ex:oneway-findings}
\emph{(i) The merge alone detects nothing (run~1 and check~A).} Over
$\catO_{\mathrm{arr}}$ the marginal dispersion is natural, i.e.\ its defect
is zero, because the between-group term $(\mu_1-\mu_2)^2/4$ vanishes on the
diagonal $\Theta(p)$. This is a property of the quantity and not of the
category. Indeed, in check~A the same category is given a quantity which does
not agree with $g_{\omega_1}$ on the diagonal. The script finds a defect
different from zero at $p$ and corrects it, and the particular solution leaves
$g'_{\omega_2}$ unchanged and replaces the component at the merged design with
$(\mu,s) \mapsto s + \mu/4$. The cohomology detects nothing either, for every
$M$ and $S$. Indeed $B(p) = \id$ gives that $d^{0}$ is surjective onto
$C^{1}_{\mathrm{norm}} = \Hom_k(B(\omega_2),A(\omega_1))$, and $d^{1} = 0$. In
run~1 we have $\dim C^{1} = \dim Z^{1} = \dim B^{1} = 16$ and $H^{1} = 0$.

\emph{(ii) The quantity is not meaningful, and a deterministic correction is
found (run~2).} Over $\catO_{\mathrm{full}}$ with $M = \{0,1\}$ and
$S = \{1,2\}$, so that $T = \{1,\tfrac54,2,\tfrac94\}$, the defect is
different from zero on $\{i_1,i_2,c_1,c_2\}$ and it is zero on $p$ and
$\sigma$. The cocycle identity holds entry by entry on all the $22$ composable
pairs. Thus the failure is detected by the selections $i_1, i_2$ of a group and
not by the merge, i.e.\ it is detected by the criterion of Tjur, which
quantifies over the removals of sample points only. The defect at the
insertion $i_1$ is a violation of his condition at the reduction to the first
group (Lemma~\ref{lem:tjurshadow}). Here $\dim C^{1} = 176$,
$\dim Z^{1} = \dim B^{1} = 40$ and $H^{1} = 0$. The group vanishes although the
defect does not, in agreement with Proposition~\ref{wprop:obstruction}(2), and
its vanishing gives no information on the quantity. Solving $c = -\,d^{0}h$
over the whole group $C^{0}$, which is always possible by the same part, we
find a corrected family which is again deterministic. The solution obtained by
setting to zero the free parameters of the row-reduced system has the
\emph{within-group} dispersion $((\mu_1,\mu_2),s) \mapsto s$ as component at
$\omega_2$, i.e.\ the between-group term is removed. The set of all the
solutions is the affine space $\,-g^{*} + \Nat(B, A)$, which contains other
deterministic elements, so that this solution is not canonical among all the
corrections. It is canonical with respect to the class $\mathcal{C}_{1}$ of
Proposition~\ref{prop:canonical}. Indeed, a natural family over
$\catO_{\mathrm{full}}$ is determined by its component at $\omega_1$, so that a
corrected family whose component at $\omega_2$ is the within-group dispersion
leaves $g_{\omega_1}$ unchanged, i.e.\ the correction belongs to
$\mathcal{C}_{1}$. Thus the statistically sensible correction is the unique
one which does not change the design on which $g$ is already a within-group
dispersion.

\emph{(iii) An obstruction different from zero with respect to an admissible
class (runs~2, 3 and check~B).} We take as admissible the recalibrations
$r \circ g$ with $r_{\omega_2}$ not constant, so that the quantity remains a
non-trivial function of the observed marginal dispersion. The statistically
natural class is the smaller one of the injective recalibrations, which
preserve the information, and by Proposition~\ref{prop:recalib}(2) it contains
no natural element for any grid with $|M| \ge 2$. We keep the larger class
because it is the weakest one which can carry an obstruction, so that an
obstruction with respect to it is not a consequence of injectivity. By
Proposition~\ref{prop:recalib}, naturality forces $r_{\omega_2}$ to be constant
on the classes of the partition of $T$ generated by
$s \sim s + (\mu_1-\mu_2)^2/4$, and the existence of a natural recalibration is
decided by computing this partition. We say that the design is
\emph{scale-confounded} if some marginal value $s + (\mu_1-\mu_2)^2/4$ with
$\mu_1 \neq \mu_2$ is itself an admissible within-group dispersion, i.e.\ it
belongs to $S$. With $M = \{0,1\}$ and $S = \{1,\tfrac54\}$ (run~3, where
$T = \{1,\tfrac54,\tfrac32\}$), or with $S = \{1,2\}$ and three levels of the
mean $M = \{0,1,2\}$ (check~B), the partition has a single class. Hence every
natural recalibration is constant, and the obstruction with respect to this
class is not zero, i.e.\ no recalibration of the marginal dispersion which
carries information is natural. With $M = \{0,1\}$ and $S = \{1,2\}$ as
in~(ii) the partition has the two classes $\{1,\tfrac54\}$ and
$\{2,\tfrac94\}$ (run~2), and there is a natural recalibration which is not
constant, but no bijective one. In runs~2, 3 and in check~B the unrestricted
correction of~(ii) gives the within-group dispersion, i.e.\ the correction in
$\mathcal{C}_{1}$ of Proposition~\ref{prop:canonical}.
\end{example}

The finite runs discretise a statement which holds in the Gaussian layout
itself, where it is simpler.

\begin{proposition}[Continuous case]\label{prop:oneway-cont-recalib}
Let $\Theta(\omega) = \R^{\omega} \times \R_{>0}$ as in
Example~\ref{ex:oneway-setup}, let $g$ be the marginal dispersion, i.e.\
$g_{\omega_1}(\mu,s) = s$ and
$g_{\omega_2}((\mu_1,\mu_2),s) = s + (\mu_1-\mu_2)^{2}/4$, with target constant
at $T = \R_{>0} = g_{\omega_2}(\Theta(\omega_2))$, and let
$r_{\omega_1}, r_{\omega_2} \colon T \to T$ be any maps. If $r \circ g$
satisfies Tjur's condition at the reduction to the first group, i.e.\ if the
square~\eqref{eq:tjursquare} at $i_1$ commutes, then
$r_{\omega_1} = r_{\omega_2}$ is constant. Hence no recalibration of the
marginal dispersion, which is not constant, is Tjur-natural, and a fortiori none
is meaningful.
\end{proposition}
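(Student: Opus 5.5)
The plan is to mimic the proof of Proposition~\ref{prop:recalib}, now with the continuous parameter space. First I will write out the square~\eqref{eq:tjursquare} at $i_1$. The target is constant, so $\Lambda(i_1)=\id_T$. The recalibrated quantity is deterministic and $\R_{>0}$ separates points, so by Proposition~\ref{prop:tjur}(1) the square is the pointwise equation
\[
r_{\omega_1}\bigl(g_{\omega_1}(\Theta(i_1)\theta)\bigr) \;=\; r_{\omega_2}\bigl(g_{\omega_2}(\theta)\bigr),
\qquad \theta=((\mu_1,\mu_2),s)\in\R^{2}\times\R_{>0}.
\]
Since $\Theta(i_1)\theta=(\mu_1,s)$, this reads
\[
r_{\omega_1}(s)=r_{\omega_2}\bigl(s+(\mu_1-\mu_2)^2/4\bigr)
\qquad\text{for all } s>0,\ \mu_1,\mu_2\in\R.
\]

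Next I will extract the two consequences. Taking $\mu_1=\mu_2$ gives $r_{\omega_1}(s)=r_{\omega_2}(s)$ for every $s>0$, so $r_{\omega_1}=r_{\omega_2}$ on all of $T$. Substituting back, I obtain $r_{\omega_2}(s)=r_{\omega_2}(s+t)$ for all $s>0$ and $t\ge 0$, because $(\mu_1-\mu_2)^2/4$ ranges over all of $[0,\infty)$ as $\mu_1,\mu_2$ range over $\R$.

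Finally, for any two values $a<b$ in $T$ I will take $s=a$ and $t=b-a$, which gives $r_{\omega_2}(a)=r_{\omega_2}(b)$. So $r_{\omega_2}$ is constant, and hence so is $r_{\omega_1}$. In the language of Proposition~\ref{prop:recalib}, the equivalence relation $\sim$ now has a single class.

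The remaining statements are then direct. Lemma~\ref{lem:tjurshadow} and Example~\ref{ex:oneway} identify the square at $i_1$ as Tjur's condition at the reduction to the first group. Therefore a non-constant recalibration fails Tjur-naturality, and by Proposition~\ref{prop:tjur}(2) it also fails meaningfulness. I do not expect a real obstacle here. The only point that needs care is noting that the increments $(\mu_1-\mu_2)^2/4$ cover all of $[0,\infty)$; in the discrete case this is exactly the step that could leave several classes.
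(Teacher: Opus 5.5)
Your proof is correct and is essentially the paper's argument: specialising to $\mu_1=\mu_2$ gives $r_{\omega_1}=r_{\omega_2}$, and the fact that $(\mu_1-\mu_2)^2/4$ covers $[0,\infty)$ then makes $r_{\omega_2}$ constant (the paper takes $\mu_1-\mu_2=2\sqrt{s'-s}$ directly). One nitpick: the detour through Proposition~\ref{prop:tjur}(1) presupposes that $r$ is measurable, so that $r\circ g$ is a morphism of $\Stoch$, whereas the statement allows arbitrary maps; you should read the square directly as the pointwise equation, as the paper does, and nothing else changes.
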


\begin{proof}
The square at $i_1$ reads
$r_{\omega_1}(s) = r_{\omega_2}\bigl(s + (\mu_1-\mu_2)^{2}/4\bigr)$ for all
$s > 0$ and all real $\mu_1,\mu_2$. For $\mu_1 = \mu_2$ it gives
$r_{\omega_1} = r_{\omega_2}$. For $0 < s < s'$, choosing
$\mu_1 - \mu_2 = 2\sqrt{s'-s}$ we obtain
$r_{\omega_2}(s) = r_{\omega_2}(s')$.
\end{proof}

Note that no measurability is used and that the class of the recalibrations is
not restricted in advance. In the continuous case every admissible marginal
value is confounded with every within-group dispersion. The partition of $T$
in the finite runs discretises this argument. Its two classes in
run~2 depend on the grid, where the only available step is
$(1-0)^{2}/4 = \tfrac14$ and $\tfrac54 \notin S$ interrupts the chain from $1$
to $2$. A third level of the mean (check~B) or a scale-confounded $S$ (run~3)
restores the connection. Thus the finite computation is an exact certificate
for the given grid, and the continuous statement says what the grids
approximate.

\begin{example}\label{ex:oneway-h1}
We compute $H^{1}(\catO;\Hom_k(B,A))$, see
Proposition~\ref{wprop:obstruction}(3). For $\catO_{\mathrm{arr}}$ it is zero
in every run, as it must be by
Example~\ref{ex:oneway-findings}(i). For $\catO_{\mathrm{full}}$ it is zero
with two levels of the mean (runs~2 and~3), and it is not zero with three
levels. With $M = \{0,1,2\}$, $S = \{1,2\}$ and $\Lambda$ constant at $T$ as
above, so that $|T| = 5$, check~B gives $\dim C^{1} = 480$,
$\dim Z^{1} = 120$ and $\dim B^{1} = 110$, hence
$\dim_{k} H^{1}(\catO_{\mathrm{full}};\Hom_k(B,A)) = 10 = 2\,|T|$. The group
depends on $\Lambda$ as well as on $\catO$. Indeed, when $B$ is constant with
$B(\psi) = \id$ it is the direct sum of $|T|$ copies of the cohomology with
coefficients $\psi \mapsto A(\operatorname{cod}\psi)$. This coefficient system
is the pullback along $F\catO \to \catO$ of the $\catO$-module
$A$~\cite[(1.18)(2)]{baues_cohomology_1985}, hence
by~\cite[(8.5)(B)]{baues_cohomology_1985}
\[
H^{1}\bigl(\catO;\Hom_k(B,A)\bigr) \;\cong\; \bigl(\varprojlim\nolimits^{(1)}_{\catO} A\bigr)^{\oplus |T|},
\]
i.e.\ $|T|$ copies of the first derived limit of the parameter diagram, and
$\Lambda$ appears only through $|T|$. The second script computes the reduced
group directly and confirms this factorisation. Its dimension is $2$ for
$|M| = 3$ (and $2 \cdot 5 = 10$) and $6$ for $|M| = 4$ (where $|T| = 7$, and
$6 \cdot 7 = 42$), so that the group grows with the number of the levels of
the mean. Its algebraic meaning is the usual one for a first derived limit.
When $B = A$ the cocycle identity of Lemma~\ref{lem:cocycle} says that
$A(\psi) + \varepsilon\, c(\psi)$ is functorial modulo $\varepsilon^{2}$, and
$d^{0}h$ is the deformation induced by the change of coordinates
$1 + \varepsilon h_\omega$ at each object. Hence $H^{1}$ is the space of the
first-order deformations of the parameter diagram, as a diagram of vector
spaces, modulo these changes of coordinates. We do not know its statistical
meaning.
\end{example}
\section{The ambient category for the Bayesian shape}\label{sec:ambient}

In Sections~\ref{sec:bayes} and~\ref{sec:ridge} we use states, conditionals and
almost sure equality, which do not appear in the previous sections. They are
synthetic notions of Fritz~\cite{fritz_synthetic_2020} and of Cho and
Jacobs~\cite{cho_disintegration_2019}, and we recall them here with the
references to the two sources.

\subsection{States and the category \texorpdfstring{$\BorelStoch$}{BorelStoch}}

Let $(\cat C,\otimes,I)$ be a Markov category. The unit $I$ is terminal, and a
\emph{state} on $X$ is a morphism $I \rightsquigarrow X$. In $\FinStoch$ a
state is a probability vector~\cite[Ex.~2.5]{fritz_synthetic_2020}, and in
$\Stoch$ it is a probability measure. The deterministic morphisms are those of
Definition~\ref{def:deterministic}~\cite[Def.~10.1]{fritz_synthetic_2020}.

The category $\Stoch$ is not suitable for our purposes, for two reasons. Its
deterministic morphisms need not come from
$\Meas$~\cite[Ex.~10.4]{fritz_synthetic_2020}, and conditionals need not
exist~\cite[Ex.~11.3]{fritz_synthetic_2020}. Both problems disappear in
$\BorelStoch$, the full subcategory of $\Stoch$ of the \emph{standard Borel}
spaces, i.e.\ of the measurable spaces which are isomorphic to a Polish space
with its Borel $\sigma$-algebra, equivalently to a Borel subset of
$\R$~\cite[p.~21]{fritz_synthetic_2020},
\cite[p.~12]{cho_disintegration_2019}. It contains $I$, and it is closed under
$\otimes$, hence it is a Markov category, and it is the Kleisli category of the
Giry monad restricted to the standard Borel
spaces~\cite[p.~12]{cho_disintegration_2019}. Its deterministic morphisms are
the measurable maps~\cite[Ex.~10.5]{fritz_synthetic_2020}, so that $\del$
restricted to the standard Borel spaces is fully faithful onto them. Note that
Kolmogorov products exist in $\BorelStoch$ only for countable index
sets~\cite[p.~21]{fritz_synthetic_2020},
\cite[Ex.~3.6]{fritz_rischel_infinite_2020} (see
Remark~\ref{rem:definetti}).

Throughout Sections~\ref{sec:bayes} and~\ref{sec:ridge} the ambient category is
$\BorelStoch$, and we use Theorem~\ref{thm:bridge-meas} only for the models
whose parameter objects $\Theta_\Meas(\omega)$ and sample spaces
$\Gamma(v,u)$ are standard Borel, so that $\mathbf{L}$, $\mathbf{R}$ and the
components $\mathbf P_{v,d}$ belong to $\BorelStoch$. This holds in the finite
one-way scheme of Example~\ref{ex:oneway}, where every object is finite, and in
Example~\ref{ex:linreg}, where $\Theta(\omega) = \omega^{*}\times\R_{>0}$ and
$\Gamma(\R,u) = \R^{u}$. All the statements below which do not use
conditioning, i.e.\ all except the posterior of Definition~\ref{def:prior} and
Proposition~\ref{prop:ridge}, hold in $\Stoch$ and, in the finite scheme, in
$\FinStoch$. Sections~\ref{sec:bayes} and~\ref{sec:ridge} do not depend on the
appendix.

\subsection{Conditionals and Bayesian inversion}

Let $\nu \colon I \rightsquigarrow X \otimes Y$ be a state, which we think of as
a joint distribution, and let $\nu_X$ and $\nu_Y$ be its marginals, i.e.\ its
compositions with the discard maps of $Y$ and of $X$ respectively. A
\emph{conditional} of $\nu$ given $X$ is a morphism
$\nu|_X \colon X \rightsquigarrow Y$ such that
\begin{equation}\label{eq:conditional}
\nu \;=\; (\id_X \otimes \nu|_X) \circ \mathrm{copy}_X \circ \nu_X ,
\end{equation}
and a conditional of $\nu$ given $Y$ is a morphism
$\nu|_Y \colon Y \rightsquigarrow X$ such that
$\nu = (\nu|_Y \otimes \id_Y) \circ \mathrm{copy}_Y \circ \nu_Y$. The latter is
a conditional given the first factor of the composition of $\nu$ with the
symmetry $X \otimes Y \cong Y \otimes X$. Cho and Jacobs call these morphisms
\emph{disintegrations} of $\nu$~\cite[Def.~3.5]{cho_disintegration_2019}, and a
Markov category \emph{has conditional distributions} if every state on a
product has them~\cite[Def.~11.1]{fritz_synthetic_2020}. In $\Stoch$ the two
equations read
\[
\nu(S \times T) \;=\; \int_{S} \nu|_X(T \mid x)\,\nu_X(\mathrm{d}x)
\;=\; \int_{T} \nu|_Y(S \mid y)\,\nu_Y(\mathrm{d}y)
\qquad (S \in \Sigma_X,\ T \in \Sigma_Y),
\]
i.e.\ $\nu|_X$ and $\nu|_Y$ are regular conditional probabilities of $\nu$
given the first and the second
coordinate~\cite[Ex.~11.3, eq.~(11.3)]{fritz_synthetic_2020},
\cite[Ex.~3.7]{cho_disintegration_2019}. Such kernels need not exist for
arbitrary measurable spaces, and $\nu|_X$ exists when $Y$ is standard
Borel~\cite[Thm.~3.11]{cho_disintegration_2019}. Hence $\BorelStoch$ has
conditional distributions~\cite[Ex.~11.3]{fritz_synthetic_2020},
\cite[Cor.~3.12]{cho_disintegration_2019}, while $\Stoch$ does not.

Let $\alpha \colon I \rightsquigarrow X$ be a state. Two morphisms
$f,g \colon X \rightsquigarrow Y$ are called \emph{$\alpha$-a.s.\ equal} if
$(\id_X \otimes f)\circ\mathrm{copy}_X\circ\alpha =
(\id_X \otimes g)\circ\mathrm{copy}_X\circ\alpha$~\cite[Def.~5.1]{cho_disintegration_2019},
\cite[Def.~13.1]{fritz_synthetic_2020}. In $\Stoch$ this means that
$f(T\mid -)$ and $g(T \mid -)$ agree $\alpha$-almost everywhere for every
$T \in \Sigma_Y$~\cite[Prop.~5.3]{cho_disintegration_2019},
\cite[Ex.~13.3]{fritz_synthetic_2020}. By definition, two conditionals of $\nu$
given $X$ are $\nu_X$-a.s.\ equal, and by symmetry two conditionals given $Y$
are $\nu_Y$-a.s.\ equal~\cite[Prop.~5.2]{cho_disintegration_2019},
\cite[Prop.~13.7]{fritz_synthetic_2020}.

Let now $\pi \colon I \rightsquigarrow X$ be a state and let
$k \colon X \rightsquigarrow Y$ be a morphism. We set
\[
\nu \;\coloneqq\; (\id_X \otimes k) \circ \mathrm{copy}_X \circ \pi
\colon I \rightsquigarrow X \otimes Y
\qquad\text{and}\qquad
m \;\coloneqq\; k \circ \pi \colon I \rightsquigarrow Y .
\]
Then $\nu_X = \pi$, $\nu_Y = m$ and $k$ is a conditional of $\nu$ given $X$. A
conditional of $\nu$ given $Y$ is called a \emph{Bayesian inversion} of $k$ with
respect to $\pi$, and it is denoted by
$k^{\dagger}_{\pi} \colon Y \rightsquigarrow X$. Thus $k^{\dagger}_{\pi}$ is a
morphism such that
\begin{equation}\label{eq:inversion}
(\id_X \otimes k)\circ\mathrm{copy}_X\circ\pi
\;=\;
(k^{\dagger}_{\pi} \otimes \id_Y)\circ\mathrm{copy}_Y\circ m
\end{equation}
(see~\cite[eq.~(5)]{cho_disintegration_2019}
and~\cite[eq.~(13.2)]{fritz_synthetic_2020}). In statistical terms, $X$ is the
parameter space, $Y$ is the sample space, $\pi$ is the prior, $k$ is the model,
$\nu$ is their joint distribution, $m$ is the prior predictive distribution and
$k^{\dagger}_{\pi}$ is the posterior. In $\Stoch$
equation~\eqref{eq:inversion} is Bayes' theorem in the form
\[
\int_{S} k(T\mid x)\,\pi(\mathrm{d}x) \;=\;
\int_{T} k^{\dagger}_{\pi}(S \mid y)\,m(\mathrm{d}y)
\qquad (S \in \Sigma_X,\ T \in \Sigma_Y)
\]
\cite[Ex.~3.9]{cho_disintegration_2019}, where both sides are the probability
that the parameter belongs to $S$ and the observation to $T$. Since a Bayesian
inversion is a conditional, it exists in $\BorelStoch$, and it is unique up to
$m$-a.s.\ equality. We write $k^{\dagger}$ when the prior is fixed.

\section{Priors as states, coherence as reduction-naturality}\label{sec:bayes}

In Sections~\ref{sec:bridge}--\ref{sec:hochschild} the parameter side of a
bridged model is deterministic, and meaningfulness is a condition on the
functionals of the parameter. In this section we consider the priors, which are
families of states on the parameter objects, and we prove parts~(1) and~(2) of
Theorem~\ref{thm:main-priors}. We shall see that the two criteria of
Section~\ref{sec:meaningful} play different roles for them. The definitions
and the results up to Remark~\ref{rem:coherence} hold for every bridged model
which satisfies the convention of Section~\ref{sec:ambient}. We then apply
them to the finite one-way scheme, since the priors given by the rules of
Jeffreys for the Gaussian one-way layout are improper; hence they are not
states (see Proposition~\ref{wprop:jeffreys}), and in Section~\ref{sec:ridge}
to the Gaussian linear model.

\subsection{Conventions}

Throughout this section, a response scale $v \in \ob\catV$ and a unit object
$u \in \ob\catU$ are fixed, and only the covariate space varies. Let
$\catD_{u} \subseteq \catD$ be the subcategory of the designs $(u,\omega,x)$
and of the morphisms $(\id_{u},\psi)$, for which~\eqref{eq:commacond} reads
$x' = J\psi \circ x$. Thus the target design is determined by $x$ and $\psi$.
Suppose that every $\omega$ admits a design map from $u$, as in
Example~\ref{ex:oneway-setup}. Then a family indexed by $\ob\catO$ is natural
over $\catO$ if and only if its pullback to $\catD_{u}$ is natural, and for
this reason we state the conditions over $\catO$ although the models are
defined over $\catD$. On $\catD_{u}^{\op}$ the sample side of the bridged model
is constant, i.e.\ $\mathbf{R}(u,\omega,x) = \Gamma(v,u)$ and
$\mathbf{R}\bigl((\id_{u},\psi)^{\op}\bigr) = \id$, and we write
$\mathbf{R}(\omega) = \Gamma(v,u)$ and $\mathbf{R}(\psi) = \id$. The components
of the model depend on the design map, and we write
\[
\mathbf P_{\omega} \;\coloneqq\; \mathbf P_{v,d}
\quad\text{for a design } d = (u,\omega,x) \in \ob\catD_{u} \text{ over }
\omega.
\]
Each identity below is stated for every such $d$, with target design
$d' = (u,\omega',J\psi \circ x)$. We do not choose one design for each
covariate space in a coherent way, and in general this is not possible. Indeed
in the one-way layout the two selections
$i_{1},i_{2} \colon \{*\} \to \{1,2\}$ would give $i_{1} \circ x = i_{2} \circ x$,
which fails for $u \neq \emptyset$. With these conventions, the naturality of
the bridged model $\mathbf P$ of Theorem~\ref{thm:bridge-meas} at
$\psi \colon \omega \to \omega'$ is the square~\eqref{eq:natsquare} at
$(\id_{u},\psi)$, i.e.
\begin{equation}\label{eq:natfibre}
\mathbf{R}(\psi) \circ \mathbf P_{\omega'}
\;=\; \mathbf P_{\omega} \circ \del\Theta_\Meas(\psi)
\qquad\text{in } \Stoch .
\end{equation}
By~\eqref{eq:pull} it is condition (N-$\catO$) of Section~\ref{sec:worked},
written for the kernels $\mathbf P_{v,d}$ instead of the maps $P_{v,d}$. We
keep the factor $\mathbf{R}(\psi)$, which is an identity here, because in this
form the proofs below hold over $\catD$, where $\mathbf{R}$ acts by
marginalisation. The quantifier classes
$\catO_{\mathrm{red}} \subseteq \catO_{\mathrm{inj}} \subseteq \catO$ are those
of Section~\ref{sec:meaningful}.

\subsection{Priors and coherence}

\begin{definition}[Prior; coherence]\label{def:prior}
Let $\mathbf P$ be a bridged model. A \emph{prior} for $\mathbf P$ is a family
of states
\[
\pi_{\omega} \colon I \rightsquigarrow \Theta_\Meas(\omega),
\qquad \omega \in \ob\catO ,
\]
i.e.\ of probability measures on the parameter objects. The associated
\emph{Bayesian model} at $\omega$ is the joint state
$(\id \otimes \mathbf P_{\omega}) \circ \mathrm{copy} \circ \pi_{\omega}
\colon I \rightsquigarrow \Theta_\Meas(\omega) \otimes \mathbf{R}(\omega)$,
and the \emph{prior predictive} is its marginal on $\mathbf{R}(\omega)$
\[
m_{\omega} \;\coloneqq\; \mathbf P_{\omega} \circ \pi_{\omega}
\colon I \rightsquigarrow \mathbf{R}(\omega).
\]
The \emph{posterior} is the Bayesian inversion
$\mathbf P_{\omega}^{\dagger} \colon \mathbf{R}(\omega) \rightsquigarrow
\Theta_\Meas(\omega)$ of $\mathbf P_{\omega}$ with respect to $\pi_{\omega}$,
i.e.\ a conditional of the Bayesian model given $\mathbf{R}(\omega)$
(see~\eqref{eq:inversion}). It exists in $\BorelStoch$, and it is unique up to
$m_{\omega}$-a.s.\ equality.

Let $\cat{Q} \subseteq \catO$ be a subcategory which contains all the objects.
The prior is called \emph{$\cat{Q}$-coherent} if the family
$(\pi_{\omega})_{\omega}$ is $\cat{Q}$-coherent in the sense of
Definition~\ref{def:coherence} with $F = \Delta_{I}$ and
$G = \del\Theta_\Meas$, where
$\Delta_{I} \colon \catO^{\op} \to \BorelStoch$ is the constant functor at the
monoidal unit $I$. Explicitly, this means that
\begin{equation}\label{eq:priorsquare}
\pi_{\omega} \;=\; \del\Theta_\Meas(\psi)\circ\pi_{\omega'}
\;=\; \Theta_\Meas(\psi)_{*}\,\pi_{\omega'}
\end{equation}
for every $\psi \colon \omega \to \omega'$ in $\cat{Q}$. We say that the prior
is \emph{coherent at} $\psi$ if~\eqref{eq:priorsquare} holds at $\psi$, so
that $\cat{Q}$-coherence is coherence at every morphism of $\cat{Q}$. We say
that the prior is \emph{coherent} if it is $\catO_{\mathrm{red}}$-coherent,
i.e.\ natural along the insertions, which is the class of Tjur. We say that it is
\emph{relabelling-coherent} if it is $\catO_{\mathrm{inj}}$-coherent, i.e.\
natural along all the injections, which is the class of McCullagh as stated.
Since $\catO_{\mathrm{red}} \subseteq \catO_{\mathrm{inj}}$, the second
condition implies the first one.
\end{definition}

In words, a prior is coherent if the prior of a smaller design is the marginal
of the prior of every larger design which contains it.

\begin{proposition}[Structure of coherence]\label{prop:coherence}
Let $\mathbf P$ be a bridged model, let $\pi$ be a prior and let
$\cat{Q} \subseteq \catO$ be a subcategory which contains all the objects.
\begin{enumerate}
\item If~\eqref{eq:priorsquare} holds at $\psi \colon \omega \to \omega'$ then
$\pi_{\omega}$ vanishes on every measurable subset of $\Theta_\Meas(\omega)$
which is disjoint from $\operatorname{im}\Theta_\Meas(\psi)$. If the image is
measurable, this means that
$\pi_{\omega}\bigl(\operatorname{im}\Theta_\Meas(\psi)\bigr) = 1$, and in the
finite case that
$\operatorname{supp}\pi_{\omega} \subseteq \operatorname{im}\Theta_\Meas(\psi)$.
\item Let $\psi_1,\psi_2 \colon \omega \to \omega'$ be two morphisms of
$\cat{Q}$. If $\pi$ is $\cat{Q}$-coherent then
$\Theta_\Meas(\psi_1)_{*}\pi_{\omega'} = \Theta_\Meas(\psi_2)_{*}\pi_{\omega'}$.
In particular, if $\pi$ is $\catO_{\mathrm{inj}}$-coherent then $\pi_{\omega}$
is invariant under $\Theta_\Meas(\operatorname{Aut}\omega)$ for every
$\omega$, while $\catO_{\mathrm{red}}$-coherence imposes no symmetry on any
component.
\item If $\pi$ is $\cat{Q}$-coherent then
$m_{\omega} = \mathbf{R}(\psi)\circ m_{\omega'}$ for every
$\psi \colon \omega \to \omega'$ in $\cat{Q}$ and every design $d$ over
$\omega$, where $m_{\omega}$ is computed at $d$ and $m_{\omega'}$ at the
target design $d'$. Thus the prior predictives, indexed by the designs of
$\catD_{u}$, form a natural transformation $\Delta_{I} \Rightarrow \mathbf{R}$
over the morphisms $(\id_{u},\psi)$ with $\psi$ in $\cat{Q}$, and we say
that $m$ is \emph{natural over} $\cat{Q}$.
\item Suppose that, for each design $d$ over $\omega$, the map
$\alpha \mapsto \mathbf P_{\omega}\circ\alpha$ from the states on
$\Theta_\Meas(\omega)$ to the states on $\mathbf{R}(\omega)$ is injective on
a class $\cat{S}_{\omega}$ of states which contains $\pi_{\omega}$ and
satisfies $\Theta_\Meas(\psi)_{*}\cat{S}_{\omega'} \subseteq \cat{S}_{\omega}$
for every $\psi \colon \omega \to \omega'$ in $\cat{Q}$, for example on all
the states, i.e.\ when the model is identifiable in mixtures. If $m$ is
natural over $\cat{Q}$ then $\pi$ is $\cat{Q}$-coherent. In $\FinStoch$ the
injectivity on all the states means that the rows of the stochastic matrix
$\mathbf P_{\omega}$, indexed by $\Theta(\omega)$, are linearly independent.
\end{enumerate}
\end{proposition}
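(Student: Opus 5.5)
Each of the four parts reduces to unwinding \eqref{eq:priorsquare} and \eqref{eq:natfibre} in $\Stoch$, using the pushforward description \eqref{eq:push} of postcomposition with a Dirac kernel. I will treat them in order.

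For part~(1), I will evaluate \eqref{eq:priorsquare} on a measurable set $E \subseteq \Theta_\Meas(\omega)$ with $E \cap \operatorname{im}\Theta_\Meas(\psi) = \emptyset$. This gives $\pi_\omega(E) = \pi_{\omega'}(\Theta_\Meas(\psi)^{-1}E) = \pi_{\omega'}(\emptyset) = 0$. If the image is measurable, I apply this to its complement. In the finite discrete case, I apply it to each singleton outside the image, which gives the support statement.

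For part~(2), coherence at $\psi_1$ and at $\psi_2$ gives $\Theta_\Meas(\psi_1)_*\pi_{\omega'} = \pi_\omega = \Theta_\Meas(\psi_2)_*\pi_{\omega'}$. For $\catO_{\mathrm{inj}}$ I take $\omega'=\omega$, $\psi_1 = \id$ and $\psi_2 = \gamma \in \operatorname{Aut}\omega$. This is allowed because $\catO_{\mathrm{inj}}$ contains the isomorphisms, and it yields $\Theta_\Meas(\gamma)_*\pi_\omega = \pi_\omega$.

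The negative claim for $\catO_{\mathrm{red}}$ is the one step that needs a construction rather than an unwinding. The morphisms of $\catO_{\mathrm{red}}$ are the inclusions, so its only endomorphisms are identities, and the general condition of part~(2) never relates $\pi_\omega$ to $\Theta_\Meas(\gamma)_*\pi_\omega$ for $\gamma \ne \id$. To make the claim concrete, I will check in the one-way layout that the states can be chosen as a projective family under the restriction maps $\Theta(\psi)$. Concretely, I take $\pi_\omega = \bigotimes_{a\in\omega}\nu_a \otimes \rho$ with non-identical factors $\nu_a$ indexed by the labels $a$. Every restriction along an inclusion is a marginalisation, so the family is $\catO_{\mathrm{red}}$-coherent, but it is not invariant under a swap. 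I expect to phrase the claim as ``no consequence'' together with this example.

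For part~(3), I compute $\mathbf R(\psi)\circ m_{\omega'} = \mathbf R(\psi)\circ\mathbf P_{\omega'}\circ\pi_{\omega'}$. By \eqref{eq:natfibre} at the target design $d'$, this equals $\mathbf P_\omega\circ\del\Theta_\Meas(\psi)\circ\pi_{\omega'}$. By \eqref{eq:priorsquare}, it is then $\mathbf P_\omega\circ\pi_\omega = m_\omega$. The identity holds for every design $d$ over $\omega$.

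For part~(4), I run the same chain backwards. Naturality of $m$ and \eqref{eq:natfibre} give $\mathbf P_\omega\circ\pi_\omega = \mathbf P_\omega\circ(\Theta_\Meas(\psi)_*\pi_{\omega'})$. Both arguments lie in $\cat S_\omega$: the first by assumption, the second by the closure hypothesis. Injectivity on $\cat S_\omega$ then gives \eqref{eq:priorsquare}. In $\FinStoch$, states are probability vectors and $\alpha\mapsto\mathbf P_\omega\circ\alpha$ is the linear map $\alpha\mapsto\alpha^{\top}\mathbf P_\omega$. The affine span of the simplex is a hyperplane not through the origin, and every row sum of $\mathbf P_\omega$ is $1$. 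So injectivity on the simplex is equivalent to a trivial kernel, that is, to the linear independence of the rows.

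Apart from the $\catO_{\mathrm{red}}$ construction above, all steps are direct.
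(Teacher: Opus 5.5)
Your proposal is correct and follows the paper's proof essentially step by step. Part~(1) evaluates the pushforward on a set disjoint from the image. Part~(2) applies \eqref{eq:priorsquare} at $\psi_1$ and $\psi_2$, with an automorphism and the identity. Parts~(3) and~(4) use the same chain of equalities through \eqref{eq:natfibre}, and the $\FinStoch$ criterion comes from the unit row sums forcing the kernel into the zero-sum hyperplane.

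The only difference is in the $\catO_{\mathrm{red}}$ claim. There the paper merely observes that $\catO_{\mathrm{red}}$ has no distinct parallel morphisms. Your product-prior example in the one-way layout is correct, but it is an extra illustration rather than a needed step.
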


\begin{proof}
(1) Let $f$ be a measurable map and let $A$ be a measurable set which is
disjoint from $\operatorname{im} f$. Then
$(f_{*}\nu)(A) = \nu(f^{-1}A) = \nu(\emptyset) = 0$. Note that the image need
not be measurable, and for this reason the statement is given on the sets
which are disjoint from it.

(2) The first statement follows from~\eqref{eq:priorsquare} applied to
$\psi_1$ and to $\psi_2$. For the automorphisms take $\psi_1 = \sigma$ and
$\psi_2 = \id_{\omega}$. The morphisms of $\catO_{\mathrm{red}}$ are
inclusions, and an inclusion is determined by its source and its target, so
that $\catO_{\mathrm{red}}$ has no pairs of distinct parallel morphisms.

(3) Let $\psi$ be a morphism of $\cat{Q}$. By the naturality of $\mathbf P$ in
the form~\eqref{eq:natfibre} we have
\[
\mathbf{R}(\psi) \circ m_{\omega'} =
\mathbf{R}(\psi)\circ\mathbf P_{\omega'}\circ\pi_{\omega'} =
\mathbf P_{\omega}\circ\del\Theta_\Meas(\psi)\circ\pi_{\omega'} =
\mathbf P_{\omega}\circ\pi_{\omega} = m_{\omega}.
\]

(4) The same chain of equalities gives
$\mathbf P_{\omega}\circ\bigl(\Theta_\Meas(\psi)_{*}\pi_{\omega'}\bigr) =
\mathbf P_{\omega}\circ\pi_{\omega}$. Both states belong to
$\cat{S}_{\omega}$, and the result follows by the injectivity hypothesis. For
the statement on $\FinStoch$, write the states as row vectors, so that
$\mathbf P_{\omega}\circ\alpha = \alpha\mathbf P_{\omega}$. If
$v\mathbf P_{\omega} = 0$ then
$\sum_{\theta} v_{\theta} = \sum_{y}(v\mathbf P_{\omega})_{y} = 0$, because
the rows sum to one. Hence the kernel of $v \mapsto v\mathbf P_{\omega}$ is
contained in the hyperplane of the vectors with zero sum, which is spanned
by the differences of probability vectors, and the map is injective on the
states if and only if it is injective, i.e.\ if and only if the rows are
linearly independent.
\end{proof}

\begin{remark}\label{rem:coherence}
Consider the merge $p \colon \{1,2\} \to \{*\}$ of the one-way layout, for
which $\Theta(p)$ is the diagonal, both in the Gaussian model of
Example~\ref{ex:oneway-setup} and in the finite scheme of
Example~\ref{ex:oneway}. The diagonal is measurable, hence by part~(1),
if~\eqref{eq:priorsquare} holds at $p$ then $\pi_{\{1,2\}}$ is concentrated
on it, so that the prior for two groups carries no more information than the
prior for one group. For this reason~\eqref{eq:priorsquare} is imposed over
$\catO_{\mathrm{red}}$ or over $\catO_{\mathrm{inj}}$ and not over $\catO$.
Consider now part~(2) in the one-way layout, with $\omega = \{1,\dots,n\}$ and
$\Theta(\sigma)(\mu,s) = (\mu\circ\sigma, s)$ for a permutation $\sigma$. The
invariance under $\Theta_\Meas(\operatorname{Aut}\omega)$ is the invariance of
$\pi_{\omega}$ under all the permutations of the coordinates of the mean,
jointly with the dispersion, i.e.\ the finite counterpart of the
exchangeability of~\cite[Def.~4.1]{fritz_definetti_2021}, which is stated
there for a state on a countable Kolmogorov power (see
Remark~\ref{rem:definetti}). At the two injections
$\{*\} \to \{1,2\}$ part~(2) says that the two coordinate marginals of
$\pi_{\{1,2\}}$ agree, and at the relabelling $\{1\} \to \{2\}$, composed with
the inclusions into $\{1,2\}$, it says that the two priors for one group
coincide. Finally, note that the hypothesis of part~(4) is stronger than the
injectivity of $\theta \mapsto \mathbf P_{\omega}(\theta)$.
\end{remark}

\subsection{The finite one-way scheme}

\begin{proposition}[The four strata of the two-object scheme]\label{prop:priordim}
In the finite one-way scheme of Example~\ref{ex:oneway} we have
$\Theta(\omega_n) = M^{n}\times S$ with $\omega_1 = \{*\}$ and
$\omega_2 = \{1,2\}$, and the morphisms of $\catO_{\mathrm{full}}$ are the
merge $p$, the selections $i_1,i_2$, the swap $\sigma$ and the constants
$c_k = i_k p$. We identify $\omega_1$ with $\{1\}$. Then the traces on
$\catO_{\mathrm{full}}$ of the classes of Section~\ref{sec:meaningful} are
\[
\catO_{\mathrm{red}} = \langle i_1\rangle \;\subsetneq\;
\catO_{\mathrm{sel}} \coloneqq \langle i_1,i_2\rangle \;\subsetneq\;
\catO_{\mathrm{inj}} = \langle i_1,i_2,\sigma\rangle \;\subsetneq\;
\catO_{\mathrm{full}} ,
\]
where $\catO_{\mathrm{sel}}$ is not one of the classes of
Section~\ref{sec:meaningful}. It comes from the identification of the two
designs with one group $\{1\}$ and $\{2\}$ with the single object $\omega_1$,
i.e.\ from the relabelling $\{1\} \to \{2\}$ imposed without the relabelling
$\sigma$ of the design with two groups. For each of the four classes, the set of
the $\cat{Q}$-coherent priors is a polytope, of affine dimension
\[
\begin{array}{lll}
\catO_{\mathrm{red}}:  & |M|^{2}|S| - 1 & \text{(no constraint on $\pi_{\omega_2}$)},\\[2pt]
\catO_{\mathrm{sel}}:  & |M|^{2}|S| - 1 - |S|(|M|-1) & \text{(the two coordinate marginals of $\pi_{\omega_2}$ agree)},\\[2pt]
\catO_{\mathrm{inj}}:  & \tfrac12 |M|(|M|+1)\,|S| - 1 & \text{($\pi_{\omega_2}$ symmetric under the swap)},\\[2pt]
\catO_{\mathrm{full}}: & |M||S| - 1 & \text{($\pi_{\omega_2}$ supported on the diagonal)}.
\end{array}
\]
The second and the third dimensions coincide if and only if $|M| \le 2$, and
their difference is $|S|\binom{|M|-1}{2}$. Thus for $|M| = 2$ the equality of
the two marginals implies the symmetry, and for $|M| \ge 3$ it does not.
\end{proposition}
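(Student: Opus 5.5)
The plan is to reduce each $\cat{Q}$-coherence condition to linear equations on the two probability vectors $\pi_{\omega_1}\in\Delta(M\times S)$ and $\pi_{\omega_2}\in\Delta(M^2\times S)$. Since the set of coherent priors is the intersection of a product of simplices with an affine subspace, it is a polytope. It will always be nonempty: take a point mass on a diagonal point of $\Theta(\omega_2)$ together with its image at $\omega_1$. To get its dimension, I will use the structure of each class to solve for one component in terms of the other.

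By the pasting argument of Corollary~\ref{cor:decidable}, applied to~\eqref{eq:priorsquare}, it suffices to impose coherence at the listed generators. At $i_k$, \eqref{eq:priorsquare} reads $\pi_{\omega_1}=\Theta(i_k)_*\pi_{\omega_2}$, i.e.\ $\pi_{\omega_1}$ is the $k$-th coordinate marginal (jointly with $s$) of $\pi_{\omega_2}$. At $\sigma$ it says $\pi_{\omega_2}$ is swap-invariant. At $p$ it reads $\pi_{\omega_2}=\Theta(p)_*\pi_{\omega_1}$, the pushforward to the diagonal.

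In each of the four classes $\pi_{\omega_1}$ is therefore determined by $\pi_{\omega_2}$, so the polytope projects isomorphically onto the set of admissible $\pi_{\omega_2}$.
\begin{itemize}
\item For $\catO_{\mathrm{red}}$ there is no constraint on $\pi_{\omega_2}$, which gives the full simplex of dimension $|M|^2|S|-1$.
\item For $\catO_{\mathrm{sel}}$ the constraint is that the two marginals on $M\times S$ agree. This is $|M||S|$ equations, but each marginal has total mass $1$ already, and I also expect one redundancy per $s$: for fixed $s$, both marginals have the same total $\sum_{\mu_1,\mu_2}\pi(\mu_1,\mu_2,s)$. So the rank is $|S|(|M|-1)$.
\item For $\catO_{\mathrm{inj}}$, swap invariance (which implies equal marginals) leaves the simplex on the orbits of $M^2\times S$, of which there are $\tfrac12|M|(|M|+1)|S|$.
\item For $\catO_{\mathrm{full}}$, the equation at $p$ makes $\pi_{\omega_2}$ a free probability vector on the diagonal, identified with $M\times S$. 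The remaining equations at $i_k$ and $\sigma$ then hold automatically, since $p\,i_k=\id$.
\end{itemize}

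The main obstacle is the rank count for $\catO_{\mathrm{sel}}$. The equations separate over $s$, so I would work with a single $|M|\times|M|$ matrix $X$ and the map $X\mapsto X\mathbf 1 - X^{\top}\mathbf 1$ into $k^{M}$. Its image lies in the sum-zero hyperplane. To see that it fills it, I would use the matrices $E_{ab}$ with $a\ne b$, whose images are $e_a-e_b$; these span the hyperplane, so the rank is $|M|-1$. One also needs the affine subspace to meet the interior of the simplex, so that the dimension of the polytope equals that of the subspace. The uniform distribution is a relative interior point in every case, and on the diagonal in the full case.

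Finally, the difference between the $\catO_{\mathrm{sel}}$ and $\catO_{\mathrm{inj}}$ dimensions is $|S|\bigl(|M|^2-|M|+1-\tfrac12|M|(|M|+1)\bigr)=|S|\binom{|M|-1}{2}$. This vanishes exactly for $|M|\le2$, and since the $\catO_{\mathrm{inj}}$ polytope is contained in the $\catO_{\mathrm{sel}}$ one, equal dimension of the affine hulls yields equality of the two polytopes.
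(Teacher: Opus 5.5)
Your proposal is correct and follows essentially the paper's route. You eliminate $\pi_{\omega_1}$ through $i_1$, use the uniform vector (a point with positive coordinates) so that each polytope has the dimension of its affine solution space, show that the difference of the two marginalisation maps has rank $|S|(|M|-1)$, and parametrise the swap-symmetric vectors by orbits and the $\catO_{\mathrm{full}}$-coherent ones by $M\times S$. The differences are minor. The paper checks explicitly that the normalisation functional is not in the row space of the marginal constraints, since those constraints vanish on the diagonal entries; in your argument this follows from the uniform vector being a solution. Your affine-hull argument also makes explicit why equal dimensions force the $\catO_{\mathrm{sel}}$ and $\catO_{\mathrm{inj}}$ polytopes to coincide, which the paper leaves implicit.
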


\begin{proof}
In all the cases $\pi_{\omega_1}$ is determined by $\pi_{\omega_2}$
through~\eqref{eq:priorsquare} at $i_1$, so that the free variable is
$\pi_{\omega_2}$, a probability vector on $M^{2}\times S$. For the first three
classes the uniform vector on $M^{2}\times S$ satisfies the constraints below
and has positive coordinates, so that it lies in the relative interior of
the polytope, and the affine dimension of the polytope is the one of the
affine space of solutions. For $\catO_{\mathrm{full}}$ the polytope is the
image of the simplex on $M\times S$ under the injective affine map
$\pi_{\omega_1} \mapsto (\pi_{\omega_1},\Theta(p)_{*}\pi_{\omega_1})$, and
its dimension is the one of the simplex. Over $\catO_{\mathrm{red}}$ there
is no further condition. Over
$\catO_{\mathrm{sel}}$ the only further condition is
Proposition~\ref{prop:coherence}(2) at $i_1,i_2$, i.e.\ that the two coordinate
marginals of $\pi_{\omega_2}$ agree. The difference of the two marginalisation
maps $\mathbb{Q}^{M^{2}\times S} \to \mathbb{Q}^{M\times S}$ takes its values
in the subspace of the vectors with zero sum over $M$ for each $s$, and it is
onto it, hence its rank is $|S|(|M|-1)$. Its row space does not contain the
normalisation functional, since every row functional vanishes on the diagonal
entries. This gives the second dimension. Over $\catO_{\mathrm{inj}}$ the
condition at $\sigma$ is the symmetry of $\pi_{\omega_2}$ under the swap,
which implies the equality of the marginals, and the symmetric probability
vectors are parametrised by the $\tfrac12|M|(|M|+1)|S|$ orbits of the swap on
$M^{2}\times S$. Over $\catO_{\mathrm{full}}$ the condition at $p$ confines
$\pi_{\omega_2}$ to the diagonal, which has $|M||S|$ points, and then the
conditions at $i_k$, $\sigma$ and $c_k$ hold. Finally, the difference of the
second and the third dimensions is
$|S|\bigl(|M|^{2}-|M|+1-\tfrac12|M|(|M|+1)\bigr) =
|S|\tfrac12(|M|-1)(|M|-2)$.
\end{proof}

The four formulas have been verified exactly over $\mathbb{Q}$ for $|M|\le 4$
and $|S|\le 3$ by the script
\texttt{oneway\_\allowbreak prior\_\allowbreak coherence.py} (see the end of
this section).

\begin{proposition}[Jeffreys' rule is not coherent, the independence prior
is: finite analogue]\label{wprop:jeffreys}
Consider the Gaussian one-way layout, with
$\theta_{\omega} = (\mu\in\R^{\omega},\ \sigma^{2}>0)$, at a design in which
every treatment group is non-empty, so that the Fisher information is
nonsingular. Jeffreys' general rule $\pi \propto \sqrt{\det I(\theta)}$
\cite{jeffreys_invariant_1946}, \cite[p.~1345, eq.~(1)]{kass_selection_1996},
gives
\[
\pi^{J}_{\omega}(\mathrm{d}\mu, \mathrm{d}\sigma^{2}) \;\propto\;
(\sigma^{2})^{-(|\omega|+2)/2}\,\mathrm{d}\mu\,\mathrm{d}\sigma^{2},
\]
whose exponent depends on the number $|\omega|$ of the mean parameters, hence
on the design. The prior obtained by treating the location parameters
separately, $\pi^{\mathrm{iJ}}_{\omega} \propto (\sigma^{2})^{-1}$, which is
Jeffreys' own recommendation for the location--scale families
\cite[pp.~182--183]{jeffreys_theory_1961},
\cite[p.~1345, eq.~(2)]{kass_selection_1996} and is called the independent
Jeffreys prior in the later
literature~\cite[pp.~631--632]{consonni_prior_2018}, has an exponent which does
not depend on $|\omega|$. Consider the proper finite analogue, where $S$ is a
finite set of squares of rational numbers and $\pi_{\omega_n}$ is uniform in
$\mu$ with weight $s^{-(n+2)/2}$ in $s$, normalised.
\begin{enumerate}
\item A product rule
$\pi_{\omega} = \mathrm{unif}(M^{\omega}) \otimes \rho_{\omega}$ is coherent if
and only if $\rho_{\omega}$ does not depend on $\omega$. Hence the analogue of
$\pi^{\mathrm{iJ}}$ is coherent and the analogue of $\pi^{J}$ is not.
\item The defect of $\pi^{J}$ is contained in the marginal of the dispersion.
Its total variation along each of the two selections of a group is equal to
the total variation between the normalisations of $s^{-3/2}$ and $s^{-2}$ on
$S$, and it does not depend on $M$. For $M=\{0,1\}$ and $S=\{1,4\}$ the two
marginals are $(\tfrac{8}{9},\tfrac{1}{9})$ and
$(\tfrac{16}{17},\tfrac{1}{17})$, and the defect is $\tfrac{8}{153}$. Let
$\rho_{\omega_n}$ be the normalised marginal of the dispersion of
$\pi^{J}_{\omega_n}$. The defect along the merge is
\[
\tfrac12\Bigl[\bigl(1-|M|^{-1}\bigr)
 + \sum_{s\in S}\bigl|\,|M|^{-1}\rho_{\omega_2}(s)-\rho_{\omega_1}(s)\bigr|\Bigr]
\;\ge\; 1-|M|^{-1},
\]
where $1-|M|^{-1}$ is the mass of $\pi^{J}_{\omega_2}$ outside the diagonal.
The equality holds if and only if
$\rho_{\omega_1}(s) \ge |M|^{-1}\rho_{\omega_2}(s)$ for every $s$, i.e.\ if and
only if $Z_{1}/Z_{2} \le |M|\,s_{\min}^{1/2}$, where
$Z_{n} = \sum_{s} s^{-(n+2)/2}$. The bound is attained in every small case, for
example in the one above, but it is not an identity. For $|M| = 2$ and
$S = \{1/4\} \cup \{(k/100)^{2} : 105 \le k \le 229\}$ the defect along the
merge is $0.5078\ldots > \tfrac12$.
\end{enumerate}
\end{proposition}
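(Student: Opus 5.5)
The proof splits along the two numbered parts. The preamble on the Gaussian layout is a direct computation. The Fisher information of $(\mu,\sigma^{2})$ at a design with group sizes $n_a>0$ is block diagonal: the block $\operatorname{diag}(n_a)/\sigma^{2}$ for $\mu$ and the entry $n/(2\sigma^{4})$ for $\sigma^{2}$. Its square-root determinant is therefore proportional to $(\sigma^{2})^{-|\omega|/2}(\sigma^{2})^{-1}$. Treating $\mu$ as a location parameter removes the factor $(\sigma^{2})^{-|\omega|/2}$ and leaves $(\sigma^{2})^{-1}$.

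\textbf{Part (1).} We work over $\catO_{\mathrm{red}}$, whose trace on $\catO_{\mathrm{full}}$ is $\langle i_1\rangle$ by Proposition~\ref{prop:priordim}, so coherence is the single equation~\eqref{eq:priorsquare} at $i_1$. The map $\Theta(i_1)$ is the first-coordinate projection $M^{2}\times S\to M\times S$. Its pushforward of $\mathrm{unif}(M^{2})\otimes\rho_{\omega_2}$ is $\mathrm{unif}(M)\otimes\rho_{\omega_2}$, because a uniform product measure marginalises to a uniform measure. Hence coherence at $i_1$ reads $\mathrm{unif}(M)\otimes\rho_{\omega_1}=\mathrm{unif}(M)\otimes\rho_{\omega_2}$. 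Marginalising over $M$ shows this holds if and only if $\rho_{\omega_1}=\rho_{\omega_2}$. For a general finite $\catO$ the same argument applies at every insertion. For the two Jeffreys rules, $\rho_{\omega_n}\propto s^{-1}$ does not depend on $n$. By contrast, $\rho_{\omega_n}\propto s^{-(n+2)/2}$ gives different normalised vectors for $n=1,2$ as soon as $|S|\ge2$, since $s^{-3/2}$ and $s^{-2}$ are not proportional on two distinct values of $s$.

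\textbf{Part (2), selections.} I would compute both sides at $i_k$ explicitly. By the computation above, $\Theta(i_k)_*\pi^{J}_{\omega_2}=\mathrm{unif}(M)\otimes\rho_{\omega_2}$, while $\pi^{J}_{\omega_1}=\mathrm{unif}(M)\otimes\rho_{\omega_1}$. The defect therefore lives only in the dispersion factor. The total variation of $\mathrm{unif}(M)\otimes\alpha$ against $\mathrm{unif}(M)\otimes\beta$ equals $\mathrm{TV}(\alpha,\beta)$, which is independent of $M$. For $S=\{1,4\}$ the weights are $s^{-3/2}\in\{1,\tfrac18\}$ and $s^{-2}\in\{1,\tfrac1{16}\}$. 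Normalising gives $(\tfrac89,\tfrac19)$ and $(\tfrac{16}{17},\tfrac1{17})$, and the total variation is $\tfrac{16}{17}-\tfrac89=\tfrac{8}{153}$.

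\textbf{Part (2), merge.} This is the step needing care. The pushforward $\Theta(p)_*\pi^{J}_{\omega_1}$ is supported on the diagonal of $M^{2}\times S$, with mass $|M|^{-1}\rho_{\omega_1}(s)$ at $((m,m),s)$. The measure $\pi^{J}_{\omega_2}$ puts mass $|M|^{-2}\rho_{\omega_2}(s)$ at every point. Summing $\tfrac12\sum|{\cdot}|$ splits into two contributions:
\begin{itemize}
\item off the diagonal, the total is $\tfrac12(1-|M|^{-1})$;
\item on the diagonal, summing over the $|M|$ diagonal means gives $\tfrac12\sum_s\bigl||M|^{-1}\rho_{\omega_2}(s)-\rho_{\omega_1}(s)\bigr|$.
\end{itemize}
This is the displayed formula. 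For the lower bound, apply $|x|\ge -x$ termwise and sum: $\sum_s(\rho_{\omega_1}-|M|^{-1}\rho_{\omega_2})=1-|M|^{-1}$. Equality holds exactly when every term has $\rho_{\omega_1}(s)\ge|M|^{-1}\rho_{\omega_2}(s)$. Writing $\rho_{\omega_n}(s)=s^{-(n+2)/2}/Z_n$, the condition is $s^{1/2}\ge Z_1/(|M|Z_2)$ for all $s$, i.e.\ it holds at $s_{\min}$.

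The remaining checks are numerical. For the small example the condition $Z_1/Z_2=(9/8)/(17/16)\le 2$ holds, so the bound is attained. For the large $S$ with $|M|=2$, the condition fails at $s=1/4$, and the defect $0.5078\ldots$ would be computed exactly over $\mathbb{Q}$ by script. I expect this numerical verification to be the only nontrivial obstacle, and it is a finite rational computation.
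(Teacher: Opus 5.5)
Your proof is correct, and it takes a different route from the paper. The paper gives no written derivation of this proposition. It states that part~(1) and the numerical content of part~(2) (the merge formula, the cases where the bound is attained, and the strict case) are established by exact computation over $\mathbb{Q}$ in the accompanying script, and that the continuous exponent is classical.

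You instead derive everything by hand for arbitrary $M$ and $S$:
\begin{itemize}
\item A coordinate projection carries $\mathrm{unif}(M^{2})\otimes\rho$ to $\mathrm{unif}(M)\otimes\rho$. Hence coherence at $i_1$ and the defects along $i_1,i_2$ reduce to comparing $\rho_{\omega_1}$ with $\rho_{\omega_2}$, independently of $M$.
\item The merge formula comes from splitting $M^{2}\times S$ into the diagonal and its complement.
\item The bound and its equality case follow from $|x|\ge -x$ together with $\sum_s\bigl(\rho_{\omega_1}(s)-|M|^{-1}\rho_{\omega_2}(s)\bigr)=1-|M|^{-1}$.
\end{itemize}

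Your approach proves the general claims, which a script can only confirm on finitely many instances. It also makes visible that the non-coherence of the analogue of $\pi^{J}$ needs $|S|\ge 2$, a hypothesis the statement leaves implicit.

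The script is still needed, by you as by the paper, for the decimal $0.5078\ldots$. Your computation shortens that step too. With $x_s=|M|^{-1}\rho_{\omega_2}(s)-\rho_{\omega_1}(s)$, it shows that the merge defect equals $(1-|M|^{-1})+\sum_s\max(x_s,0)$. In the large example it is therefore $\tfrac12+8/Z_{2}-8/Z_{1}$, once one checks that $s=1/4$ is the only point with $x_s>0$.
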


Part~(1) and the numerical content of part~(2), i.e.\ the formula for the
merge, the cases where the bound is attained and the case where it is strict,
are established for the finite analogue by exact computation over $\mathbb{Q}$
(script \texttt{oneway\_\allowbreak prior\_\allowbreak coherence.py}). The
statement on $\pi^{J}$ and $\pi^{\mathrm{iJ}}$ in the continuous case is not a
statement on $\Stoch$. Indeed, both priors are improper; hence they are not
states, i.e.\ they are not morphisms
$I \rightsquigarrow \Theta_\Meas(\omega)$. In the continuous case we only
assert the computation of the exponent, which is classical (see
Remark~\ref{rem:open}).

The dependence of the exponent on the design is Jeffreys' own objection to his
general rule. Applied to $k$ unknown means with a common variance, the general
rule gives marginal $t$ posteriors whose degrees of freedom depend on the total
number of observations only, whatever $k$ is, and he found this
unacceptable~\cite[p.~182]{jeffreys_theory_1961} (as reported
in~\cite[p.~1345]{kass_selection_1996}). His remedy was to treat the location
parameters separately, which gives $\pi^{\mathrm{iJ}}$. Part~(1) says that the
dependence on the design is a failure of naturality along the insertions, and
that his remedy is the unique product rule which restores it. This agrees
with, but is not the same as, the preference in the objective Bayes literature
for the reference priors over the multivariate Jeffreys rule when nuisance
parameters are present. For the normal location--scale family the reference
prior for $\mu$ is $\sigma^{-1}$ and not Jeffreys'
$\sigma^{-2}$~\cite[p.~119]{bernardo_reference_1979}, the multiparameter
reference process is sequential in order to avoid the marginalisation
paradoxes~\cite[pp.~905--906]{berger_formal_2009}, and the multidimensional
Jeffreys rule is recorded as a source of incoherence and of
paradoxes~\cite[p.~631]{consonni_prior_2018}. We do not claim that coherence
characterises the reference priors.

\begin{remark}[Open problems]\label{rem:open}
The priors $\pi^{J}$ and $\pi^{\mathrm{iJ}}$ are improper, and to consider them
one has to replace $\BorelStoch$ in Definition~\ref{def:prior} and in
Proposition~\ref{prop:coherence} with a codomain which admits unnormalised
states, for example the measurable spaces with the $s$-finite kernels, or the
Kleisli category of the measure monad. We do not know whether
Proposition~\ref{prop:coherence} holds in such a codomain, whether
Proposition~\ref{wprop:jeffreys} then holds for $\pi^{J}$ and
$\pi^{\mathrm{iJ}}$ as stated in the continuous case, and whether coherence
over $\catO_{\mathrm{red}}$ singles out a recognisable class of objective
priors beyond the product rules of Proposition~\ref{wprop:jeffreys}(1). We
expect only the first question to be routine.

In the marginalisation paradoxes of Dawid, Stone and
Zidek~\cite[\S1 and Thm.~2.1, p.~198]{dawid_stone_zidek_1973} the marginal
posterior of a parameter $\zeta$ depends on the data only through a statistic
$z$ whose sampling density depends only on $\zeta$, and it is not proportional
to $\pi(\mathrm{d}\zeta)f(z\mid\zeta)$ for any prior $\pi$. These are failures of
commutation between the Bayesian inversion and the marginalisation of the
parameter induced by a design reduction, i.e.\ of squares of the
form~\eqref{eq:tjursquare} whose vertical arrows are the inversions
$\mathbf P^{\dagger}_{\omega}$. We conjecture that in a codomain which admits
unnormalised states, they are the failures of naturality of the family
$(\mathbf P^{\dagger}_{\omega})_{\omega}$ over $\catO_{\mathrm{red}}$. A
stronger statement is not possible, for two reasons. First, Dawid, Stone and
Zidek prove that the paradoxes cannot arise from proper
priors~\cite[p.~189 and p.~194, eqs.~(1.20)--(1.23)]{dawid_stone_zidek_1973},
while the incoherence of Proposition~\ref{wprop:jeffreys} already occurs for
proper priors, so that the two phenomena are not the same. Their
classification of the problems as reducible or irreducible and of the priors
as paradoxical or
paradox-free~\cite[p.~208]{dawid_stone_zidek_1973} concerns the priors for a
fixed problem, while coherence compares the priors across the designs, and it
is only conjectured that the reference priors avoid the
paradoxes~\cite[p.~123]{bernardo_reference_1979}. Second, the conjecture
cannot be stated in $\Stoch$, since improper priors are not states. Thus we do
not claim that the present framework explains the marginalisation paradoxes.
\end{remark}

\begin{remark}[Kolmogorov consistency and exchangeability]\label{rem:definetti}
Condition~\eqref{eq:priorsquare} over $\catO_{\mathrm{red}}$ says that
$(\pi_{\omega})_{\omega}$ is a compatible family over the diagram of the
reductions, i.e.\ a Kolmogorov consistent family, which is the input of an
extension problem where the designs play the role of the finite sets of
indices. The corresponding universal property is the one of the Kolmogorov
products of~\cite[Def.~3.1, Def.~4.1]{fritz_rischel_infinite_2020}, which
exist in $\BorelStoch$ by the classical extension
theorem~\cite[Ex.~3.6]{fritz_rischel_infinite_2020}. Coherence over
$\catO_{\mathrm{red}}$ gives no symmetry, since between two objects there is
at most one inclusion. Relabelling-coherence, i.e.\ coherence over
$\catO_{\mathrm{inj}}$, gives in addition, at the automorphisms of each
$\omega_n = \{1,\dots,n\}$, the invariance under the finite permutations which
the categorical de~Finetti
theorem~\cite[Thm.~4.4]{fritz_definetti_2021} assumes for a state on a
countable Kolmogorov power, in a Markov category which
satisfies~\cite[Assumption~4.2]{fritz_definetti_2021}, as $\BorelStoch$
does~\cite[Ex.~4.3]{fritz_definetti_2021}. Hence exchangeability is neither an
additional modelling assumption nor a consequence of the design reductions
alone. It comes from the relabellings, which are the morphisms that
distinguish the two classical criteria. This is only a structural observation.
We do not prove an extension theorem for the setting of McCullagh and Br\o ns,
and we have not checked the hypotheses
of~\cite{fritz_rischel_infinite_2020,fritz_definetti_2021} against
Definition~\ref{def:designdata}.
\end{remark}

\subsection*{What is computed}

The script \texttt{oneway\_\allowbreak prior\_\allowbreak coherence.py}
carries out the computations of Propositions~\ref{prop:priordim}
and~\ref{wprop:jeffreys} in exact arithmetic over $\mathbb{Q}$. It is
deposited with its committed output in the same release of
\url{https://github.com/Vaxx66/calamus} as the scripts of
Section~\ref{sec:hochschild}, and a fresh run reproduces the committed output
byte for byte. For each of the four classes of
Proposition~\ref{prop:priordim}, the script builds the linear system in the
unknown $(\pi_{\omega_1},\pi_{\omega_2})$ given by the
identities~\eqref{eq:priorsquare} along the generators of the class and by the
two normalisations, it computes the affine dimension of the set of the
solutions, and it compares it with the formulas for $|M| \le 4$ and
$|S| \le 3$. Then it evaluates the analogue of Jeffreys' rule and the product
rules on some cases, and it computes exactly the total variation defects along
$i_{1}$, $i_{2}$ and $p$, including the case where the bound $1-|M|^{-1}$ for
the merge is strict.
\section{Ridge regression as a bridged Bayesian object}\label{sec:ridge}

In this section we apply the constructions of Section~\ref{sec:bayes} to the
Gaussian linear model of Example~\ref{ex:linreg}, and we prove part~(3) of
Theorem~\ref{thm:main-priors}. We obtain the ridge estimator, together with
the correspondence between the penalty and the precision of the prior. We
shall see that the ridge prior is coherent in the sense of
Definition~\ref{def:prior}, and that its failure of naturality over the whole
category of the covariate spaces corresponds to the usual requirement that
ridge regression be applied to standardised covariates.

Throughout this section $\sigma^{2} = \sigma_{0}^{2}$ is fixed and known, and
the unknown parameter is the coefficient functional. All the objects below,
i.e.\ $\omega^{*} \cong \R^{p}$, $\R_{>0}$ and $\R^{u} \cong \R^{n}$, are
standard Borel. Hence the convention of Section~\ref{sec:ambient} applies, and
the Bayesian inversions exist and are unique up to a.s.\ equality.

\begin{definition}[Ridge datum]\label{def:ridge}
A \emph{ridge datum} on $\catO$ is a choice, for each covariate space
$\omega$, of an inner product $\langle\,,\rangle_{\omega}$ on $\omega$,
equivalently of a basis up to orthogonal transformations, together with a real
number $\tau^{2} > 0$ which does not depend on $\omega$. The datum is called
\emph{compatible} if the inner
products restrict along the inclusions of $\catO_{\mathrm{red}}$, i.e.\ if
$\langle\,,\rangle_{\omega'} =
\langle\,,\rangle_{\omega}\restriction_{\omega'}$ whenever
$\omega' \subseteq \omega$, so that every insertion is an isometric embedding.
Let $I_{\omega}$ be the form induced on $\omega^{*}$. The \emph{ridge prior}
associated to the datum is the product state
\[
\pi^{\tau}_{\omega}
\;\coloneqq\;
\Norm(0,\ \tau^{2} I_{\omega}) \otimes \delta_{\sigma_{0}^{2}}
\;\colon\; I \rightsquigarrow \omega^{*} \times \R_{>0} .
\]
\end{definition}

A Gaussian state on $\omega^{*}$ is not canonical, since it requires the choice
of a positive definite quadratic form, and this is the reason for the choice of
the inner products. Compatible data exist on the full subcategory of
$\Vect^{\mathrm{fd}}_{\R}$ whose objects are the linear subspaces of the
coordinate spaces $\R^{p}$, $p \ge 0$, which contains a representative of
every finite dimensional space and all the linear maps between its objects.
Indeed, take the standard form on each $\R^{p}$ and its restriction on each
subspace, which is the choice implicit in the description by matrices of
Example~\ref{ex:linreg}(5). In this section $\catO$ denotes this
subcategory. Compatibility is a condition on the datum, and it is used only
in Proposition~\ref{prop:ridgecoherence}(1).

\begin{proposition}[Ridge as Bayesian inversion]\label{prop:ridge}
Let $d = (u,\omega,x)$ be a design. Choose a basis of $\omega$ which is
orthonormal for the ridge datum and an enumeration of $u$, so that $x$ becomes
the $n \times p$ matrix $X$ of Example~\ref{ex:linreg}(5), and set
\[
\lambda \;\coloneqq\; \frac{\sigma_{0}^{2}}{\tau^{2}} .
\]
Let $\mathbf{P}_{d}$ be the component of the bridged model of the Gaussian
linear model (Theorem~\ref{thm:bridge-meas}).
\begin{enumerate}
\item The prior predictive state is
$m_{d} = \Norm_{n}(0,\ \tau^{2} X X^{\!\top} + \sigma_{0}^{2} I_{n})$.
\item The Bayesian inversion
$\mathbf{P}_{d}^{\dagger} \colon \R^{u} \rightsquigarrow
\omega^{*}\times\R_{>0}$ exists in $\BorelStoch$, and it is the Gaussian kernel
\[
\mathbf{P}_{d}^{\dagger}(y)
\;=\;
\Norm_{p}\Bigl(
(X^{\!\top}X + \lambda I)^{-1} X^{\!\top} y,
\;\;
\sigma_{0}^{2}(X^{\!\top}X + \lambda I)^{-1}
\Bigr)
\]
tensored with $\delta_{\sigma_{0}^{2}}$. We omit the factor
$\delta_{\sigma_{0}^{2}}$ here and in what follows.
\item The mean of the posterior is the ridge estimator of Hoerl and
Kennard~\cite[eq.~(2.1), p.~57]{hoerl_ridge_1970},
$\widehat{\beta}_{\lambda}(y) = (X^{\!\top}X + \lambda I)^{-1} X^{\!\top} y$,
and the penalty is the ratio of the two variances, i.e.\
$\lambda = \sigma_{0}^{2}/\tau^{2}$. This is the Bayesian reading which is
already given in~\cite[\S6, p.~64]{hoerl_ridge_1970}, where the ridge estimate
is the posterior mean under a centred normal prior with covariance
$(\delta^{2}/k)\,I$, where $\delta^{2}$ is not defined there and must be the
error variance $\sigma^{2}$ for the statement to hold, so that
$k = \sigma^{2}/\tau^{2}$.
\end{enumerate}
\end{proposition}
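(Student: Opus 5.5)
The plan is to reduce the statement to the classical Gaussian conjugacy computation, phrased through the defining equation~\eqref{eq:inversion} of a Bayesian inversion in $\BorelStoch$. Fix the orthonormal basis of $\omega$ and the enumeration of $u$. Then $\omega^{*}\cong\R^{p}$ with $I_{\omega}$ the identity, the prior on the coefficient is $\Norm_{p}(0,\tau^{2}I_{p})$, and $\mathbf{P}_{d}(\beta)=\Norm_{n}(X\beta,\sigma_{0}^{2}I_{n})$. The factor $\delta_{\sigma_{0}^{2}}$ is a deterministic point mass. It is carried through every composite unchanged, so it can be dropped on both sides of~\eqref{eq:inversion} and reattached at the end. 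All objects are standard Borel, so existence of $\mathbf{P}_{d}^{\dagger}$ is already guaranteed by Section~\ref{sec:ambient}. What remains is to identify it up to $m_{d}$-a.s.\ equality.

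\textbf{Part (1).} Compute the joint state $\nu=(\id\otimes\mathbf{P}_{d})\circ\mathrm{copy}\circ\pi^{\tau}_{\omega}$ explicitly. Write $Y=X\beta+\varepsilon$ with $\beta\sim\Norm_{p}(0,\tau^{2}I)$ independent of $\varepsilon\sim\Norm_{n}(0,\sigma_{0}^{2}I)$. Then $(\beta,Y)$ is jointly Gaussian with mean $0$ and covariance blocks
\[
\Sigma_{\beta\beta}=\tau^{2}I_{p},\qquad
\Sigma_{\beta Y}=\tau^{2}X^{\top},\qquad
\Sigma_{YY}=\tau^{2}XX^{\top}+\sigma_{0}^{2}I_{n}.
\]
The $Y$-marginal is $m_{d}$, which proves part~(1). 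The only point to justify is that the Chapman--Kolmogorov composite~\eqref{eq:CK} of a Gaussian state with an affine-Gaussian kernel is the law of $X\beta+\varepsilon$. This follows from characteristic functions.

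\textbf{Part (2).} Use the standard Gaussian conditioning formula, which gives a kernel $y\mapsto\Norm_{p}(Ky,\Sigma_{\mathrm{post}})$ with
\[
K=\Sigma_{\beta Y}\Sigma_{YY}^{-1},\qquad
\Sigma_{\mathrm{post}}=\Sigma_{\beta\beta}-\Sigma_{\beta Y}\Sigma_{YY}^{-1}\Sigma_{Y\beta}.
\]
This kernel satisfies the disintegration identity below~\eqref{eq:inversion}. The cleanest way to verify this is to note that $\beta-KY$ is uncorrelated with $Y$, hence independent of it by joint Gaussianity. Therefore the law of $(\beta,Y)$ equals the law of $(KY+Z,Y)$ with $Z\sim\Norm(0,\Sigma_{\mathrm{post}})$ independent of $Y$, which is exactly the right-hand side of~\eqref{eq:inversion}. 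Any other inversion agrees with this one $m_{d}$-a.s.\ by the uniqueness recalled in Section~\ref{sec:ambient}.

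It remains to rewrite $K$ and $\Sigma_{\mathrm{post}}$. The push-through identity
\[
X^{\top}(XX^{\top}+\lambda I_{n})^{-1}=(X^{\top}X+\lambda I_{p})^{-1}X^{\top}
\]
gives $K=(X^{\top}X+\lambda I)^{-1}X^{\top}$. The Woodbury identity gives
\[
\tau^{2}\bigl(I-X^{\top}(XX^{\top}+\lambda I)^{-1}X\bigr)=\tau^{2}\lambda(X^{\top}X+\lambda I)^{-1}=\sigma_{0}^{2}(X^{\top}X+\lambda I)^{-1}.
\]
Both inverses exist for every $X$, including rank-deficient ones, since $\lambda>0$.

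\textbf{Part (3).} This is a direct reading of the posterior mean from part~(2), together with the comparison of $\lambda$ with the $k$ of Hoerl and Kennard. I expect no real obstacle here. The only care needed is basis-dependence: the formulas use an orthonormal basis, so that $I_{\omega}$ becomes $I_{p}$. A change of orthonormal basis conjugates everything by an orthogonal matrix, so the kernel is well defined on $\omega^{*}$, and the identification of the posterior mean with $\widehat\beta_{\lambda}$ is then immediate.
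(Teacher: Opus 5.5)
Your proposal is correct and follows the same route as the paper: both reduce the statement to the classical Gaussian conjugacy, identify the joint state as a centred Gaussian with $\operatorname{Cov}(y)=\tau^{2}XX^{\!\top}+\sigma_{0}^{2}I_{n}$ and $\operatorname{Cov}(\beta,y)=\tau^{2}X^{\!\top}$, and take its Gaussian conditional as the Bayesian inversion. The differences are minor. The paper reads the posterior directly off the precision $\sigma_{0}^{-2}(X^{\!\top}X+\lambda I)$, whereas you use the covariance (Schur complement) form and convert it with the push-through and Woodbury identities. You also verify \eqref{eq:inversion} explicitly, through the independence of $\beta-KY$ and $Y$, where the paper appeals to the classical conditioning formula.
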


\begin{proof}
Let $\beta$ be the coordinate vector of the coefficient functional. Under
$\pi^{\tau}_{\omega}$ and $\mathbf{P}_{d}$ we have
$\beta \sim \Norm_{p}(0,\tau^{2}I)$ and
$y \mid \beta \sim \Norm_{n}(X\beta, \sigma_{0}^{2}I)$. Hence $(\beta,y)$ is
jointly Gaussian with
$\operatorname{Cov}(y) = \tau^{2}XX^{\!\top} + \sigma_{0}^{2}I$ and
$\operatorname{Cov}(\beta,y) = \tau^{2}X^{\!\top}$, and this gives~(1). The
precision of the posterior is
$\sigma_{0}^{-2}X^{\!\top}X + \tau^{-2}I
= \sigma_{0}^{-2}\bigl(X^{\!\top}X + \lambda I\bigr)$, which gives the stated
covariance, and the mean of the posterior is that covariance applied to
$\sigma_{0}^{-2}X^{\!\top}y$, i.e.\
$(X^{\!\top}X+\lambda I)^{-1}X^{\!\top}y$. Since $X^{\!\top}X + \lambda I$ is
positive definite for $\lambda>0$, these expressions are defined with no
condition on the rank of $X$. Finally, this Gaussian kernel is the Bayesian
inversion because
$(\mathbf{P}_{d}^{\dagger} \otimes \id) \circ \mathrm{copy} \circ m_{d}$ is the
joint state of Definition~\ref{def:prior}, which for jointly Gaussian pairs is
the classical formula for conditioning. This gives~(2), and~(3) follows.
\end{proof}

\begin{remark}\label{rem:ridge}
The content of Proposition~\ref{prop:ridge} is the classical conjugacy. The
categorical statements are that $\pi^{\tau}_{\omega}$ is a state, that
$\mathbf{P}_{d}$ is a morphism of $\Stoch$ (Theorem~\ref{thm:bridge-meas}), and
that the inversion exists in $\BorelStoch$. Thus, prior, likelihood, joint
distribution and posterior are four morphisms of the same category, and they
can be composed with the structure indexed by the designs of
Section~\ref{sec:bridge}.
\end{remark}

\begin{proposition}[Coherence of the ridge prior]\label{prop:ridgecoherence}
Let $\pi^{\tau}$ be as in Definition~\ref{def:ridge}, with $\catO$ as fixed
after that definition, and let
$\psi \colon \omega \to \omega'$ be a morphism of $\catO$, so that
$\Theta(\psi) = \psi^{*}\times\id$.
\begin{enumerate}
\item If $\psi$ is an isometric embedding for the chosen inner products, then
$\Theta(\psi)_{*}\,\pi^{\tau}_{\omega'} = \pi^{\tau}_{\omega}$. Hence, for a
compatible ridge datum, $\pi^{\tau}$ is coherent, i.e.\
$\catO_{\mathrm{red}}$-coherent in the sense of Definition~\ref{def:prior},
and by Proposition~\ref{prop:coherence}(3) the family of the prior predictives
$(m_{d})_{d}$ is natural over $\catO_{\mathrm{red}}$.
\item Let $A$ be a linear automorphism of $\omega$.
Then~\eqref{eq:priorsquare} holds at $A$ if and only if $A$ is orthogonal for
the chosen inner product. In particular it fails for every rescaling of the
coordinates $\mathrm{diag}(a_{1},\dots,a_{p})$ with some $|a_{j}| \neq 1$.
Hence $\pi^{\tau}$ is never $\catO_{\mathrm{inj}}$-coherent when some $\omega$
has $\dim\omega \ge 1$, whatever the datum is.
\item Let $\psi$ be surjective with $\dim\omega' < \dim\omega$, i.e.\ a merging
or a collapsing of covariate directions. Then $\psi^{*}$ has a proper image,
and by Proposition~\ref{prop:coherence}(1) coherence at $\psi$ would confine
$\pi^{\tau}_{\omega}$ to a proper subspace, which is not possible for a
nondegenerate Gaussian.
\item Suppose that $X$ has full column rank. Then
$\alpha \mapsto \mathbf P_{d}\circ(\alpha\otimes\delta_{\sigma_{0}^{2}})$ is
injective on the states $\alpha$ of $\omega^{*}$, i.e.\ the hypothesis of
Proposition~\ref{prop:coherence}(4) holds at $d$ on the class of the states
supported on $\omega^{*}\times\{\sigma_{0}^{2}\}$, which contains the ridge
priors and is preserved by the maps $\Theta(\psi)_{*}$. Hence, among the
priors with variance fixed at $\sigma_{0}^{2}$, the naturality of the prior
predictives over $\catO_{\mathrm{red}}$ is equivalent to the coherence of the
prior.
\end{enumerate}
\end{proposition}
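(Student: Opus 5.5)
The proof reduces every part to one fact about Gaussian pushforwards. For a linear map $L \colon V \to W$ and a centred Gaussian $\Norm(0,\Sigma)$ on $V$, we have $L_{*}\Norm(0,\Sigma) = \Norm(0, L\Sigma L^{\top})$, read invariantly as the pushforward of the covariance form. Since $\Theta(\psi) = \psi^{*}\times\id$ and the second factor of $\pi^{\tau}_{\omega'}$ is $\delta_{\sigma_0^2}$, we get
\[
\Theta(\psi)_{*}\pi^{\tau}_{\omega'} = \Norm\bigl(0,\tau^{2}\,\psi^{*}I_{\omega'}(\psi^{*})^{\top}\bigr)\otimes\delta_{\sigma_0^2}.
\]
The dual of $\psi^{*}$ is $\psi$ itself. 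Hence the covariance on $\omega^{*}$, viewed as a form on $\omega^{**}=\omega$, is $\langle \psi a,\psi b\rangle_{\omega'}$. Coherence at $\psi$ is therefore, by uniqueness of the Gaussian with given mean and covariance, the equality $\langle\psi a,\psi b\rangle_{\omega'} = \langle a,b\rangle_{\omega}$ for all $a,b\in\omega$. In words, $\psi$ is an isometric embedding.

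For part (1) this criterion gives the claim at once. Compatibility makes every inclusion of $\catO_{\mathrm{red}}$ isometric, so \eqref{eq:priorsquare} holds along $\catO_{\mathrm{red}}$. Naturality of the predictives then follows from Proposition~\ref{prop:coherence}(3).

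For part (2), take $\psi = A$ an automorphism. The criterion reads $A^{\top}A = I$ in an orthonormal basis, which is orthogonality. For $\mathrm{diag}(a_j)$ with some $|a_j|\neq1$, the $j$-th diagonal entry of $A^{\top}A$ is $a_j^{2}\neq1$. The last claim needs some care, because $\catO_{\mathrm{inj}}$ contains all isomorphisms of $\catO$. If $\dim\omega\ge1$, the rescaling $2\cdot\id_\omega$ is a non-orthogonal automorphism for every inner product, so coherence fails whatever the datum.

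For part (3), let $\psi$ be surjective with $\dim\omega'<\dim\omega$. Then $\psi^{*}\colon\omega'^{*}\to\omega^{*}$ is injective, and its image is a proper linear subspace $V\subsetneq\omega^{*}$, which is closed and hence measurable. By Proposition~\ref{prop:coherence}(1), coherence at $\psi$ would force $\pi^{\tau}_\omega(V\times\R_{>0})=1$. A nondegenerate Gaussian on $\omega^{*}$ gives measure zero to every proper affine subspace, which is a contradiction.

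Part (4) is the step I expect to be the main obstacle. I would compute characteristic functions. For a state $\alpha$ on $\omega^{*}\cong\R^{p}$, the predictive $\mathbf P_d\circ(\alpha\otimes\delta_{\sigma_0^2})$ is the law of $X\beta+\varepsilon$, where $\beta\sim\alpha$ and $\varepsilon\sim\Norm_n(0,\sigma_0^{2}I)$ is independent of $\beta$. Its characteristic function is
\[
t\mapsto \hat\alpha(X^{\top}t)\,e^{-\sigma_0^{2}|t|^{2}/2}.
\]
The Gaussian factor never vanishes, so the predictive determines $\hat\alpha$ on the image of $X^{\top}$. Full column rank of $X$ makes $X^{\top}\colon\R^{n}\to\R^{p}$ surjective, so $\hat\alpha$ is determined everywhere and, by Lévy uniqueness, so is $\alpha$.

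The class of states supported on $\omega^{*}\times\{\sigma_0^{2}\}$ is preserved by $\Theta(\psi)_{*}$, because the second factor of $\Theta(\psi)$ is the identity. It contains the ridge priors by construction. Proposition~\ref{prop:coherence}(4) then yields the converse implication, and part (3) of that proposition gives the direct one. One point needs checking: the hypothesis of Proposition~\ref{prop:coherence}(4) is required at each design over $\omega$. I would read ``at $d$'' as assuming full column rank at the designs used, and state this assumption explicitly.
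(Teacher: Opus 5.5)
Your proposal is correct and follows the paper's proof: the Gaussian pushforward of the covariance for (1)--(2), Proposition~\ref{prop:coherence}(1) together with the null mass of proper subspaces for (3), and characteristic functions with the injectivity of $X$ for (4). Your single criterion, that coherence at any $\psi$ holds if and only if $\psi$ is an isometric embedding, is a coordinate-free packaging of the paper's adapted orthonormal bases in (1) and its $A^{\top}A$ computation in (2), and it also yields the converse of (1); your caveat on the designs in (4) is justified, and injectivity at a single full-rank design over each source $\omega$ already suffices, because the equality $\mathbf P_{d}\circ\Theta_\Meas(\psi)_{*}\pi_{\omega'} = \mathbf P_{d}\circ\pi_{\omega}$ coming from the naturality of $m$ holds at every design $d$ over $\omega$.
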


\begin{proof}
(1) Choose an orthonormal basis of $\omega$ for
$\langle\,,\rangle_{\omega}$ and extend it to an orthonormal basis of
$\omega'$, which is possible because $\psi$ is an isometric embedding. In the
dual coordinates $\psi^{*}$ is the coordinate projection
$\R^{p'} \to \R^{p}$, and the pushforward of $\Norm(0,\tau^{2}I_{p'})$ along a
coordinate projection is $\Norm(0,\tau^{2}I_{p})$. The factor
$\delta_{\sigma_{0}^{2}}$ is preserved by $\id$. For a compatible datum every
morphism of $\catO_{\mathrm{red}}$ is an isometric embedding.

(2) The map $\Theta(A)$ acts on the coordinate vectors as $A^{\!\top}$, hence
$\Theta(A)_{*}\Norm(0,\tau^{2}I) = \Norm(0,\tau^{2}A^{\!\top}A)$, which is
equal to $\Norm(0,\tau^{2}I)$ if and only if $A^{\!\top}A = I$.

(3) It follows from Proposition~\ref{prop:coherence}(1) applied to
$\operatorname{im}\psi^{*} = (\ker\psi)^{0}$, the annihilator of $\ker\psi$.

(4) Under $\alpha\otimes\delta_{\sigma_{0}^{2}}$ the prior predictive is the
location mixture
$\int \Norm_{n}(X\beta,\sigma_{0}^{2}I)\,\alpha(\mathrm{d}\beta)$, whose
characteristic function is
$t \mapsto e^{-\sigma_{0}^{2}|t|^{2}/2}\,\widehat{X_{*}\alpha}(t)$. It
determines $X_{*}\alpha$, hence $\alpha$, because $X$ is injective. The
restriction to this class cannot be dropped. For $n = p$ the states
$\Norm(0,c(X^{\!\top}X)^{-1})\otimes\delta_{\sigma_{0}^{2}-c}$, with
$0 < c < \sigma_{0}^{2}$, and $\delta_{0}\otimes\delta_{\sigma_{0}^{2}}$ have
the same prior predictive $\Norm(0,\sigma_{0}^{2}I)$.
\end{proof}

\begin{proof}[Proof of Theorem~\ref{thm:main-priors}]
Parts~(1) and~(2) are Proposition~\ref{prop:coherence}(1) and~(2). Part~(3)
follows from Proposition~\ref{prop:ridge} and
Proposition~\ref{prop:ridgecoherence}(1) and~(2).
\end{proof}

\begin{remark}\label{rem:standardise}
The ridge prior carries a choice of geometry on the covariates, and by
Proposition~\ref{prop:ridgecoherence}(2) this choice is not preserved by the
recodings. Thus, in terms of the classes of Section~\ref{sec:meaningful}, the
prior is coherent over the class of Tjur and not over the class of McCullagh.
This is the categorical form of two standard facts. The first one is the
instruction to apply ridge regression to standardised covariates, which is
already present in the original paper, where the model is formulated so that
$X^{\!\top}X$ is in correlation form, and ill-conditioning is measured by its
distance from the identity in that
form~\cite[pp.~55--56]{hoerl_ridge_1970}. The second one is that
$\widehat\beta_{\lambda}$, unlike the least squares estimator, is not
equivariant under the linear reparametrisations of the covariate space. Note
that the model $\mathbf{P}$ is natural over $\catO$
(Lemma~\ref{lem:linreg-model}), so that the failure of naturality comes from
the prior only.
\end{remark}

\appendix
\section{The category of statistical models and the arrows of
\texorpdfstring{$\Stoch$}{Stoch}}\label{sec:Bref}

In this appendix we compare the category $\StatMod$ of Br\o ns
(Definition~\ref{def:statmod}) with the category whose objects are the
morphisms of $\Stoch$ and whose morphisms are the commuting squares with
deterministic sides.

\subsection{Copowers and squares}

The monoidal unit $I$ of $\Stoch$ is the one-point measurable space, and
\[
\Stoch(I,S) \;=\; \Meas(1,\G S) \;=\; \G S \;=\; \Prb(S)
\]
naturally in $S$, i.e.\ $\Prb \cong \Stoch(I,-)$. The Dirac embedding $\del$
(Definition~\ref{def:dirac}) is the free functor of the Kleisli adjunction
$\Meas \rightleftarrows \Stoch$, hence it is a left adjoint, and it preserves
colimits. The coproducts in $\Meas$ are the disjoint unions, with the
disjoint-union $\sigma$-algebra. Hence for every set $A$ the discrete space
$(A,2^{A})$ is the copower of $I$ in $\Stoch$, i.e.
\[
\del\bigl(A,2^A \bigr) \;\cong\; \coprod\nolimits_{A} I \;=:\; A\cdot I .
\]
Therefore, for every set $A$ and every measurable space $S$,
\begin{equation}\label{eq:copower}
\Set\bigl(A,\Prb(S)\bigr)
\;\cong\;
\Set\bigl(A,\Stoch(I,S)\bigr)
\;\cong\;
\Stoch\bigl(A\cdot I,\;S\bigr),
\end{equation}
which is the instance of~\eqref{eq:kleisli} used in the proof of
Theorem~\ref{thm:bridge-finite}. For $p \colon A \to \Prb(S)$ we denote by
$\widehat{p} \colon A \cdot I \rightsquigarrow S$ the corresponding kernel
$\widehat{p}(a)(B) = p(a)(B)$, which is measurable in $a$ because $A \cdot I$ is
discrete. For a kernel $q \colon A \cdot I \rightsquigarrow S$ we denote by
$\check{q} \colon a \mapsto q(a)$ the corresponding map.

\begin{definition}\label{def:arrdet}
Let $q \colon X \rightsquigarrow Y$ and $q' \colon X' \rightsquigarrow Y'$ be
morphisms of $\Stoch$. A \emph{square} from $q$ to $q'$ is a pair $(a,b)$ of
deterministic morphisms $a \colon X \rightsquigarrow X'$ and
$b \colon Y \rightsquigarrow Y'$, called the \emph{legs} of the square, such
that $q' \circ a = b \circ q$. The squares compose componentwise. We denote by
$\Arrdet{\Stoch}$ the category whose objects are the morphisms of $\Stoch$ and
whose morphisms are the squares, and by $\Arrcop{\Stoch}$ its full subcategory
of the objects $q \colon X \rightsquigarrow Y$ such that $X$ is isomorphic in
$\Stoch$ to a copower $A \cdot I$.
\end{definition}

\begin{lemma}\label{lem:square-pointwise}
Let $f \colon X \to X'$ and $g \colon Y \to Y'$ be measurable maps. Then
$(\del f, \del g)$ is a square from $q$ to $q'$ if and only if
$q'\bigl(f(x)\bigr) = g_{*}\, q(x)$ in $\Prb(Y')$ for every $x \in X$.
\end{lemma}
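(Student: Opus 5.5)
We compute both sides of the square equation pointwise, using the formulas \eqref{eq:push} and \eqref{eq:pull} from the proof of Theorem~\ref{thm:bridge-finite}. Neither formula uses finiteness, so both apply to arbitrary kernels $q \colon X \rightsquigarrow Y$ and $q' \colon X' \rightsquigarrow Y'$.

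By \eqref{eq:pull}, the composite $q' \circ \del f$ is the kernel $x \mapsto q'(f(x))$. By \eqref{eq:push}, the composite $\del g \circ q$ is the kernel $x \mapsto g_{*}\,q(x)$, since $(\del g \circ q)(x)(B) = q(x)(g^{-1}B)$ for every $B \in \Sigma_{Y'}$.

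Two morphisms $X \rightsquigarrow Y'$ of $\Stoch$ are equal exactly when they agree as maps $X \to \G Y'$. Equivalently, they must assign the same probability measure to every point $x$, and this is pointwise equality in $\Prb(Y')$. Hence $q' \circ \del f = \del g \circ q$ holds if and only if $q'(f(x)) = g_{*}\,q(x)$ for all $x \in X$.

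It remains to check that $(\del f, \del g)$ qualifies as a square in the sense of Definition~\ref{def:arrdet}. Both legs are deterministic, because Dirac kernels lie in $\Stoch_{\det}$ by Definition~\ref{def:dirac}. So the only condition to verify is the commutation equation, which we have just reduced to the stated pointwise condition.

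No step is a real obstacle. The one point to watch is that equality of kernels is literal equality for every $x$, not equality almost everywhere. This holds because no state is involved, so we are comparing elements of $\Meas(X, \G Y')$ directly through the bijection \eqref{eq:kleisli}.
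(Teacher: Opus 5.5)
Your proof is correct and matches the paper's: it computes $q'\circ\del f$ via \eqref{eq:pull} and $\del g\circ q$ via \eqref{eq:push}, then compares the two kernels pointwise in $\Prb(Y')$. Your explicit check that the Dirac legs are deterministic, which Definition~\ref{def:arrdet} requires, is left implicit in the paper but is harmless to include.
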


\begin{proof}
We have $(q' \circ \del f)(x) = q'(f(x))$ by~\eqref{eq:pull} and
$(\del g \circ q)(x) = g_{*}\, q(x)$ by~\eqref{eq:push}.
\end{proof}

We shall use the following two conditions.
\begin{itemize}
\item[(a)] For a set $A$: every countably additive $\{0,1\}$-valued
probability measure on $(A,2^{A})$ is a Dirac measure.
\item[(b)] For a measurable space $Z$: the $\sigma$-algebra $\Sigma_{Z}$ is
countably generated and separates points. By Lemma~\ref{lem:dirac}(2) this
implies that for every $W$ the deterministic morphisms $W \rightsquigarrow Z$
are the Dirac kernels of the measurable maps.
\end{itemize}

\begin{lemma}[Ulam]\label{lem:app-ulam}
We call a countably additive $\{0,1\}$-valued probability measure on
$(A, 2^{A})$ which is not a Dirac measure a \emph{two-valued measure} on the
set $A$.
\begin{enumerate}
\item The two-valued measures on $A$ correspond bijectively to the countably
complete non-principal ultrafilters on
$A$~\cite[(10.3)--(10.4)]{jech_set_2003}.
\item If $A$ carries no two-valued measure, then neither does any set of
cardinality at most $|A|$, nor the power set
$2^{A}$~\cite[Bem.~2 and Satz~1]{ulam_masstheorie_1930}. In particular, no set
of cardinality at most that of the continuum carries one.
\item The set $A$ carries a two-valued measure if and only if there is a
measurable cardinal
$\kappa \le |A|$~\cite[Lemma~10.2 and Def.~10.3]{jech_set_2003}. The original
form of Ulam is that $A$ carries none when no weakly inaccessible cardinal is
$\le |A|$~\cite[Satz~(A)]{ulam_masstheorie_1930},
cf.~\cite[Thm.~10.1]{jech_set_2003}.
\end{enumerate}
\end{lemma}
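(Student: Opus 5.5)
The plan is to pass through the ultrafilter dictionary and then reduce everything to the standard facts on measurable cardinals, citing Jech and Ulam for the genuinely set-theoretic steps.

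For part~(1), to a $\{0,1\}$-valued probability measure $\mu$ on $(A,2^{A})$ I will associate $\mathcal{U}_{\mu} = \{B \subseteq A : \mu(B) = 1\}$. Finite additivity and $\mu(A)=1$ make $\mathcal{U}_{\mu}$ an ultrafilter: exactly one of $B$ and $A\setminus B$ has measure $1$. Countable additivity translates into closure of $\mathcal{U}_{\mu}$ under countable intersections. To see this, suppose $B_n \in \mathcal{U}_{\mu}$ and $\bigcap_n B_n \notin \mathcal{U}_{\mu}$. Then $A\setminus\bigcap_n B_n$ is a countable union of null sets and has measure $1$, which is a contradiction. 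Conversely, a countably complete ultrafilter $\mathcal{U}$ gives $\mu_{\mathcal{U}}(B) = [B \in \mathcal{U}]$. This is countably additive because, among countably many disjoint sets, at most one lies in $\mathcal{U}$, and their union lies in $\mathcal{U}$ exactly when one of them does, by countable completeness applied to the complements. Finally, $\mu$ is the Dirac measure $\delta_a$ exactly when $\{a\} \in \mathcal{U}_{\mu}$, that is, when the ultrafilter is principal.

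For part~(2), I first handle the cardinality comparison. If $|B| \le |A|$, I choose an injection $j \colon B \to A$. A two-valued measure $\nu$ on $B$ pushes forward to $j_{*}\nu$ on $A$. This measure is still $\{0,1\}$-valued and countably additive, and it is not Dirac: if $j_{*}\nu = \delta_{a}$, then $\nu(j^{-1}\{a\}) = 1$, and $j^{-1}\{a\}$ has at most one point, so $\nu$ would be Dirac. Taking $B$ countable and writing it as a countable union of null singletons shows that $\aleph_{0}$ carries no two-valued measure.

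The claim about $2^{A}$ requires the structure of the least cardinal $\kappa$ that carries a two-valued measure, and this is the main obstacle. I will argue minimality: the ultrafilter on $\kappa$ is $\kappa$-complete. Otherwise $\kappa$ splits into $\lambda < \kappa$ null pieces, and the induced map $\kappa \to \lambda$ pushes the measure to a two-valued measure on $\lambda$, contradicting minimality. Hence $\kappa$ is measurable, and by Jech, Lemma~10.2 and Thm.~10.1, it is strongly inaccessible.

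With this in hand, if $A$ carries none then $|A| < \kappa$ by the monotonicity above. Since $\kappa$ is a strong limit, $2^{|A|} < \kappa$, so $2^{A}$ carries none either. Applying this to $A = \mathbb{N}$ gives the statement for the continuum.

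Part~(3) follows from the same analysis. If $A$ carries a two-valued measure, the least such $\kappa$ satisfies $\kappa \le |A|$ and is measurable. Conversely, a measurable $\kappa \le |A|$ carries a $\kappa$-complete non-principal ultrafilter, which is in particular countably complete. By part~(1) this is a two-valued measure on $\kappa$, and monotonicity transports it to $A$.

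Ulam's original form follows because measurable cardinals are weakly inaccessible. I do not intend to reprove Ulam's matrix argument for inaccessibility; I will cite it from Jech, Thm.~10.1.
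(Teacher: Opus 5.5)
Your argument is correct, but the paper gives no proof of its own to match: each clause is deferred to Jech or Ulam. Your part~(1), the monotonicity half of~(2), and part~(3) reconstruct what those citations supply. In particular, your minimality argument, that the least $\kappa$ carrying a two-valued measure has a $\kappa$-complete ultrafilter, is what the paper obtains by citing Jech's Lemma~10.2 and Definition~10.3.

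The real divergence is the power-set clause of~(2). You derive it from the strong-limit property of the least measurable cardinal. It also has a direct proof that needs only a well-order of $A$. Let $\nu$ be a two-valued measure on $\{0,1\}^{A}$, and let $\epsilon_{a}$ be the value with $\nu\{f : f(a)=\epsilon_{a}\}=1$. Since $\nu$ is not Dirac, $\nu\{\epsilon\}=0$. Send each $f\neq\epsilon$ to the least $a$ with $f(a)\neq\epsilon_{a}$, and send $\epsilon$ anywhere. Every fibre of this map is null, so the pushforward is a two-valued measure on $A$.

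Your route buys a uniform treatment of~(2) and~(3) through one analysis of $\kappa$. It pays in two ways. First, it imports the fact that measurable cardinals are strong limits. Second, it presupposes that some set carries a two-valued measure. When none does, $\kappa$ does not exist and the clause is vacuous, and you should say so. The direct route needs neither.

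Also check your citation for strong inaccessibility. The paper uses Jech's Lemma~10.2 for measurability and Thm.~10.1 only for Ulam's \emph{weak}-inaccessibility form, so cite the strong-limit property separately. Alternatively, prove it: if $\lambda<\kappa$ and $2^{\lambda}\ge\kappa$, push the $\kappa$-complete ultrafilter into $\{0,1\}^{\lambda}$ and intersect the $\lambda$ coordinate sets of measure one, which leaves a singleton.
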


Thus condition~(a) says that no measurable cardinal is $\le |A|$. By
Lemma~\ref{lem:app-ulam}(2), it holds, with no set-theoretic hypothesis, when
$A$ has at most the cardinality of the continuum, which is the case for all the
parameter sets of this paper.

\subsection{The comparison functor}

\begin{proposition}\label{wprop:copower}
The bijection~\eqref{eq:copower} extends to a functor
\[
\Jc \colon \StatMod \longrightarrow \Arrdet{\Stoch},
\qquad
(\Theta_0, S, p) \longmapsto
\bigl(\widehat{p} \colon \Theta_0 \cdot I \rightsquigarrow S\bigr),
\qquad
(r,f) \longmapsto (\del r,\del f),
\]
with the following properties.
\begin{enumerate}
\item $\Jc$ is injective on the objects, and it gives a bijection between the
objects of $\StatMod$ and the arrows of $\Stoch$ whose domain is a copower
$A \cdot I$. Hence $\Jc$ is essentially surjective onto $\Arrcop{\Stoch}$.
\item Let $x = (\Theta_0,S,p)$ and $x' = (\Theta_0',S',p')$ be objects of
$\StatMod$. If $\Theta_0'$ satisfies~(a) and $S'$ satisfies~(b), then the map
$\StatMod(x,x') \to \Arrdet{\Stoch}(\Jc x,\Jc x')$ induced by $\Jc$ is a
bijection.
\item Let $\StatMod_{(a),(b)} \subseteq \StatMod$ be the full subcategory of
the objects $(\Theta_0,S,p)$ such that $\Theta_0$ satisfies~(a) and $S$
satisfies~(b). Then $\Jc$ restricts to an equivalence between
$\StatMod_{(a),(b)}$ and the full subcategory of $\Arrcop{\Stoch}$ of the
arrows $X \rightsquigarrow S$ such that $S$ satisfies~(b) and
$X \cong A\cdot I$ with $A$ satisfying~(a). Let $\StatMod_{\mathrm{fin}}$ be the
full subcategory of the objects with $\Theta_0$ finite and $S$ finite and
discrete. Then $\Jc$ gives an isomorphism of categories
$\StatMod_{\mathrm{fin}} \cong \Arrdet{\FinStoch}$.
\end{enumerate}
\end{proposition}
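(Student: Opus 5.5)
The plan has three parts: check that $\Jc$ is a functor, then prove properties (1)–(3) in order, with the key tools being Lemma~\ref{lem:square-pointwise} and Lemma~\ref{lem:dirac}(2).

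\emph{Functoriality.} For a morphism $(r,f)$ of $\StatMod$ we have $\Prb(f)\circ p = p'\circ r$. This says $p'(r(\theta)) = f_{*}p(\theta)$ for every $\theta$, which by Lemma~\ref{lem:square-pointwise} means that $(\del r,\del f)$ is a square from $\widehat p$ to $\widehat{p'}$. Here $r$ is measurable because $\Theta_0\cdot I$ is discrete. The legs are deterministic by Definition~\ref{def:dirac}. Preservation of identities and composition follows from the functoriality of $\del$.

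\emph{Part (1).} An object $(\Theta_0,S,p)$ is recovered from $\widehat p$. The domain is $\Theta_0$ with the discrete $\sigma$-algebra, the codomain is $S$, and $p=\check{\widehat p}$ by \eqref{eq:copower}. So $\Jc$ is injective on objects. Conversely, any arrow $q\colon A\cdot I\rightsquigarrow S$ is $\Jc(A,S,\check q)$. Essential surjectivity onto $\Arrcop{\Stoch}$ follows: if $X\cong A\cdot I$ in $\Stoch$, then precomposing $q$ with this isomorphism gives an isomorphic object of the arrow category. The isomorphism square has deterministic legs, since isomorphisms in a Markov category are deterministic (as in \cite{fritz_synthetic_2020}); I would cite this fact rather than prove it.

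\emph{Part (2).} Faithfulness: $\del$ is injective on maps into $\Theta_0'$ (discrete) and into $S'$ (separates points by (b)), by Lemma~\ref{lem:dirac}(1). Fullness is the main point. Let $(a,b)$ be a square with deterministic legs. By (b) and Lemma~\ref{lem:dirac}(2), $b=\del f$ for a measurable $f$. For $a\colon \Theta_0\cdot I\rightsquigarrow\Theta_0'\cdot I$, each $a(\theta)$ is a $\{0,1\}$-valued countably additive probability measure on $(\Theta_0',2^{\Theta_0'})$. By (a) it is a Dirac measure $\delta_{r(\theta)}$, and $r$ is measurable because the domain is discrete. Then Lemma~\ref{lem:square-pointwise} turns the square equation into $\Prb(f)\circ p=p'\circ r$, so $(r,f)$ is a morphism of $\StatMod$ mapping to $(a,b)$. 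I expect this step to be the crux: condition (b) is not available for $\Theta_0'$ when it is large, and this is exactly where Ulam's condition (a) replaces it.

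\emph{Part (3).} Combine (1) and (2): $\Jc$ restricted to $\StatMod_{(a),(b)}$ is fully faithful and essentially surjective onto the stated subcategory. Essential surjectivity requires transporting along $X\cong A\cdot I$, and I would first check that hypothesis (a) on $A$ is what is needed there. For the finite case, finite sets satisfy (a), and finite discrete spaces satisfy (b). Objects of $\Arrdet{\FinStoch}$ have finite discrete domains, which are literally copowers $A\cdot I$. So $\Jc$ is bijective on objects and fully faithful, hence an isomorphism of categories.
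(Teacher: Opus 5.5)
Your proposal follows the paper's proof step for step. Functoriality goes through Lemma~\ref{lem:square-pointwise}, and injectivity on objects comes from reading $\Theta_0$, $S$ and $p$ off $\widehat p$. In part~(2), fullness uses~(a) for the leg $a$ and~(b) for the leg $b$, and faithfulness uses Lemma~\ref{lem:dirac}(1). The finite case holds because every object of $\FinStoch$ is literally a copower.

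The one justification that does not hold as you state it is in the essential surjectivity of part~(1). It is not true that isomorphisms in a Markov category are deterministic. In the Markov category of quasi-stochastic matrices (real matrices with rows summing to $1$), the matrix with rows $(2,-1)$ and $(-1,2)$ is an isomorphism, with inverse one third of the matrix with rows $(2,1)$ and $(1,2)$, and neither matrix is deterministic. The fact is true in $\Stoch$, for example because $\Stoch$ is positive in the sense of~\cite{fritz_synthetic_2020} and positivity forces isomorphisms to be deterministic. You should cite it in that form, or prove it.

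The paper proves exactly what is needed by hand. Let $v \colon A\cdot I \rightsquigarrow X$ and $u$ be mutually inverse, and put $X_a = \{x : u(x)(\{a\}) = 1\}$. Then $u\circ v = \id$ gives $v(a)(X_a) = 1$, so $X_a \neq \emptyset$ and $u(x) = \delta_a$ on $X_a$. Next, $v\circ u = \id_X$ gives $v(a) = \delta_x$ for every $x \in X_a$. Hence $v = \del f$, and $u$ is deterministic as the inverse of a deterministic isomorphism~\cite[Lem.~10.9(c)]{fritz_synthetic_2020}. With this repair the argument is complete.

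On your hesitation in part~(3): hypothesis~(a) on $A$ is used only to place the preimage $\bigl(A,S,(q\circ v)^{\vee}\bigr)$ in $\StatMod_{(a),(b)}$. The isomorphism $q \cong \Jc\bigl(A,S,(q\circ v)^{\vee}\bigr)$ itself comes from part~(1) with no hypothesis.
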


\begin{proof}
Let $(r,f) \colon (\Theta_0,S,p) \to (\Theta_0',S',p')$ be a morphism of
$\StatMod$, i.e.\ $\Prb(f) \circ p = p' \circ r$
(Definition~\ref{def:statmod}). The map $r$ is measurable because its domain is
discrete, hence $\del r$ and $\del f$ are Dirac kernels, and they are
deterministic. By Lemma~\ref{lem:square-pointwise} the pair
$(\del r,\del f)$ is a square from $\widehat p$ to $\widehat{p'}$ if and only
if $\widehat{p'}(r(\theta)) = f_{*}\,\widehat p(\theta)$ for every $\theta$,
i.e.\ if and only if $p' \circ r = \Prb(f) \circ p$. Since $\del$ is a functor
and the squares compose componentwise, $\Jc$ is a functor.

(1) The assignment $(\Theta_0,S,p) \mapsto \widehat p$ is a bijection from the
objects of $\StatMod$ onto the arrows of $\Stoch$ whose domain is of the form
$A \cdot I = (A,2^{A})$, with inverse $q \mapsto (A,S,\check q)$. Indeed, the
domain of $\widehat p$ gives $\Theta_0$ as its underlying set, the codomain
gives $S$, and $p = \check{\widehat p}$. Hence $\Jc$ is injective on the
objects and its image is contained in $\Arrcop{\Stoch}$. Let now
$q \colon X \rightsquigarrow S$ be an object of $\Arrcop{\Stoch}$, and let
$v \colon A \cdot I \rightsquigarrow X$ and
$u \colon X \rightsquigarrow A \cdot I$ be mutually inverse. We show that they
are deterministic. For $a \in A$ let
$X_a \coloneqq \{x \in X : u(x)(\{a\}) = 1\}$, which is measurable. Since
$u \circ v = \id_{A \cdot I}$ we have
\[
1 \;=\; \delta_a(\{a\}) \;=\; (u \circ v)(a)(\{a\})
\;=\; \int_X u(x)(\{a\})\, v(a)(\mathrm{d}x),
\]
and the integrand is bounded by $1$, hence $v(a)(X_a) = 1$. In particular
$X_a \neq \emptyset$. For $x \in X_a$ the probability measure $u(x)$ on $2^{A}$
gives mass $1$ to $\{a\}$, i.e.\ $u(x) = \delta_a$. Since $v \circ u = \id_X$
we obtain
\[
\delta_x(F) \;=\; (v \circ u)(x)(F)
\;=\; \int_A v(b)(F)\, u(x)(\mathrm{d}b) \;=\; v(a)(F)
\qquad (F \in \Sigma_X).
\]
Thus $v(a) = \delta_x$ for every $x \in X_a$. Choosing $x_a \in X_a$ for each
$a$ we obtain a map $f \colon A \to X$, which is measurable because its domain
is discrete, such that $v = \del f$. Hence $v$ is deterministic, and $u$ is
deterministic by~\cite[Lem.~10.9(c)]{fritz_synthetic_2020}. Now
$q \circ v \colon A \cdot I \rightsquigarrow S$ has domain $A \cdot I$, so that
$q \circ v = \Jc\bigl(A, S, (q \circ v)^{\vee}\bigr)$. Furthermore
$(v, \id_S) \colon q \circ v \to q$ and $(u, \id_S) \colon q \to q \circ v$ are
squares, and they are mutually inverse in $\Arrdet{\Stoch}$. Hence
$q \cong \Jc\bigl(A,S,(q \circ v)^{\vee}\bigr)$.

(2) Let $(a,b)$ be a square from $\widehat p$ to $\widehat{p'}$. The leg
$a \colon \Theta_0 \cdot I \rightsquigarrow \Theta_0' \cdot I$ is
deterministic, hence each $a(\theta)$ is a countably additive
$\{0,1\}$-valued probability measure on
$2^{\Theta_0'}$~\cite[Ex.~10.4]{fritz_synthetic_2020}. By~(a) it is
$\delta_{r(\theta)}$ for a unique $r(\theta) \in \Theta_0'$, and $r$ is
measurable because $\Theta_0$ is discrete, so that $a = \del r$. By~(b) for
$S'$ we have $b = \del f$ for a unique measurable map $f \colon S \to S'$. By
Lemma~\ref{lem:square-pointwise} we have $p' \circ r = \Prb(f) \circ p$, i.e.\
$(r,f)$ is a morphism of $\StatMod$ with $\Jc(r,f) = (a,b)$. Hence the map
$\StatMod(x,x') \to \Arrdet{\Stoch}(\Jc x, \Jc x')$ is surjective. It is
injective by Lemma~\ref{lem:dirac}(1), since $2^{\Theta_0'}$ separates points
and $\Sigma_{S'}$ does by~(b).

(3) Let $\mathcal A$ be the full subcategory of $\Arrcop{\Stoch}$ of the
statement. The functor $\Jc$ restricts to a functor
$\StatMod_{(a),(b)} \to \mathcal A$, which is fully faithful by part~(2). It is
essentially surjective by the argument of part~(1). Indeed, for
$q \colon X \rightsquigarrow S$ in $\mathcal A$ with
$v \colon A \cdot I \rightsquigarrow X$ an isomorphism and $A$
satisfying~(a), the object $\bigl(A,S,(q \circ v)^{\vee}\bigr)$ belongs to
$\StatMod_{(a),(b)}$ and
$q \cong \Jc\bigl(A,S,(q \circ v)^{\vee}\bigr)$. Hence it is an
equivalence~\cite[IV.4, Thm.~1(iii)]{maclane_categories_1998}. Finally every
object of $\FinStoch$ is $A \cdot I$ for a finite set $A$, so that by
part~(1) $\Jc$ gives a bijection from the objects of $\StatMod_{\mathrm{fin}}$
onto the morphisms of $\FinStoch$, i.e.\ onto the objects of
$\Arrdet{\FinStoch}$. Conditions~(a) and~(b) hold for every finite set and
every finite discrete space, hence $\Jc$ is bijective on the hom-sets by
part~(2), and a functor which is bijective on the objects and on the hom-sets
is an isomorphism of categories.
\end{proof}

\subsection{The measurable analogue}

The restriction to the arrows with copower domain comes from the
$\Set$-valued formulation. Consider the comma category
\[
\Statk \;\coloneqq\; (\id_{\Meas}\downarrow\G).
\]
Its objects are the measurable maps $\Theta \to \G S$, i.e.\ by~\eqref{eq:kleisli} the morphisms
$\Theta \rightsquigarrow S$ of $\Stoch$, with no condition on the domain. We
denote by $D \colon \Set \to \Meas$ the functor which endows a set with the
$\sigma$-algebra of all its subsets and by $U \colon \Meas \to \Set$ the
underlying-set functor.

\begin{proposition}\label{prop:coreflection}
We have $D \dashv U$ and $U\G = \Prb$. Let
$\varepsilon \colon DU \Rightarrow \id_{\Meas}$ be the counit, whose component
$\varepsilon_X$ is the identity map of $UX$, and let
$\lambda \coloneqq \varepsilon\G \colon D\Prb \Rightarrow \G$.
\begin{enumerate}
\item The assignments
$(\Theta_0, S, p) \mapsto (D\Theta_0,\, S,\, \lambda_S \circ Dp)$ and
$(r,f) \mapsto (Dr, f)$ give a functor $\Phi \colon \StatMod \to \Statk$.
\item The assignments $(\Theta, S, q) \mapsto (U\Theta,\, S,\, Uq)$ and
$(a,f) \mapsto (Ua, f)$ give a functor $\Psi \colon \Statk \to \StatMod$, and
$\Phi \dashv \Psi$.
\item We have $\Psi\Phi = \id_{\StatMod}$. Hence $\Phi$ is fully faithful and
$\StatMod$ is a coreflective subcategory of $\Statk$ with coreflector $\Psi$.
\item The counit of $\Phi \dashv \Psi$ at $(\Theta, S, q)$ is
$(\varepsilon_{\Theta}, \id_S)$, which is invertible if and only if
$\Sigma_{\Theta} = 2^{\Theta}$. The essential image of $\Phi$ is the full
subcategory of $\Statk$ of the objects whose parameter space is discrete.
\end{enumerate}
No hypothesis on the spaces is needed.
\end{proposition}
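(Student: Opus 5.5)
We first check $D \dashv U$ and $U\G = \Prb$. A map $DA \to X$ is measurable for every $\sigma$-algebra on $X$, because $DA$ is discrete. Hence $\Meas(DA,X) = \Set(A,UX)$, naturally in $A$ and $X$. The unit is the identity of $A = UDA$, and the counit $\varepsilon_X \colon DUX \to X$ is the identity map of the underlying set. The equality $U\G = \Prb$ holds by the definition of $\G$ in Section~\ref{sec:markov}. Then $\lambda = \varepsilon\G$ is natural, being a whiskering of the counit, and its component $\lambda_S \colon D\Prb(S) \to \G S$ is the identity map.

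\emph{Part~(1).} A morphism $(r,f)$ of $\StatMod$ satisfies $\Prb(f)\,p = p'\,r$. We must show that $(Dr,f)$ is a morphism of $\Statk$, i.e.\ that $\G f\circ\lambda_S\circ Dp = \lambda_{S'}\circ Dp'\circ Dr$. The left side equals $\lambda_{S'}\circ D\Prb(f)\circ Dp$ by naturality of $\lambda$. Applying $D$ to the identity $\Prb(f)\,p = p'\,r$ then gives the right side. Functoriality follows from that of $D$.

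\emph{Part~(2).} For $\Psi$, apply $U$ to $\G f\circ q = q'\circ a$ and use $U\G = \Prb$. For the adjunction, we show that a morphism $\Phi x \to y$ is the same as a morphism $x \to \Psi y$. Take $x = (\Theta_0,S,p)$ and $y = (\Theta,S',q)$. A morphism $\Phi x\to y$ is a pair $(a,f)$ with $a \colon D\Theta_0\to\Theta$ measurable and $\G f\circ\lambda_S Dp = q\circ a$. Transposing $a$ gives $r = Ua \colon \Theta_0\to U\Theta$, and this is a bijection by $D\dashv U$. Applying $U$, the condition becomes $\Prb(f)\,p = Uq\circ r$; here $U\lambda_S$ and $U$ of the counit are identities. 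Conversely, $U$ is faithful, so this equation of underlying maps recovers the equation in $\Meas$. So $(a,f)\mapsto(Ua,f)$ is a bijection. Naturality in $x$ and $y$ is inherited from the naturality of the adjunction $D\dashv U$ in each variable.

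\emph{Part~(3).} We have $UD = \id_{\Set}$ and $U\lambda_S\circ UDp = p$, so $\Psi\Phi = \id$ on the nose, and the unit of $\Phi\dashv\Psi$ is the identity. A left adjoint whose unit is invertible is fully faithful \cite[IV.3, Thm.~1]{maclane_categories_1998}. Hence $\Phi$ embeds $\StatMod$ as a coreflective subcategory with coreflector $\Psi$.

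\emph{Part~(4).} The counit is the transpose of the identity of $\Psi y$, which is $(\varepsilon_\Theta,\id_S)$. This pair is invertible in the comma category if and only if $\varepsilon_\Theta$ is invertible in $\Meas$. Since $\varepsilon_\Theta$ is the identity map, this happens if and only if its inverse is measurable, i.e.\ if and only if $\Sigma_\Theta = 2^\Theta$.

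For the essential image, note first that objects in the image of $\Phi$ have discrete parameter space, and discreteness is invariant under isomorphism. Conversely, if $\Theta$ is discrete then the counit is an isomorphism $\Phi\Psi y\cong y$.

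The only delicate points are bookkeeping with the identity-on-sets maps $\lambda_S$ and $\varepsilon$. In particular, the counit must be invertible in the comma category and not just in $\Meas$. This holds because its second leg is $\id_S$.
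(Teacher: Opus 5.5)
Your proposal is correct and follows essentially the same route as the paper. It uses the naturality of $\lambda$ for part~(1), and for the adjunction it transposes along $D \dashv U$ and uses the faithfulness of $U$ on maps out of a discrete space. It then gets $\Psi\Phi = \id$ for part~(3) and the identity-on-sets counit $(\varepsilon_\Theta,\id_S)$ for part~(4); your identification of the counit as the transpose of $\id_{\Psi y}$, and your check that invertibility in the comma category is invertibility of both legs, only make explicit what the paper leaves to the triangle identities.
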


\begin{proof}
We have $\Meas(DA, X) = \Set(A, UX)$ naturally, because every map from a
discrete space is measurable, hence $D \dashv U$. The underlying set of $\G S$
is $\Prb(S)$ and the underlying map of $\G(f)$ is the pushforward, hence
$U\G = \Prb$. The transformation $\lambda$ is natural, and each $\lambda_S$ is
the identity on the underlying sets.

(1) The map $\lambda_S \circ Dp$ is measurable, hence it is an object of
$\Statk$. Let $(r,f)$ be a morphism of $\StatMod$, i.e.\
$\Prb(f)\circ p = p' \circ r$. By the naturality of $\lambda$ we have
\[
\G(f) \circ \lambda_S \circ Dp
= \lambda_{S'} \circ D\bigl(\Prb(f)\bigr) \circ Dp
= \lambda_{S'} \circ D\bigl(p' \circ r\bigr)
= \bigl(\lambda_{S'} \circ Dp'\bigr) \circ Dr ,
\]
i.e.\ $(Dr,f)$ is a morphism of $\Statk$. Identities and compositions are
preserved componentwise.

(2) The functor $\Psi$ is well defined since
$Uq \colon U\Theta \to U\G S = \Prb(S)$. A morphism
$\Phi(\Theta_0,S,p) \to (\Theta',T,q)$ is a pair $(a,f)$, where
$a \colon D\Theta_0 \to \Theta'$ is measurable and
$q \circ a = \G(f) \circ \lambda_S \circ Dp$. Let
$r = Ua \colon \Theta_0 \to U\Theta'$ be the map which corresponds to $a$ under
$D \dashv U$. Both sides of the equality are measurable maps from the discrete
space $D\Theta_0$ and $U$ is faithful, hence the equality is equivalent to
$Uq \circ r = \Prb(f) \circ p$, i.e.\ to the fact that $(r,f)$ is a morphism
$(\Theta_0,S,p) \to \Psi(\Theta',T,q)$ of $\StatMod$. This correspondence is
bijective and natural in both variables.

(3) We have $UD = \id_{\Set}$ and $U\lambda_S = \id$, hence $\Psi\Phi$ is the
identity on the objects and on the morphisms. A left adjoint whose unit is
invertible is fully faithful, and its domain is then coreflective in its
codomain.

(4) We have
$\Phi\Psi(\Theta,S,q) = (DU\Theta,\, S,\, \lambda_S \circ D(Uq))$, and
$(\varepsilon_\Theta, \id_S)$ is a morphism from it to $(\Theta,S,q)$, since
both sides of the required equality are measurable maps $DU\Theta \to \G S$
with underlying map $Uq$. The triangle identities show it is the counit. The
bijection $\varepsilon_\Theta$ is an isomorphism of $\Meas$ if and only if
$\Sigma_\Theta = 2^{\Theta}$, and $X \cong DA$ in $\Meas$ implies
$\Sigma_X = 2^X$. This gives the essential image.
\end{proof}

\begin{remark}\label{rem:Jcm}
A morphism $(a,f)$ of $\Statk$ is a pair of measurable maps, and
$(a,f) \mapsto (\del a, \del f)$ gives a functor
$\Jcm \colon \Statk \to \Arrdet{\Stoch}$ by
Lemma~\ref{lem:square-pointwise}. It is bijective on the objects
by~\eqref{eq:kleisli}. By Lemma~\ref{lem:dirac}(1) and~(2) it is bijective on
the morphisms into an object $(\Theta',S',q')$ when $\Theta'$ and $S'$
satisfy~(b). Hence $\Jcm$ restricts to an equivalence between the full
subcategory of $\Statk$ of the objects whose parameter space and sample space
satisfy~(b) and the full subcategory of $\Arrdet{\Stoch}$ of the arrows between
such spaces. We have $\Jc \cong \Jcm\Phi$, where the isomorphism is the
identification $D\Theta_0 \cong \Theta_0 \cdot I$ in $\Stoch$. The functor
$\Phi$ needs no hypothesis, while conditions~(a) and~(b) are used only for
$\Jcm$ and $\Jc$. We do not know whether they are necessary. Note that there is
no equivalence with the full arrow category $\Stoch^{\rightarrow}$, whose
squares have arbitrary kernels as legs, because the squares which come from
pairs of measurable maps are deterministic.
\end{remark}

\begin{corollary}\label{cor:coreflection-diagrams}
Let $\cat{C}$ be a small category. The composition with $\Phi$ and $\Psi$ gives
functors $\Phi_{*}$ and $\Psi_{*}$ between the categories of diagrams
$[\cat{C}, \StatMod]$ and $[\cat{C}, \Statk]$ such that
$\Phi_{*} \dashv \Psi_{*}$ and $\Psi_{*}\Phi_{*} = \id$. For
$\cat{C} = \catV \times \catD^{\op}$, the bijection of
Proposition~\ref{prop:brons} gives a bijection between the McCullagh--Br\o ns
models and the functors $\catV \times \catD^{\op} \to \Statk$ whose two
projections are $D \circ L$ and
$\Gamma \circ (\id_{\catV} \times \pr_\catU^{\op})$. A diagram in $\Statk$
belongs to the essential image of $\Phi_{*}$ if and only if its parameter
spaces are discrete.
\end{corollary}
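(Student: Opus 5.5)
The corollary follows from the coreflection of Proposition~\ref{prop:coreflection} and the comma-category argument of Proposition~\ref{prop:brons}; the only input needed is that postcomposition with an adjunction gives an adjunction between functor categories.

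\emph{Adjunction on diagrams.} Given $\Phi \dashv \Psi$ with unit $\eta$ and counit $\epsilon$, I set $\Phi_{*}F = \Phi \circ F$ and $\Psi_{*}G = \Psi \circ G$. The whiskered transformations $\eta F$ and $\epsilon G$ serve as unit and counit. The triangle identities hold componentwise, because they hold at every object of $\cat{C}$. Alternatively, a natural transformation $\Phi F \Rightarrow G$ consists of components $\Phi F c \to G c$, and these correspond bijectively to components $F c \to \Psi G c$. Naturality is preserved under this correspondence because the hom-bijection of $\Phi \dashv \Psi$ is natural in both variables. The equality $\Psi_{*}\Phi_{*} = \id$ is immediate from $\Psi\Phi = \id_{\StatMod}$ in Proposition~\ref{prop:coreflection}(3).

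\emph{Bijection with models.} I would apply the general comma-category statement from the proof of Proposition~\ref{prop:brons}, this time with $\cat{E} = \Meas$, $\cat{B} = \Meas$, $F = \G$, $G = D \circ L$ and $H = \Gamma \circ (\id_{\catV} \times \pr_{\catU}^{\op})$. This gives a bijection between the functors into $\Statk$ with the stated projections and the natural transformations $D L \Rightarrow \G H$ in $\Meas$. By $D \dashv U$ and $U\G = \Prb$, a map $D L(a) \to \G H(a)$ is the same thing as a map $L(a) \to \Prb H(a)$, since every map out of a discrete space is measurable. Naturality transfers because $U$ is faithful and $U$ applied to both sides of each square gives the corresponding square in $\Set$, exactly as in part~(2) of Proposition~\ref{prop:coreflection}. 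These are precisely the models $L \Rightarrow R$. Equivalently, this bijection is the composite of $\widehat{P} \mapsto \Phi \circ \widehat{P}$ with the bijection of Proposition~\ref{prop:brons}. Its inverse is $\Psi_{*}$, and the projections of $\Phi \circ \widehat{P}$ are $D L$ and $H$ by the definition of $\Phi$.

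\emph{Essential image.} This is the step needing the most care, because essential image is a statement about isomorphism in the diagram category, not objectwise. If $G \cong \Phi F$, then each $G(c)$ is isomorphic to an object with discrete parameter space, so its own parameter space is discrete; an isomorphism in $\Meas$ preserves $\Sigma_X = 2^X$. Conversely, suppose every parameter space of $G$ is discrete. Then by Proposition~\ref{prop:coreflection}(4) each counit component $(\varepsilon_{\Theta}, \id_S)$ is invertible. Hence the whiskered counit $\Phi_{*}\Psi_{*}G \Rightarrow G$ is a natural transformation with invertible components, and is therefore a natural isomorphism. So $G \cong \Phi_{*}(\Psi_{*}G)$. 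Naturality of the counit, which the whiskering supplies automatically, is the point that removes the objectwise-versus-diagram worry.
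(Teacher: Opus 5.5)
Your proof is correct and follows the paper's argument: you build $\Phi_{*} \dashv \Psi_{*}$ with pointwise unit and counit, you get $\Psi_{*}\Phi_{*} = (\Psi\Phi)_{*} = \id$, and you identify the essential image through the pointwise invertibility of the counit (Proposition~\ref{prop:coreflection}(4)) together with the invariance of discreteness under isomorphisms of $\Meas$. The only variation is in the middle step: you re-apply the comma-category lemma directly to $\Statk$ and transpose across $D \dashv U$, whereas the paper notes that $F = \Phi_{*}\Psi_{*}F$ for every functor with parameter projection $D \circ L$ and then composes with the bijection of Proposition~\ref{prop:brons}; your closing remark identifies the two bijections, so nothing is lost.
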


\begin{proof}
The composition with an adjunction is an adjunction between the categories of
diagrams, with unit and counit computed pointwise, and
$\Psi_{*}\Phi_{*} = (\Psi\Phi)_{*} = \id$. A functor
$F \colon \catV \times \catD^{\op} \to \Statk$ with the stated projections has
parameter spaces of the form $D\Theta_0$, hence $F = \Phi_{*}\Psi_{*}F$, since
both sides have the same underlying maps and every map from a discrete space
is measurable. Therefore $F \mapsto \Psi_{*}F$ and $G \mapsto \Phi_{*}G$ are
mutually inverse bijections between these functors and the functors
$\catV \times \catD^{\op} \to \StatMod$ of Proposition~\ref{prop:brons}(2). The
last statement follows from Proposition~\ref{prop:coreflection}(4), since the
counit is computed pointwise and discreteness is invariant under the
isomorphisms of $\Meas$.
\end{proof}

\bibliographystyle{plain}
\bibliography{calamus_arkiv_draft_light_v03}

\end{document}